\documentclass[11pt,a4paper]{article}
\pdfoutput=1

\usepackage[latin9]{inputenc}
\usepackage{amsmath}
\usepackage{amsfonts}
\usepackage{amssymb}
\usepackage{amsthm}
\usepackage{color}
\usepackage[english]{babel}

\usepackage{graphicx}

\setlength{\textwidth}{16 cm}
\setlength{\oddsidemargin}{0 cm} \setlength{\topmargin}{0 cm}
\setlength{\evensidemargin}{0 cm} \setlength{\headsep}{0 cm}
\setlength{\marginparwidth}{0 cm} \setlength{\textheight}{24 cm}
\setlength{\footskip}{15pt}

\theoremstyle{plain}
\newtheorem{thm}{Theorem}[section]
\newtheorem{prop}[thm]{Proposition}
\newtheorem{lem}[thm]{Lemma}
\newtheorem{cor}[thm]{Corollary}

\theoremstyle{definition}

\newcommand{\ind}[1]{\mathbf{1}_{\lbrace #1 \rbrace}}
\newcommand{\N}{\mathbb{N}}
\newcommand{\Z}{\mathbb{Z}}
\newcommand{\R}{\mathbb{R}}

\renewcommand{\(}{\left(}
\renewcommand{\)}{\right)}

\renewcommand{\{}{\left\lbrace}
\renewcommand{\}}{\right\rbrace}

\newcommand{\segs}[2]{[ \! [ #1 , #2 ] \! ]}
\newcommand{\abs}[1]{\left|#1\right|}

\newcommand{\ent}[1]{\lfloor #1 \rfloor}

\newcommand{\card}[1]{\# #1}

\newcommand{\cvg}[2]{\xrightarrow[#1]{#2}}

\newcommand{\E}[2][]{\mathbb{E}\if\relax#1\relax[#2]\else\left[#2\right]\fi}
\newcommand{\Efrom}[3][]{\mathbb{E}_{#2}\if\relax#1\relax[#3]\else\left[#3\right]\fi}
\renewcommand{\P}[2][]{\mathbb{P}\if\relax#1\relax(#2)\else\left(#2\right)\fi}
\newcommand{\Pfrom}[3][]{\mathbb{P}_{#2}\if\relax#1\relax(#3)\else\left(#3\right)\fi}

\newcommand{\as}{\mbox{a.s.}}

\newcommand{\ul}[1]{\underline{#1}}
\newcommand{\ol}[1]{\overline{#1}}
\newcommand{\ora}[1]{\overrightarrow{#1}}
\newcommand{\ola}[1]{\overleftarrow{#1}}
\newcommand{\olra}[1]{\overleftrightarrow{#1}}
\newcommand{\dbtilde}[1]{\tilde{\raisebox{0cm}[0.9\height]{$\tilde{#1}$}}}

\newcommand{\ltrees}[1]{\mathbb{T}_{#1}}
\newcommand{\lforests}[2]{\mathbb{F}_{#1,#2}}
\newcommand{\pt}[1]{\mathrm{#1}}
\newcommand{\spine}{\mathfrak{s}} 
\newcommand{\strees}{\textbf{S}} 
\newcommand{\lstrees}{\mathbb{S}} 
\newcommand{\uhpq}{\ola{\textbf{Q}}}
\newcommand{\lhpq}{\ora{\textbf{Q}}}
\DeclareMathOperator{\spq}{Sp}
\DeclareMathOperator{\geom}{Geom}
\newcommand{\GWgeom}[1]{\rho_{(#1)}}
\newcommand{\MX}{M\!X}
\newcommand{\MY}{M\!Y}
\newcommand{\MZ}{M\!Z}
\newcommand{\SumFW}[2]{H_{#1}(#2)}
\newcommand{\SumFWStar}[2]{H_{#1}^{\ast}(#2)}
\newcommand{\SumGV}[2]{H'_{#1}(#2)}
\newcommand{\LB}{LB}
\newcommand{\RB}{RB}

\renewcommand{\epsilon}{\varepsilon}

\setlength{\unitlength}{1cm}

\title{The UIPQ seen from a point at infinity along its geodesic ray}
\date{\today}
\author{Daphné \textsc{Dieuleveut}}

\begin{document}

\maketitle

\textsc{Abstract:} 
We consider the uniform infinite quadrangulation of the plane (UIPQ). Curien, Ménard and Miermont recently established that in the UIPQ, all infinite geodesic rays originating from the root are essentially similar, in the sense that they have an infinite number of common vertices. In this work, we identify the limit quadrangulation obtained by rerooting the UIPQ at a point at infinity on one of these geodesics. 
More precisely, calling $v_k$ the $k$-th vertex on the ``leftmost'' geodesic ray originating from the root, and $Q_{\infty}^{(k)}$ the UIPQ re-rooted at $v_k$, we study the local limit of $Q_{\infty}^{(k)}$. To do this, we split the UIPQ along the geodesic ray $(v_k)_{k\geq 0}$. Using natural extensions of the Schaeffer correspondence with discrete trees, we study the quadrangulations obtained on each ``side'' of this geodesic ray. We finally show that the local limit of $Q_{\infty}^{(k)}$ is the quadrangulation obtained by gluing the limit quadrangulations back together.

\section{Introduction}

\label{S: Setting}

Finite and infinite planar maps are a popular model for random geometry. While finite maps have been studied since the sixties, infinite models were only introduced a decade ago, with the works of Angel and Schramm \cite{AngSchr,Ang_Peeling}. They were the first to define the uniform infinite planar triangulation, an infinite map which can be seen as the local limit (in distribution) of uniform finite triangulations. Krikun \cite{Kri} then studied its counterpart, the uniform infinite planar quadrangulation (UIPQ), defined as the limit of uniform rooted finite quadrangulations as the number of faces goes to infinity. In this article, we study what the UIPQ looks like seen from a point ``at infinity'' on a geodesic ray originating from the root.

One of the main advantages of quadrangulations over other classes of planar maps is the existence of the so-called Cori-Vauquelin-Schaeffer bijection. This bijection, introduced in \cite{CV} and developed thoroughly in \cite{Sch_Thesis,ChaSch}, gives a correspondence between finite quadrangulations and well-labeled finite trees. It was in particular used by Chassaing and Durhuus \cite{ChaDu} as a new approach to the UIPQ: they studied the infinite quadrangulation of the plane corresponding to an infinite positive labeled tree, and it was shown later by Ménard \cite{Men} that this quadrangulation has the same distribution as the one defined by Krikun.

Using another extension of the Cori-Vauquelin-Schaeffer bijection, Curien, Ménard and Miermont \cite{CuMenMi} recently showed that the UIPQ can also be obtained from a ``uniform'' infinite labeled tree, without the positivity constraint on the labels. This construction allowed them to prove new results on the UIPQ, and in particular to give a fine description of the geodesic arcs from a point to infinity. One of their main results states that all such geodesics are ``trapped'' between two distinguished geodesics, which have a simple description in terms of the corresponding labeled tree. Moreover, these two geodesics, called the maximal (or leftmost) and minimal (or rightmost) geodesics, are roughly similar, in the sense that they almost surely have an infinite number of common points.

Our main goal here is to study the local limit of $Q_{\infty}^{(k)}$ as $k \rightarrow \infty$, where $Q_{\infty}^{(k)}$ denotes the UIPQ re-rooted at a point at distance $k$ from the root, on the leftmost geodesic. Our methods are again based on bijective correspondences between trees and quadrangulations. Specifically, we show that $Q_{\infty}^{(k)}$ converges in distribution to a limit quadrangulation $\olra{Q}_{\infty}$, which can be obtained by gluing together two quadrangulations of the half-plane with geodesic boundaries; we give explicit expressions for the distribution of the corresponding trees. Note that the laws of the quadrangulations of the half-plane we consider (corresponding to the parts of the UIPQ which are ``on the left'' and ``on the right'' of the leftmost geodesic ray) are orthogonal to the law of the uniform infinite quadrangulation of the half-plane (UIHPQ) which was studied in \cite{Ang_UIHPT} and \cite{CuMi}.

Finally, note that the scaling limit of the uniform infinite quadrangulation, the Brownian plane, which was introduced and studied by Curien and Le Gall \cite{CuLG}, has a similar ``uniqueness'' property of infinite geodesic rays started from the root. We expect our result to have a natural analog in this context.

In the rest of this introduction, we give the necessary definitions to state our main results. In Section \ref{S: Definitions}, we first recall classical definitions on quadrangulations and labeled trees; we also describe the construction of the UIPQ given in \cite{CuMenMi} and the ``Schaeffer-type'' correspondence it relies on. Section \ref{S: Leftmost geodesic} gives more details on the UIPQ re-rooted at the $k$-th point on the leftmost infinite geodesic ray starting from the root. In particular, we explain why it is enough to study the local limit of the parts on each side of this geodesic. This leads us to extend the correspondence to a larger class of infinite labeled trees, which encode planar quadrangulations with a geodesic boundary (see Section 1.3). Finally, in Section 1.4, we state our main convergence results for these trees and the associated quadrangulations.

\subsection{Well-labeled trees and associated quadrangulations} \label{S: Definitions}

\subsubsection{First definitions on finite and infinite planar maps} \label{S: Planar maps}

A finite planar map is a proper embedding of a finite connected graph, possibly with multiple edges or loops, into the two-dimensional sphere (or more rigorously, the equivalence class of such a graph, modulo orientation-preserving homeomorphisms).

We first introduce some notation for such a map $\mathfrak{m}$. Let $V(\mathfrak{m})$, $E(\mathfrak{m})$ and $\ora{E}(\mathfrak{m})$ denote the sets of the vertices, edges and oriented edges of $\mathfrak{m}$, respectively. The faces of $\mathfrak{m}$ are the connected components of the complement of $E(\mathfrak{m})$. We say that a face is incident to $e \in \ora{E}(\mathfrak{m})$ if it is the face on the left of $e$. The degree of a face is the number of edges it is incident to. A corner of $\mathfrak{m}$ is an angular sector between two edges of $\mathfrak{m}$. Note that there is a bijective correspondence between the corners of $\mathfrak{m}$ and its oriented edges; we say that a corner is incident to $e \in \ora{E}(\mathfrak{m})$ if it is the corner on the left of $e$, next to its origin.

We say that a finite planar map is rooted if it comes with a distinguished oriented edge, called the root edge; the origin vertex of the root is called the root vertex, and the face which is incident to the root is called the root face. A planar map is a quadrangulation if all faces have degree $4$, and a tree if it has only one face. A quadrangulation with a boundary is a planar map with a distinguished face called the external face, such that the boundary of the external face is simple and all other faces have degree $4$. We let $\mathcal{Q}_f$, $\mathcal{Q}_{f,b}$ and $\mathcal{T}_f$ respectively denote the sets of finite quadrangulations, quadrangulations with a boundary and trees.

Let us now define the local limit topology on these sets. For any rooted map $\mathfrak{m}$, let $B_{\mathfrak{m}}(r)$ denote the ball of radius $r$ in $\mathfrak{m}$, centered at the root-vertex (i.e. the planar map defined by the edges of $\mathfrak{m}$ whose extremities are both at distance at most $r$ from the root-vertex, for the graph-distance on $\mathfrak{m}$). For all finite planar maps $\mathfrak{m}$, $\mathfrak{m}'$, we let
\begin{gather*}
D(\mathfrak{m},\mathfrak{m}') = (1+\sup \{ r \geq 0: B_{\mathfrak{m}}(r) = B_{\mathfrak{m}'}(r) \})^{-1}.
\end{gather*}
The local topology is the topology associated to this distance. Let $\mathcal{Q}$, $\mathcal{Q}_b$ and $\mathcal{T}$ denote the completions of $\mathcal{Q}_f$, $\mathcal{Q}_{f,b}$ and $\mathcal{T}_f$ for this topology. The elements of $\mathcal{Q}_{\infty} := \mathcal{Q} \setminus \mathcal{Q}_f$ (resp. $\mathcal{T}_{\infty} := \mathcal{T} \setminus \mathcal{T}_f$) are \emph{infinite} planar quadrangulations (resp. trees). All the notations introduced above for finite planar maps have natural extensions to the above sets. We let $\mathcal{Q_{\infty,\infty}}$ denote the set of the quadrangulations with an infinite boundary, i.e. the elements of $\mathcal{Q}_b$ which are defined as limits of sequences of maps in $\mathcal{Q}_{f,b}$ whose external faces have degrees going to infinity.

Any element $Q$ of $\mathcal{Q}_{\infty}$ or $\mathcal{Q}_{\infty,\infty}$ can be seen as a gluing of quadrangles which defines an orientable, connected, separable surface, with a boundary in the second case. See \cite[Appendix]{CuMenMi} for details. We are interested in two cases:
\begin{itemize}
\item If the corresponding surface is homeomorphic to $S=\R^2$, we say that $Q$ is an infinite quadrangulation of the plane.
\item If the corresponding surface is homeomorphic to $S=\R \times \R_{+}$, we say that $Q$ is an infinite quadrangulation of the half-plane.
\end{itemize}
In both of these cases, $Q$ can be drawn onto $S$ in such a way that every face is bounded, every compact subset of $S$ intersects only finitely many edges of $Q$, and in the second case, the union of the boundary edges is $\R \times \{0\}$. By convention, if the root edge belongs to this boundary and is oriented from left to right, we say that $Q$ is a quadrangulation of the upper half-plane, and if it is oriented from right to left, we say that $Q$ is a quadrangulation of the lower half-plane. We let $\textbf{Q}$ denote the set of the quadrangulations of the plane, and $\uhpq$ (resp. $\lhpq$) denote the set of the infinite quadrangulations $Q$ of the upper half-plane (resp. lower half-plane) such that the boundary of $Q$ is a geodesic path in $Q$.

As explained in \cite[Appendix]{CuMenMi}, an element $Q$ of $\mathcal{Q}_{\infty}$ is a quadrangulation of the plane if and only if it has exactly one \emph{end} - which means, in terms of maps, that for all $r \in \N$, the map $Q \setminus B_{Q}(r)$ has exactly one infinite connected component. For an element $Q$ of $\mathcal{Q}_{\infty,\infty}$, one can check that $Q$ is a quadrangulation of the half-plane if and only if the same condition holds. Indeed, the infinite quadrangulation obtained by gluing a copy of the lattice $\Z \times \Z_-$ along the boundary also has one end (the number of ends can only decrease when we perform this operation), so it is a quadrangulation of the plane.

In what follows, the trees and quadrangulations we consider will be elements of $\textbf{T} := \mathcal{T}$, $\textbf{Q}$, $\uhpq$ and $\lhpq$. The uniform infinite quadrangulation (UIPQ) is a random variable in $\textbf{Q}$ whose distribution is the limit of the uniform distribution on planar quadrangulations with $n$ faces, as $n \rightarrow \infty$.

\subsubsection{Well-labeled trees} \label{S: Labelled trees}

We say that $(T,l)$ is a well-labeled (plane, rooted) tree if $T$ is an element of $\textbf{T}$ and $l$ is a mapping from $V(T)$ into $\Z$ such that $\abs{l(\pt{u}) - l(\pt{v})} \leq 1$ for every pair of neighbouring vertices $\pt{u},\pt{v}$. Let $\ltrees{}$ be the set of such trees. More precisely, for all $x \in \Z$ and $n \geq 0$, let $\ltrees{n}(x)$ be the set of well-labeled plane rooted trees with $n$ edges and root-label $x$, and
\begin{gather*}
\ltrees{n} = \bigcup_{x \in \Z} \ltrees{n}(x).
\end{gather*}
Similarly, for all $x \in \Z$, let $\ltrees{\infty}(x)$ denote the set of infinite well-labeled plane rooted trees with root-label $x$, and
\begin{gather*}
\ltrees{\infty} = \bigcup_{x \in \Z} \ltrees{\infty}(x).
\end{gather*}
We thus have
\begin{gather*}
\ltrees{} = \bigcup_{n \in \N\cup \{0,\infty\}} \ltrees{n} = \bigcup_{n \in \N\cup \{0,\infty\}} \bigcup_{x \in \Z} \ltrees{n}(x).
\end{gather*}

For any (infinite) plane rooted tree $T$, we say that $(\pt{u}_i)_{i \geq 0}$ is a spine in $T$ if $\pt{u}_0$ is the root of $T$ and if for all $i \geq 0$, $\pt{u}_i$ is the parent of $\pt{u}_{i+1}$. We let $\strees$ be the set of all plane rooted trees having exactly one spine, and consider the corresponding sets of labeled trees:
\begin{gather*}
\lstrees(x) = \{(T,l) \in \ltrees{\infty}(x): T \in \strees \} \qquad \forall x \in \Z, \\
\lstrees = \{(T,l) \in \ltrees{\infty}: T \in \strees \}.
\end{gather*}
For every $T \in \strees$, we let $(\spine_i(T))_{i \geq 0}$ be the spine of $T$. Any vertex $\spine_i(T)$ has a subtree ``to its left'' and a subtree ``to its right'' in $T$, which we denote by $L_i(T)$ and $R_i(T)$ respectively. To give a formal definition of these subtrees, we consider two orders on $V(T)$: the depth-first order, denoted by $<$, and the partial order $\prec$ induced by the genealogy, defined for all $\pt{u},\pt{v} \in V(T)$ by $\pt{u} \prec \pt{v}$ if $\pt{u}$ is an ancestor of $\pt{v}$ in $T$. With this notation:
\begin{itemize}
\item $L_i(T)$ is the subtree of $T$ containing the vertices $\pt{v}$ such that $\spine_i \leq \pt{v} < \spine_{i+1}$.
\item $R_i(T)$ is the subtree of $T$ containing $\spine_i$ and the vertices $\pt{v}$ such that $\spine_{i+1} < \pt{v}$ and $\spine_{i+1} \nprec \pt{v}$.
\end{itemize}
We also use the natural extensions of these notations to well-labeled trees.

\subsubsection{The Schaeffer correspondence between infinite trees and quadrangulations} \label{S: Schaeffer}

In this section, we recall the definition of the Schaeffer correspondence used in \cite{CuMenMi}, which matches infinite well-labeled trees with infinite quadrangulations of the plane.

For all $x \in \Z$, let
\begin{gather*}
\lstrees^{\ast}(x) = \{ (T,l) \in \lstrees(x): \inf_{i \geq 0} l(\spine_i(T)) = -\infty \}.
\end{gather*}
We fix $\theta=(T,l) \in \lstrees^{\ast}(0)$. Let $\pt{c}_n$, $n \in \Z$ denote the corners of $T$, taken in the clockwise order, with $\pt{c}_0$ the root-corner. For all $n$, we say that the label of $\pt{c}_n$ is the label of the vertex which is incident to $\pt{c}_n$, and we define the successor $\sigma_{\theta}(\pt{c}_n)$ of $\pt{c}_n$ as the first corner among $\pt{c}_{n+1}, \pt{c}_{n+2}, \ldots$ such that
\begin{gather*}
l(\sigma_{\theta}(\pt{c}_n)) = l(\pt{c}_n)-1.
\end{gather*}
We now let $\Phi(\theta)$ denote the graph whose set of vertices is $V(T)$, whose edges are the pairs $\{\pt{c},\sigma_{\theta}(\pt{c})\}$ for all corners $\pt{c}$ of $T$, and whose root-edge is $(\pt{c}_0,\sigma_{\theta}(\pt{c}_0))$. Figure \ref{F: Schaeffer Bijection} gives an example of this construction. Note that $\Phi(\theta)$ can be embedded naturally in the plane, by considering a specific embedding of $T$ and drawing arcs between every corner and its successor in a non-crossing way. Moreover, Proposition 2 of \cite{CuMenMi} shows that for all $\theta \in \lstrees^{\ast}(0)$, $\Phi(\theta)$ is an infinite quadrangulation of the plane.
\begin{figure}[t]
\begin{center}
\includegraphics{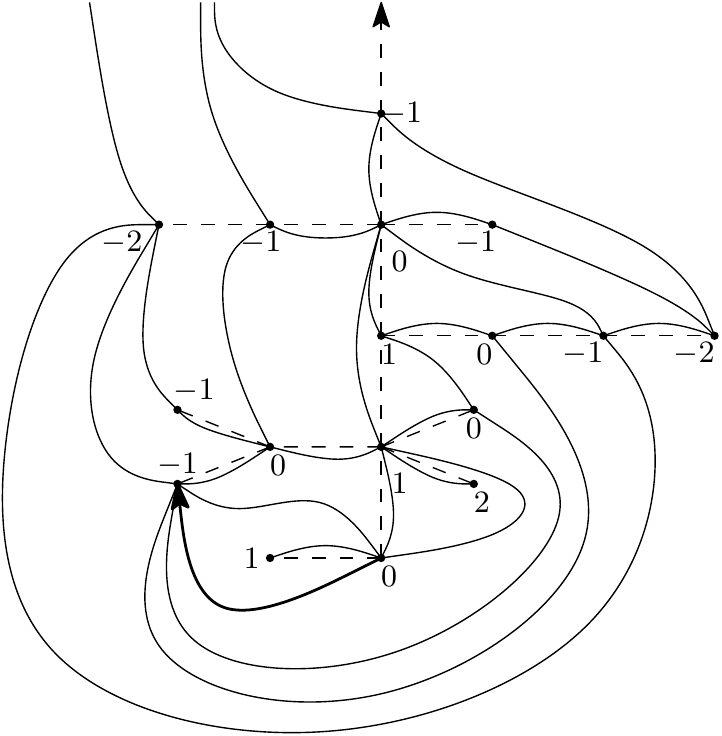}
\caption{The quadrangulation $\Phi(\theta)$ obtained by applying the Schaeffer correspondence to a labeled tree $\theta$. The edges of $\theta$ are represented by dashed lines.}
\label{F: Schaeffer Bijection}
\end{center}
\end{figure}

For a technical reason, we extend this definition to trees $\theta \in \lstrees^{\ast}(1)$ by keeping the same vertices and edges, and choosing $(\sigma_{\theta}(\pt{c}_0), \sigma_{\theta} (\sigma_{\theta}(\pt{c}_0)))$ as the root. (Thus the root edge of $\Phi(\theta)$ always goes from vertices with labels $0$ and $-1$ in $\theta$.) For all $\theta \in \lstrees^{\ast}(1)$, we still have $\Phi(\theta) \in \textbf{Q}$.

\subsubsection{Uniform infinite labeled tree and quadrangulation} \label{S: UIPQ}

For all $x \in \Z$, let $\GWgeom{x}$ be the law of a Galton--Watson tree with offspring distribution $\geom(1/2)$, such that the root has label $x$ and, for any vertex $\pt{v}$ other than the root, the label of $\pt{v}$ is uniform in $\{\ell-1,\ell,\ell+1\}$, with $\ell$ the label of its parent. The uniform infinite labeled tree is the random variable $\theta_{\infty} = (T_{\infty}, l_{\infty}) \in \lstrees(0)$ whose distribution is characterized by the following properties:
\begin{itemize}
\item the process of the spine-labels $(S_i(\theta_{\infty}))_{i \geq 0} := (l_{\infty}(\spine_i(T_{\infty})))_{i \geq 0}$ is a random walk with independent uniform steps in $\{-1,0,1\}$,
\item conditionally on $(S_i(\theta_{\infty}))_{i \geq 0}$, the trees $L_i(\theta_{\infty})$ and $R_i(\theta_{\infty})$ are independent labeled trees distributed according to $\GWgeom{S_i}$.
\end{itemize}
For all $n \in \N$, we also let $\theta_n = (T_n,l_n)$ be a uniform random element of $\ltrees{n}(0)$. It is known that $\theta_n$ converges to $\theta_{\infty}$ for the local limit topology, as $n \rightarrow \infty$ (as noted in \cite{CuMenMi}, it is a consequence of \cite[Lemma 1.14]{Kes}). Note that we have $\theta_{\infty} \in \lstrees^{\ast}(0)$ almost surely, and let $Q_{\infty} := \Phi(\theta_{\infty})$.

It was shown in \cite{CuMenMi} that the UIPQ can be seen as the random quadrangulation $\widetilde{Q}_{\infty}$ equal to $Q_{\infty}$ with probability $1/2$, and to the quadrangulation obtained by reversing the root edge of $Q_{\infty}$ with probability $1/2$.

\subsection{Re-rooting the UIPQ at the $k$-th point on the leftmost geodesic ray} \label{S: Leftmost geodesic}

Let us first clarify what we mean by the leftmost geodesic originating form the root in the UIPQ. It is known from \cite{CuMenMi} that for all vertices $\pt{u},\pt{v}$ of $\widetilde{Q}_{\infty}$, the quantity
\begin{gather*}
\lim_{\pt{w} \rightarrow \infty} \(d_{\widetilde{Q}_{\infty}}(\pt{u},\pt{w}) - d_{\widetilde{Q}_{\infty}}(\pt{v},\pt{w})\)
\end{gather*}
is well defined (in the sense that the difference of those distances is the same except for a finite number of vertices $\pt{w}$), and equal to the difference of the labels of $\pt{u}$ and $\pt{v}$ in the corresponding tree. As a consequence, letting $e$ denote the root edge of $Q_{\infty}$, with $e^-$ its origin and $e^+$ its other extremity, we have
\begin{gather*}
\lim_{\pt{w} \rightarrow \infty} \(d_{\widetilde{Q}_{\infty}}(e^-,\pt{w}) - d_{\widetilde{Q}_{\infty}}(e^+,\pt{w})\) = 1.
\end{gather*}
In other words, the extremity of the root edge of $\widetilde{Q}_{\infty}$ which is ``closest to infinity'' is well defined, and equal to $e^+$. Therefore, it is natural to say that the leftmost geodesic ray started from the root in $\widetilde{Q}_{\infty}$ is the unique path $\gamma_L=(\gamma_L(i))_{i \geq 0}$ such that $\gamma_L(0)=e^-$, $\gamma_L(1)=e^+$ and for all $i \geq 1$, $\gamma_L(i+1)$ is the first neighbour of $\gamma_L(i)$ after $\gamma_L(i-1)$ (in the clockwise order) such that
\begin{gather*}
\lim_{\pt{w} \rightarrow \infty} \(d_{\widetilde{Q}_{\infty}}(\gamma_L(i),\pt{w}) - d_{\widetilde{Q}_{\infty}}(\gamma_L(i+1),\pt{w})\) = 1.
\end{gather*}
Note that the definition of the leftmost geodesic ray does not depend on whether the root edge of $\widetilde{Q}_{\infty}$ has the same orientation as that of $Q_{\infty}$ or not, so it is sufficient to work with $Q_{\infty}$ in the rest of the article.

The leftmost geodesic also has a natural definition in terms of the tree $\theta_{\infty}$. For all $k \geq 0$, let $\pt{e}_k$ be the $k$-th corner on the chain of the iterated successors of $\pt{e}_0$, where $\pt{e}_0$ is the root corner of $\theta_{\infty}$. Equivalently, $\pt{e}_k$ can be seen as the first corner with label $-k$ after the root, in the clockwise order. We use the same notation for the corresponding vertex in $Q_{\infty}$. The path $\gamma_{\max} := (\pt{e}_k)_{k \geq 0}$ is a geodesic ray in $Q_{\infty}$, called the maximal geodesic in \cite{CuMenMi}, and equal to $\gamma_L$.

Curien, Ménard and Miermont proved in \cite{CuMenMi} that all other geodesic rays from $\pt{e}_0$ to infinity are essentially similar to $\gamma_{\max}$: almost surely, there exists an infinite sequence of distinct vertices of $Q_{\infty}$ such that every geodesic ray from $\pt{e}_0$ to infinity passes through all these vertices. Our main goal is to study the local limit of $Q_\infty^{(k)}$ as $k \rightarrow \infty$, where $Q_\infty^{(k)}$ denotes the quadrangulation $Q_{\infty}$ re-rooted at $(\pt{e}_k,\pt{e}_{k+1})$.

More precisely, we will study what the quadrangulation looks like on the left and on the right of the geodesic ray $\gamma_{\max}$. This leads us to introduce the ``split'' quadrangulation $\spq(Q_{\infty})$ obtained by ``cutting'' $Q_{\infty}$ along $\gamma_{\max}$; formally, $\spq(Q_{\infty})$ is an infinite quadrangulation of the (lower) half-plane whose boundary is formed by the edges $(\pt{e}_k,\pt{e}_{k+1})$ on the left of $\pt{e}_0$, and by copies $(\pt{e}'_k,\pt{e}'_{k+1})$ of these edges on the right of $\pt{e}_0$. This construction is illustrated in Figure \ref{F: Split UIPQ}. For all $k \geq 0$, we let $\ora{Q}_{\infty}^{(k)}$ denote the quadrangulation having the same vertices and edges as $\spq(Q_{\infty})$, with root $(\pt{e}_k,\pt{e}_{k+1})$, and $\ola{Q}_{\infty}^{(k)}$ denote the quadrangulation having the same vertices and edges as $\spq(Q_{\infty})$, with root $(\pt{e}'_k,\pt{e}'_{k+1})$. Thus, since $(\pt{e}_k)_{k \geq 0}$ and $(\pt{e}'_k)_{k \geq 0}$ are geodesics in $\ora{Q}_{\infty}^{(k)}$ and $\ola{Q}_{\infty}^{(k)}$, we have the following property:

\begin{lem}
For all $r \leq k$, the ball of radius $r$ in $Q_\infty^{(k)}$ is the same as the union of the balls of radius $r$ in $\ora{Q}_{\infty}^{(k)}$ and $\ola{Q}_{\infty}^{(k)}$.
\end{lem}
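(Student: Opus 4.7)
My plan is to prove the two inclusions separately, identifying the balls in the split quadrangulations with their images in $Q_\infty^{(k)}$ under the natural projection $\pi\colon \spq(Q_\infty) \to Q_\infty$ that sends each ``right'' copy $\pt{e}'_j$ back to $\pt{e}_j$ and is the identity on all other vertices.

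The inclusion $B_{\ora{Q}_\infty^{(k)}}(r) \cup B_{\ola{Q}_\infty^{(k)}}(r) \subseteq B_{Q_\infty^{(k)}}(r)$ follows from the fact that $\pi$ is a graph morphism: every edge of $\spq(Q_\infty)$ is carried to an edge of $Q_\infty$ with the same endpoints after projection, so a path in $\spq(Q_\infty)$ projects to a path of the same length in $Q_\infty$. This yields $d_{Q_\infty}(\pt{e}_k, \pi(\pt{v})) \leq d_{\spq(Q_\infty)}(\pt{e}_k, \pt{v})$ and the analogous inequality starting from $\pt{e}'_k$.

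For the reverse inclusion, pick a vertex $\pt{v}$ with $m := d_{Q_\infty}(\pt{e}_k, \pt{v}) \leq r$ and a geodesic $\pt{e}_k = \pt{v}_0, \pt{v}_1, \ldots, \pt{v}_m = \pt{v}$ in $Q_\infty$. Let $i^{\ast} = \max\{i \leq m : \pt{v}_i \in \gamma_{\max}\}$ (which is well defined since $0$ is in the set) and write $\pt{v}_{i^{\ast}} = \pt{e}_{j^{\ast}}$. Since both $\gamma_{\max}$ and the chosen path are geodesics, one checks $i^{\ast} = d_{Q_\infty}(\pt{e}_k, \pt{e}_{j^{\ast}}) = |k - j^{\ast}|$; combined with $i^\ast \leq m \leq r \leq k$, this keeps $j^{\ast}$ in the valid range of geodesic indices, so the segment of $\gamma_{\max}$ between $\pt{e}_k$ and $\pt{e}_{j^{\ast}}$ is a well-defined sub-path of $\gamma_{\max}$. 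Substituting this segment for $\pt{v}_0, \ldots, \pt{v}_{i^{\ast}}$ produces another geodesic $\pi'$ of length $m$ from $\pt{e}_k$ to $\pt{v}$ whose first $|k - j^{\ast}|$ steps run along $\gamma_{\max}$. By maximality of $i^{\ast}$, the tail $\pt{v}_{i^{\ast}+1}, \ldots, \pt{v}_m$ of $\pi'$ avoids $\gamma_{\max}$ entirely; by planarity, its edges (with both endpoints off $\gamma_{\max}$) cannot cross $\gamma_{\max}$, so the tail lies in a single connected component of the complement of $\gamma_{\max}$ in the plane---either on the left or on the right side.

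Lifting $\pi'$ to $\spq(Q_\infty)$ on that side, the tail admits a unique lift (since its vertices are off $\gamma_{\max}$) and the initial $\gamma_{\max}$-segment lifts to the corresponding side of the boundary of $\spq(Q_\infty)$ starting from $\pt{e}_k$ (left) or $\pt{e}'_k$ (right). This produces a path of length $m \leq r$ from the appropriate root to a lift of $\pt{v}$, so $\pt{v}$ lies in $B_{\ora{Q}_\infty^{(k)}}(r)$ or in $B_{\ola{Q}_\infty^{(k)}}(r)$; carrying out the same construction at the level of edges yields the equality as subgraphs. The main obstacle I expect is the planarity bookkeeping: checking rigorously that the tail of $\pi'$ sits on a single fixed side of $\gamma_{\max}$, and that its lift together with the lift of the initial $\gamma_{\max}$-segment fit together into a genuine path in $\spq(Q_\infty)$. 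This should ultimately reduce to the non-crossing property of edges in the planar embedding and the fact that $\pt{e}_0$ is the unique vertex of $\gamma_{\max}$ not duplicated in the split.
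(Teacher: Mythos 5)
Your plan --- projecting paths to $Q_\infty$ for one inclusion, and for the other normalising a geodesic so that its initial segment runs along $\gamma_{\max}$ and then lifting it to $\spq(Q_\infty)$ --- is the right approach, and it matches the paper's reasoning, which treats the lemma as an immediate consequence of $(\pt{e}_j)_{j\geq 0}$ and $(\pt{e}'_j)_{j\geq 0}$ being geodesic rays in the split quadrangulation. One imprecision: the ray $\gamma_{\max}$ does not separate the plane into two components, so the claim that the tail ``lies in a single connected component of the complement of $\gamma_{\max}$, either on the left or on the right side'' is wrong as stated. The fact you actually need is only that the tail's vertices are off $\gamma_{\max}$, hence lift uniquely, and that the copy of $\pt{e}_{j^{\ast}}$ from which the lifted tail departs is read off locally from the cyclic position of the first tail edge around $\pt{e}_{j^{\ast}}$; no global ``side'' of $\gamma_{\max}$ is involved.

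The genuine gap is in ``carrying out the same construction at the level of edges'': lifting a geodesic to $\pt{v}$ and, separately, a geodesic to a neighbour $\pt{w}$ can land at opposite roots $\pt{e}_k$ and $\pt{e}'_k$, in which case the lifted edge lies in neither $B_{\ora{Q}_\infty^{(k)}}(r)$ nor $B_{\ola{Q}_\infty^{(k)}}(r)$. To close this, observe that your vertex argument actually establishes, for every lift $\tilde{\pt{v}}$ of a vertex $\pt{v}$ with $m := d_{Q_\infty}(\pt{e}_k,\pt{v}) \leq k$, the equality $\min(d_{\spq(Q_\infty)}(\pt{e}_k,\tilde{\pt{v}}),\, d_{\spq(Q_\infty)}(\pt{e}'_k,\tilde{\pt{v}})) = m$: projecting gives $\geq$; your lift shows one of the two distances is $\leq m$ for the unique lift when $\pt{v}$ is off $\gamma_{\max}$, while for $\pt{v}=\pt{e}_j$ both lifts realize $m=|k-j|$ via the two boundary rays. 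Now let $(\pt{v},\pt{w})$ be any edge of $B_{Q_\infty^{(k)}}(r)$. By bipartiteness of $Q_\infty$, $d_{Q_\infty}(\pt{e}_k,\pt{v})$ and $d_{Q_\infty}(\pt{e}_k,\pt{w})$ differ by exactly one, say they equal $m$ and $m+1\leq r$. Fix a lift $(\tilde{\pt{v}},\tilde{\pt{w}})$ of the edge and a root, say $\pt{e}_k$, with $d_{\spq(Q_\infty)}(\pt{e}_k,\tilde{\pt{v}})=m$. Then $d_{\spq(Q_\infty)}(\pt{e}_k,\tilde{\pt{w}})\leq m+1$ because $\tilde{\pt{w}}$ is a neighbour of $\tilde{\pt{v}}$, and $\geq m+1$ by the equality above applied to $\tilde{\pt{w}}$; hence it equals $m+1\leq r$ and the lifted edge lies in $B_{\ora{Q}_\infty^{(k)}}(r)$, as required.
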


The main idea now consists in studying the limit of the trees encoding $\ora{Q}_{\infty}^{(k)}$ and $\ola{Q}_{\infty}^{(k)}$, and then going back to the associated quadrangulations.

\begin{figure}[!t]
\begin{center}
\includegraphics{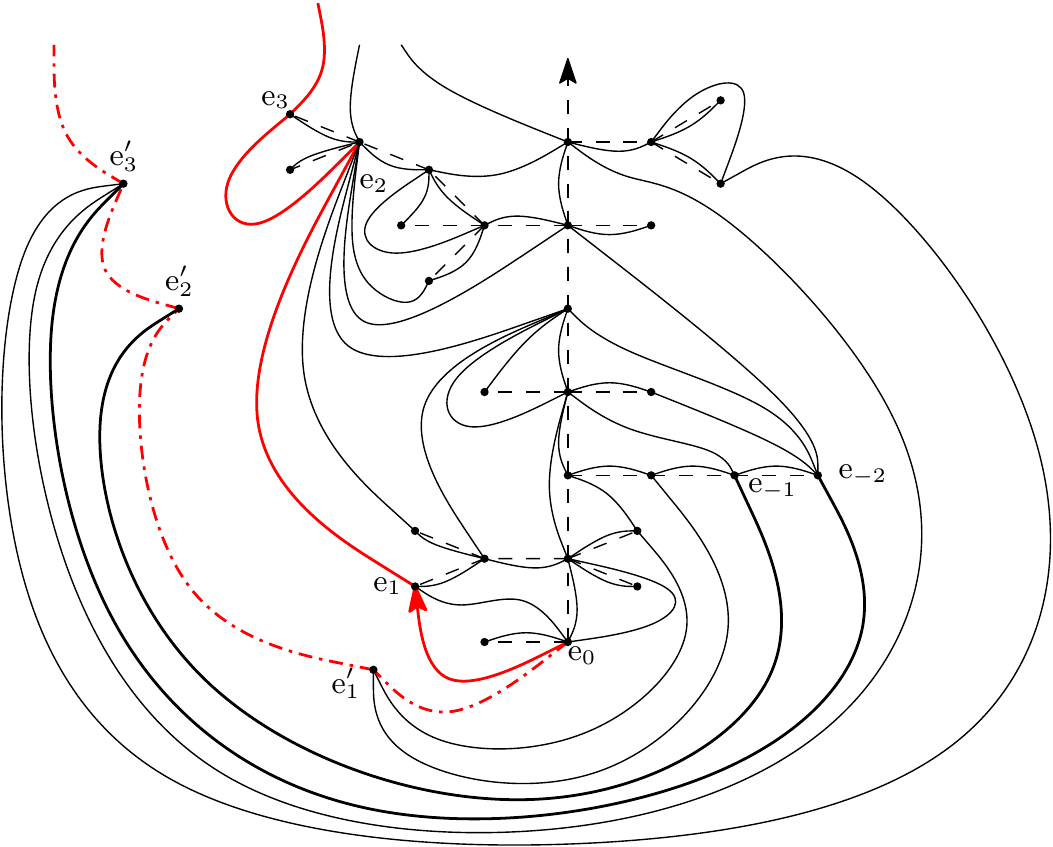}
\caption{The ``split'' quadrangulation $\spq(Q_{\infty})$ obtained from $\theta_{\infty}$. The edges of the underlying tree $\theta_{\infty}$ are represented in dashed lines, and the geodesic ray $\gamma_{\max}$ is represented in red. The labels are omitted to keep the figure readable.}
\label{F: Split UIPQ}
\end{center}
\end{figure}

To this end, for all $k \in \N$, we introduce the tree $\theta_{\infty}^{(k)} = (T_{\infty}^{(k)}, l_{\infty}^{(k)})$, where $T_{\infty}^{(k)}$ is the tree $T_{\infty}$ re-rooted at $\pt{e}_k$, and $l_{\infty}^{(k)}:= l_{\infty} + k$. Note that the vertices $\pt{e}'_k$, $k \geq 1$, contrary to the $\pt{e}_k$, do not correspond to corners of the tree $\theta_{\infty}$. Therefore, for all $k \in \N$, we let $\pt{e}_{-k+1}$ denote the last corner of $\theta_{\infty}$ before the root (still in the clockwise order) such that $\sigma_{\theta_{\infty}} (\pt{e}_{-k+1}) = \pt{e}_k$. Equivalently, $\pt{e}_{-k+1}$ can be seen as the last corner with label $-k+1$ before the root (hence the choice of the index). Now, for all $k \in \N$, we let $\theta_{\infty}^{(-k+1)} = (T_{\infty}^{(-k+1)}, l_{\infty}^{(-k+1)})$, where $T_{\infty}^{(-k+1)}$ is the tree $T_{\infty}$ re-rooted at $\pt{e}_{-k+1}$, and $l_{\infty}^{(-k+1)}:= l_{\infty} + k$. With this notation, for all $k \in \N$, we have $\theta_{\infty}^{(k)} \in \lstrees^{\ast}(0)$, $\theta_{\infty}^{(-k+1)} \in \lstrees^{\ast}(1)$, $\Phi(\theta_{\infty}^{(k)}) = \ora{Q}_{\infty}^{(k)}$ and $\Phi(\theta_{\infty}^{(-k+1)})= \ola{Q}_{\infty}^{(k)}$; but more importantly, we will show in Section \ref{S: Joint CV of the quadrangulations} that the local limits of $\ora{Q}_{\infty}^{(k)}$ and $\ola{Q}_{\infty}^{(k)}$ can be determined using the local limits of $\theta_{\infty}^{(k)}$ and $\theta_{\infty}^{(-k+1)}$.

Intuitively, one can anticipate that the local limit of $\theta_{\infty}^{(k)}$ will be a tree in which the right-hand side only has positive labels, and the local limit of $\theta_{\infty}^{(-k+1)}$ will be a tree in which the left-hand side only has labels greater than $1$. This leads us to extend the domain of $\Phi$ to such trees.

\subsection{Extending the Schaeffer correspondence} \label{S: Extended Schaeffer}

Consider the following subsets of $\lstrees$:
\begin{gather*}
\ora{\lstrees} = \{ (T,l) \in \lstrees(0): \min_{n \leq -1} l(\pt{c}_n(T)) = 1, \lim_{n \rightarrow -\infty} l(\pt{c}_n(T)) = +\infty \mbox{ and } \inf_{n \geq 0} l(\pt{c}_n(T)) = -\infty \} \\
\ola{\lstrees} = \{ (T,l) \in \lstrees(1): \min_{n \geq 1} l(\pt{c}_n(T)) = 2, \lim_{n \rightarrow +\infty} l(\pt{c}_n(T)) = +\infty \mbox{ and } \inf_{n \leq 0} l(\pt{c}_n(T)) = -\infty \}.
\end{gather*}
Here, we show that ``Schaeffer-type'' constructions yield natural associations between  the trees in these sets and quadrangulations of the lower and upper half-planes. Examples of quadrangulations obtained this way are given on Figure \ref{F: extended Schaeffer}.

In the case where $\theta \in \ora{\lstrees}$, the construction is exactly the same as for $\theta \in \lstrees^{\ast}(0)$: for all $n$, we define the successor $\sigma_{\theta}(\pt{c}_n)$ of $\pt{c}_n$ as the first corner among $\pt{c}_{n+1}, \pt{c}_{n+2}, \ldots$ such that
\begin{gather*}
l(\sigma_{\theta}(\pt{c}_n)) = l(\pt{c}_n)-1,
\end{gather*}
and we let $\Phi(\theta)$ denote the graph whose set of vertices is $V(T)$, whose edges are the pairs $\{\pt{c},\sigma_{\theta}(\pt{c})\}$ for all corners $\pt{c}$ of $T$, and whose root-edge is $(\pt{c}_0,\sigma_{\theta}(\pt{c}_0))$.

Now, consider the case where $\theta \in \ola{\lstrees}$. If we use the above construction, then for example, for all $i$, the last corner with label $i$ has no successor. We therefore add a ``shuttle'' $\Lambda$, i.e. a line of new points $\lambda_i$, $i \in \Z$ on which the corners with no successor will be attached. More precisely, for all $n$, the successor of $\pt{c}_n$ is defined as
\begin{gather*}
\sigma_{\theta}(\pt{c}_n) = \left\lbrace \begin{array}{ll}
\pt{c}_{n'} & \mbox{for the smallest } n'\geq n \mbox{ such that } l(\pt{c}_{n'}) = l(\pt{c}_n)-1, \mbox{ if it exists,} \\
\lambda_{l(\pt{c}_n)-1} & \mbox{otherwise,}
\end{array}\right.
\end{gather*}
and we extend this notation to the points of $\Lambda$ by letting $\sigma_{\theta} (\lambda_i) = \lambda_{i-1}$ for all $i \in \Z$.  We let $\Phi(\theta)$ be the graph whose set of vertices is $V(T) \sqcup \Lambda$, whose edges are the pairs $\{\pt{c},\sigma_{\theta}(\pt{c})\}$ for all corners $\pt{c}$ of $T$, and the pairs $\{\lambda_i,\lambda_{i-1}\}$ for all $i \in \Z$, and whose root-edge is $(\lambda_0,\lambda_{-1})$. (Note that the rooting convention is consistent with the one we used to define $\Phi$ on $\lstrees^{\ast}(1)$.)

\begin{figure}[t]
\begin{center}
\includegraphics{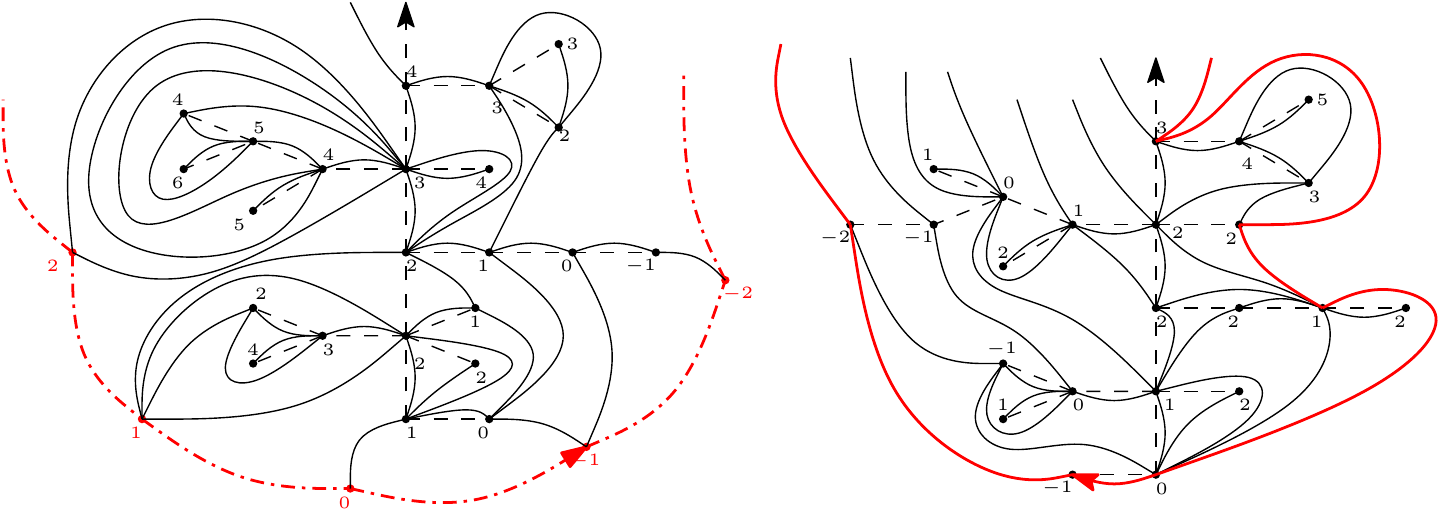}
\caption{Examples of quadrangulations $\Phi(\theta)$ obtained for $\theta \in \protect\ola{\lstrees}$ (on the left-hand side) and $\theta \in \protect\ora{\lstrees}$ (on the right-hand side).}
\label{F: extended Schaeffer}
\end{center}
\end{figure}

\begin{lem}
We have the following properties:
\begin{itemize}
\item If $\theta \in \ora{\lstrees}$, then $\Phi(\theta) \in \lhpq$.
\item If $\theta \in \ola{\lstrees}$, then $\Phi(\theta) \in \uhpq$.
\end{itemize}
\end{lem}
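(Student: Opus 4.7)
The strategy is to adapt the proof of \cite[Proposition 2]{CuMenMi}, which treats $\theta \in \lstrees^{\ast}(0)$, to the two cases at hand; the main novelty is the appearance of a geodesic boundary in $\Phi(\theta)$.

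First, I would check that $\Phi(\theta)$ is a well-defined planar graph whose bounded faces all have degree $4$. For $\theta \in \ora{\lstrees}$, the condition $\inf_{n \geq 0} l(\pt{c}_n) = -\infty$ guarantees the existence of a successor for every corner, so the construction proceeds exactly as in \cite{CuMenMi}. For $\theta \in \ola{\lstrees}$, corners with small enough labels have no successor in $T$ (because $l(\pt{c}_n) \to +\infty$ as $n \to +\infty$), but are consistently sent to the shuttle $\Lambda$. In both cases, the standard local analysis around each tree edge, distinguishing the cases according to the label difference between the parent and child (and treating each shuttle edge as a degenerate analogue thereof), shows that each tree edge contributes exactly one quadrangle face and that no other bounded face arises.

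Second, I would identify the geodesic boundary. For $\theta \in \ora{\lstrees}$, the chain $(\pt{e}_k)_{k \geq 0}$ of iterated successors of $\pt{c}_0$ has labels $0,-1,-2,\dots$ and is a geodesic ray, since in the Schaeffer construction the label of a vertex, up to an additive constant, equals its distance from infinity. The conditions $\min_{n \leq -1} l(\pt{c}_n) = 1$ and $\lim_{n \to -\infty} l(\pt{c}_n) = +\infty$ imply that no corner to the ``left'' of $\pt{c}_0$ in the cyclic order has label $\leq 0$. Consequently, each edge $\{\pt{e}_k,\pt{e}_{k+1}\}$ of the chain has exactly one bounded face incident to it: the ``other'' face that would appear in the plane case requires a corner with label $-k+1 \leq 0$ on the left of the root corner, which simply does not exist. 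So the chain is a boundary of $\Phi(\theta)$, and it is a geodesic. For $\theta \in \ola{\lstrees}$, the shuttle $\Lambda$ is by construction a bi-infinite path along which successive labels decrease by $1$, hence a bi-infinite geodesic; its edges $\{\lambda_i,\lambda_{i-1}\}$ are boundary edges by a symmetric argument using $\min_{n \geq 1} l(\pt{c}_n) = 2$ and $l(\pt{c}_n) \to +\infty$.

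Third, I would check the correct half-plane topology. Connectedness of $\Phi(\theta)$ follows from that of $T$ combined with the Schaeffer edges (and, for $\theta \in \ola{\lstrees}$, the fact that every shuttle point is linked by a Schaeffer edge to tree corners with arbitrarily large labels). To prove one-endedness, I would use the criterion recalled in Section~\ref{S: Planar maps}: gluing a copy of the lattice $\Z \times \Z_-$ along the boundary yields a map that is easily checked to have one end by tracking how the complement of a ball intersects both the tree and the lattice, so $\Phi(\theta)$ itself is one-ended, hence homeomorphic to $\R \times \R_+$. Finally, the orientation of the root edge distinguishing $\lhpq$ from $\uhpq$ is dictated by the conventions of Section~\ref{S: Schaeffer}: for $\theta \in \ora{\lstrees}$ the root edge $(\pt{c}_0,\sigma_{\theta}(\pt{c}_0))$ runs along the boundary from label $0$ to label $-1$ with bounded faces to its right, placing $\Phi(\theta)$ in $\lhpq$; for $\theta \in \ola{\lstrees}$ the root edge $(\lambda_0,\lambda_{-1})$ on the shuttle is oriented so that $\Phi(\theta) \in \uhpq$. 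The main obstacle is the rigorous verification that the iterated-successor chain (resp. the shuttle) is genuinely a boundary, with no extra bounded faces on its ``exterior'' side; this relies crucially on the minimum-label conditions in the definitions of $\ora{\lstrees}$ and $\ola{\lstrees}$, and once settled, the remainder of the argument is a direct transcription of \cite[Proposition 2]{CuMenMi}.
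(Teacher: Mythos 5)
Your overall strategy — face analysis, boundary identification, then one-endedness — matches the paper's, but two steps need correction.

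The geodesic argument is not sound as stated. You justify that the chain is a geodesic because ``the label of a vertex, up to an additive constant, equals its distance from infinity.'' That statement is a theorem about the UIPQ (from \cite{CuMenMi}), not a feature of the Schaeffer construction in general, and in the half-plane setting $\Phi(\theta)$ there is no canonical ``distance from infinity'' to invoke. What the paper uses instead is the elementary lower bound $d_{\Phi(\theta)}(\pt{u},\pt{v}) \geq \abs{l(\pt{u}) - l(\pt{v})}$, which follows directly from the fact that every arc joins a corner to one with label one less. Since consecutive boundary vertices have labels differing by exactly one, the boundary path saturates this bound, hence is a geodesic. This is simpler than your heuristic and actually proves what is needed. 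Relatedly, in the $\ora{\lstrees}$ case your boundary identification stops at the ray $(\pt{e}_k)_{k \geq 0}$ with labels $0,-1,-2,\dots$; the actual boundary is bi-infinite, the backward part consisting of the first-in-clockwise-order corners $\pt{c}_{n_i}$ with label $i$ for $i \geq 1$ (these exist and tend to the left because $\min_{n\leq -1}l(\pt{c}_n)=1$ and $l(\pt{c}_n)\to+\infty$). Your face count only covers the forward edges; you need the same analysis for $\{\pt{c}_{n_{i+1}},\pt{c}_{n_i}\}$, $i\geq 0$.

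The one-endedness argument is logically reversed. You argue: ``the glued map is easily checked to have one end, so $\Phi(\theta)$ itself is one-ended.'' As the paper itself notes, the number of ends can only \emph{decrease} under gluing, so ``glued map has one end'' does not imply ``$\Phi(\theta)$ has one end''; the implication the paper justifies in Section 1.1.1 runs the opposite way. The paper's actual argument is shorter and direct: because $\theta$ has exactly one spine, every vertex of $T$ is at bounded distance from the spine, so the complement of any ball in $\Phi(\theta)$ has a single infinite component (the one containing the tail of the spine). You should replace the glued-lattice detour by this direct observation.

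Apart from these two points the proposal follows the paper's structure — checking finite vertex degrees via the behaviour of $l(\pt{c}_n)$ at $\pm\infty$, the local face study around tree edges, and the shuttle analysis for $\ola{\lstrees}$ — and in particular your emphasis on the minimum-label conditions being the crux of the boundary identification is exactly right.
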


\begin{proof}
In both cases, it is clear that the graph $\Phi(\theta)$ has a natural embedding into the plane, and the conditions on $\liminf_{n \rightarrow -\infty} l(\pt{c}_n)$ ensure that every corner is the successor of a finite number of other corners. Thus every vertex of $\Phi(\theta)$ has finite degree: $\Phi(\theta)$ is an infinite planar map.

As in Schaeffer's usual construction, a simple case study shows that for every corner $\pt{c}$ of $\theta$:
\begin{itemize}
\item The face which is on the right of $(\pt{c}, \sigma_{\theta}(\pt{c}))$ is a quadrangle.
\item If there exists a corner $\pt{c}' < \pt{c}$ such that $l(\pt{c}')=l(\pt{c})$, then the face which is on the left of $(\pt{c}, \sigma_{\theta}(\pt{c}))$ is a quadrangle. If $\theta \in \ola{\lstrees}$, this is always true. If $\theta \in \ora{\lstrees}$, then the only corners for which it is not true are the $\pt{c}_{n_i}$, $i \in \Z$, with $n_i = \min \{ n \in \Z: l(\pt{c}_n)=i \}$. For all $i$, we have $\sigma(\pt{c}_{n_i})=\pt{c}_{n_{i-1}}$, and the face which is on the left of $(\pt{c}_{n_i},\pt{c}_{n_{i-1}})$ is the root face of $\Phi(\theta)$.
\end{itemize}
For $\theta \in \ola{\lstrees}$, we also have to study the faces which are on the left and on the right of the edges $(\lambda_i, \lambda_{i-1})$: we easily see that the first one is always a quadrangle, and that the second one is the same for all $i$. Thus:
\begin{itemize}
\item For all $\theta \in \ora{\lstrees}$, we have $\Phi(\theta) \in \mathcal{Q}_{\infty,\infty}$.
\item For all $\theta \in \ola{\lstrees}$, letting $\ol{\Phi(\theta)}$ denote the map obtained by reversing the root edge of $\Phi(\theta)$, we have $\ol{\Phi(\theta)} \in \mathcal{Q}_{\infty,\infty}$.
\end{itemize}
Note that the construction ensures that the classical bound
\begin{gather} \label{E: Lower bound on d_Phi(.)}
d_{\Phi(\theta)}(\pt{u},\pt{v}) \geq \abs{l(\pt{u},\pt{v})}
\end{gather}
still holds. As a consequence, in both cases, the boundary is a geodesic path. Moreover, the fact that $\theta$ has exactly one spine implies, by construction, that $\Phi(\theta)$ is one-ended.
\end{proof}

Note that for $\theta \in \ora{\lstrees}$, for all $i<i'$, the path $(\lambda_j)_{i \leq j \leq i'}$ is the unique geodesic between $\lambda_i$ and $\lambda_{i'}$. Indeed, all neighbours of $\lambda_{i'}$ different of $\lambda_{i'-1}$ have labels equal to $i+1$, so they are at distance (at least) $i'-i+1$ from $\lambda_i$. In other words, the boundary is the \emph{unique} geodesic path between vertices of $\Lambda$.

\subsection{Main results} \label{S: Results}

The first part of our work is the identification of the limit of the joint distribution of $(\theta_{\infty}^{(k)}, \theta_{\infty}^{(-k+1)})$ as $k \rightarrow \infty$. We begin by using the convergence of $\theta_n$ towards $\theta_{\infty}$ to give an explicit description of this joint distribution.

To give a more precise idea of these results, we adapt the notation of Section \ref{S: Leftmost geodesic} to possibly finite trees. For all $\theta = (T,l) \in \ltrees{}(0)$ and $k \geq 0$ such that $\min_{V(T)} l \leq -k$, let $\pt{e}_k (\theta)$ be the first corner having label $-k$ after the root, in clockwise order, $\pt{e}_{-k} (\theta)$ be the last corner having label $-k$ before the root, and $\pt{v}_k(\theta)$ be the most recent common ancestor of $\pt{e}_k(\theta)$ and $\pt{e}_{-k+1}(\theta)$. Note that for $k=0$, this is well defined since $\pt{e}_0 (\theta) = \pt{e}_{-0} (\theta)$. Finally, we define the finite analogs of $\theta_{\infty}^{(k)}$ and $\theta_{\infty}^{(-k+1)}$: conditionally on $\min_{V(T_n)} l_n \leq - \abs{k}$, for all $n,k \in \N$, we let
\begin{itemize}
\item $\theta_n^{(k)} = (T_n^{(k)}, l_n^{(k)})$, where $T_n^{(k)}$ is the tree $T_n$ re-rooted at $\pt{e}_k (\theta_n)$, and $l_n^{(k)}= l_n + k$,
\item $\theta_n^{(-k+1)} = (T_n^{(-k+1)}, l_n^{(-k+1)})$, where $T_n^{(-k+1)}$ is the tree $T_n$ re-rooted at $\pt{e}_{-k+1} (\theta_n)$, and $l_n^{(-k+1)}= l_n + k$.
\end{itemize}
It is easy to see that:

\begin{lem}
We have the joint convergence in distribution
\begin{gather} \label{E: Convergence of theta_n,k in n}
(\theta_n^{(k)},\theta_n^{(-k+1)}) \cvg{n \rightarrow \infty}{} (\theta_{\infty}^{(k)},\theta_{\infty}^{(-k+1)})
\end{gather}
for the local limit topology.
\end{lem}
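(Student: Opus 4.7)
The approach is a continuous mapping argument. Since the excerpt already establishes that $\theta_n \to \theta_\infty$ in distribution for the local topology, Skorokhod's representation theorem allows us to assume this convergence is almost sure on a common probability space. We work on the almost sure event $\{\theta_\infty \in \lstrees^{\ast}(0)\}$, on which $\pt{e}_k(\theta_\infty)$ and $\pt{e}_{-k+1}(\theta_\infty)$ are well defined; it also entails that $\min_{V(T_n)} l_n \leq -k$ for all $n$ large enough, so that the finite re-rootings $\theta_n^{(k)}$ and $\theta_n^{(-k+1)}$ are eventually well defined. The lemma then reduces to the following deterministic continuity property: whenever $\theta$ is a labeled tree on which $\pt{e}_k$ and $\pt{e}_{-k+1}$ are well defined and $\theta_n \to \theta$ locally, one has $(\theta_n^{(k)}, \theta_n^{(-k+1)}) \to (\theta^{(k)}, \theta^{(-k+1)})$ locally.

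To prove this continuity, fix $r \geq 0$. The corner $\pt{e}_k(\theta)$ is identified after finitely many steps of the clockwise traversal starting from $\pt{c}_0$; likewise $\pt{e}_{-k+1}(\theta)$ is identified after finitely many steps of the counter-clockwise traversal (the first corner with label $-k$ encountered counter-clockwise acts as a barrier beyond which no corner can have successor $\pt{e}_k$). Hence all vertices visited by these two traversals lie within some finite graph distance from the root in $\theta$. Pick an integer $R$ strictly exceeding this maximum distance by at least $r+1$, and large enough that $B_\theta(R)$ also contains every neighbour in $\theta$ of any vertex visited by the traversals. Since $\theta_n \to \theta$ locally, for $n$ large we have $B_{\theta_n}(R) = B_\theta(R)$ as labeled rooted plane trees, so the two traversals behave identically in $\theta_n$ and $\theta$. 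In particular $\pt{e}_k(\theta_n) = \pt{e}_k(\theta)$ and $\pt{e}_{-k+1}(\theta_n) = \pt{e}_{-k+1}(\theta)$, viewed as vertices of the common ball. Both re-rooted labeled trees apply the same constant shift $+k$ to the labels, so the balls of radius $r$ around the new roots coincide in $\theta_n^{(k)}$ and $\theta^{(k)}$, and in $\theta_n^{(-k+1)}$ and $\theta^{(-k+1)}$; as $r$ was arbitrary this gives the required joint local convergence.

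The only delicate point is verifying that both traversals in $\theta_\infty$ terminate after finitely many steps at bounded tree-depth, since a priori one might worry about them escaping along the infinite spine. This is true because $\inf_{i \geq 0} l_\infty(\spine_i(T_\infty)) = -\infty$ almost surely (so the spine reaches label $-k$ at some finite index), and because the subtrees $L_i(\theta_\infty)$ and $R_i(\theta_\infty)$ are almost surely finite, being critical Galton--Watson trees with $\geom(1/2)$ offspring distribution. These two facts confine both the clockwise traversal up to $\pt{e}_k$ and the counter-clockwise traversal up to $\pt{e}_{-k+1}$ to a bounded region of $T_\infty$, which is all that the continuous mapping argument needs.
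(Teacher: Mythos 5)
Your proof is correct and takes the same route as the paper, which asserts in one line that re-rooting at $\pt{e}_k$ and $\pt{e}_{-k+1}$ defines continuous maps on $\lstrees^{\ast}(0)$ for the local topology and applies this to $\theta_n \to \theta_\infty$. You have simply spelled out the local-finiteness argument (finite clockwise and counter-clockwise traversals, finite subtrees off the spine) that justifies this continuity.
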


Indeed, the operations which consist in re-rooting a tree $\theta \in \ltrees{}$ at $\pt{e}_k(\theta)$ and $\pt{e}_{-k+1}(\theta)$ are both continuous for the local limit topology on $\lstrees^{\ast}(0)$. Since $\theta_{\infty}$ belongs to this set, this yields the conclusion. This lemma will allow us to give an explicit description of the joint distribution of $\theta_{\infty}^{(k)}$ and $\theta_{\infty}^{(-k+1)}$ (see Proposition \ref{T: Distribution of theta(infty,k)} for the distribution of $\theta_{\infty}^{(k)}$ alone, and Corollary \ref{T: Distribution of theta(infty,k) with two marked points} for the joint distribution).

We use these results to prove the convergence theorem below. Recall that $\GWgeom{x}$ denotes the distribution of a Galton--Watson tree with $\geom(1/2)$ offspring distribution and ``uniform'' labels, with root label $x$. If $x$ is positive, we let $\GWgeom{x}^+$ denote the same distribution, conditioned to have only positive labels. We also introduce a Markov chain $\tilde{X}$ taking values in $\N$, with transition probabilities
\begin{gather*}
p_x := \P{\tilde{X}_1 = x+1 | \tilde{X}_0=x} = \frac{(x+4)(2x+5)}{3(x+2)(2x+3)} \\
r_x := \P{\tilde{X}_1 = x | \tilde{X}_0=x} = \frac{x(x+3)}{3(x+1)(x+2)} \\
q_x := \P{\tilde{X}_1 = x-1 | \tilde{X}_0=x} = \frac{(x-1)(2x+1)}{3(x+1)(2x+3)}.
\end{gather*}
Note that $\tilde{X}$ can be seen as a discrete version of a seven-dimensional Bessel process. Indeed, a theorem of Lamperti \cite{Lamp} shows that, under some easily checked conditions, the rescaled process $((1/\sqrt{n}) \cdot \tilde{X}_{\ent{nt}})_{t \geq 0}$ converges in distribution to a diffusion process with generator
\begin{gather*}
L = \frac{\alpha}{x}\frac{d}{dx} + \frac{\beta}{2} \frac{d^2}{dx^2},
\end{gather*}
where
\begin{gather*}
\alpha = \lim_{x \rightarrow \infty} x \E{\tilde{X}_1-\tilde{X}_0 | \tilde{X}_0=x} = 2
\end{gather*}
and
\begin{gather*}
\beta = \lim_{x \rightarrow \infty} \E{(\tilde{X}_1-\tilde{X}_0)^2 | \tilde{X}_0=x} =  \frac{2}{3},
\end{gather*}
hence in our case
\begin{gather*}
L = \frac{2}{3} \(\frac{3}{x}\frac{d}{dx} + \frac{1}{2} \frac{d^2}{dx^2}\).
\end{gather*}
Thus, $((1/\sqrt{n}) \cdot \tilde{X}_{\ent{nt}})_{t \geq 0}$ converges to $(Z_{2t/3})_{t \geq 0}$, where $Z$ denotes a Bessel(7) process started from 0.

\begin{thm} \label{T: Joint cv of the rerooted trees}
We have the joint convergence in distribution
\begin{gather}
(\theta_{\infty}^{(k)},\theta_{\infty}^{(-k+1)}) \cvg{k \rightarrow \infty}{} (\ora{\theta_{\infty}},\ola{\theta_{\infty}})
\end{gather}
for the local topology, where $\ora{\theta_{\infty}} = (\ora{T_{\infty}}, \ora{l_{\infty}})$ and $\ola{\theta_{\infty}} = (\ola{T_{\infty}}, \ola{l_{\infty}})$ are independent random variables in $\lstrees(0)$ and $\lstrees(1)$, whose distributions are characterized by the following properties:
\begin{itemize}
\item The process $(S_i(\ora{\theta_{\infty}}))_{i \geq 1}$ has the same law as the Markov chain $\tilde{X}$ started from $1$.
\item Conditionally on $(S_i(\ora{\theta_{\infty}}))_{i \geq 0}$, the subtrees $L_i(\ora{\theta_{\infty}})$, $i \geq 0$ and $R_i(\ora{\theta_{\infty}})$, $i \geq 1$ are independent random variables, with respective distributions $\GWgeom{S_i(\ora{\theta_{\infty}})}$ and $\GWgeom{S_i(\ora{\theta_{\infty}})}^+$.
\item We have the joint distributional identities:
\begin{gather*}
(S_i(\ola{\theta_{\infty}})-1)_{i \geq 0} = (S_i(\ora{\theta_{\infty}}))_{i \geq 0} \\
(L_i(\ola{T_{\infty}}), \ola{l_{\infty}}-1)_{i \geq 0} = (R_i(\ora{\theta_{\infty}}))_{i \geq 0} \\
(R_i(\ola{T_{\infty}}), \ola{l_{\infty}}-1)_{i \geq 0} = (L_i(\ora{\theta_{\infty}}))_{i \geq 0}.
\end{gather*}
\end{itemize}
\end{thm}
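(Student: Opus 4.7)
\medskip

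\noindent\textbf{Proof proposal.} The plan is to deduce this convergence from the explicit joint description of $(\theta_{\infty}^{(k)}, \theta_{\infty}^{(-k+1)})$ obtained in Proposition \ref{T: Distribution of theta(infty,k)} and Corollary \ref{T: Distribution of theta(infty,k) with two marked points}. Since the local topology on $\lstrees$ is metrizable and Polish, convergence in distribution reduces to convergence of the joint laws of the first $N$ spine labels together with the subtrees hanging off the first $N$ spine vertices, truncated at a fixed height, for every $N \geq 0$. The computation then splits into three independent tasks: convergence of the spine-label processes, convergence of the subtree distributions, and asymptotic independence of the two sides.

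For the spine-label part, Corollary \ref{T: Distribution of theta(infty,k) with two marked points} presents the spine labels of $\theta_{\infty}^{(k)}$ as a uniform $\{-1,0,1\}$ random walk weighted by a $k$-dependent Radon--Nikodym factor that encodes (i) the fact that $\pt{e}_k(\theta_{\infty})$ is the \emph{first} corner with label $-k$ after the root, and (ii) the joint constraint involving $\pt{e}_{-k+1}(\theta_{\infty})$. I expect this factor to be expressible as a product of $k$-dependent survival probabilities of $\GWgeom{\cdot}$-trees (that all labels on the right of the successor chain stay strictly larger than $-k$), so that the resulting law is a classical Doob $h$-transform of a random walk. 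Passing to the limit $k \to \infty$, the ``forbidden'' level recedes to $-\infty$ and the weighting factor converges to the harmonic function $h(x)$ given by the probability that a $\rho_x$-tree has only positive labels (or a related polynomial thereof). Verifying that the resulting $h$-transformed transitions coincide with $p_x, r_x, q_x$ is the main computation: one writes the generating function of $\geom(1/2)$-trees with uniform $\{-1,0,1\}$ label increments conditioned on positivity, identifies the corresponding harmonic function, and matches the ratios $h(x\pm1)/h(x)$ against the announced probabilities (the cubic growth $h(x)\sim x^5$ implicit in the Bessel(7) scaling limit guides the ansatz).

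For the subtree part, Corollary \ref{T: Distribution of theta(infty,k) with two marked points} should show that, conditionally on the spine labels, the left subtree $L_i(\theta_{\infty}^{(k)})$ is a $\rho_{S_i}$-tree (unconditioned, as the left side of the geodesic carries no label constraint), while the right subtree $R_i(\theta_{\infty}^{(k)})$ is a $\rho_{S_i}$-tree conditioned on all labels being $\geq 1$; this is precisely $\GWgeom{S_i}^+$. The positivity constraint on the right subtrees is exactly the translation of the requirement that no corner strictly between $\pt{e}_0$ and $\pt{e}_k$ has label $-k$, after re-rooting and shifting labels by $+k$. Local convergence of the subtrees then reduces to the standard absolute-continuity argument relating a $\rho_x$-tree conditioned to have minimum label $\geq -k+1$ (a $k$-dependent condition) to its limit $\GWgeom{x}^+$.

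The asymptotic independence of $\ora{\theta_{\infty}}$ and $\ola{\theta_{\infty}}$ follows because the two spines in $T_{\infty}$, starting from $\pt{v}_k$ and $\pt{v}_{-k+1}$ respectively, share only the vertices lying on the original spine above $\pt{v}_k(\theta_{\infty})$; as $k\to\infty$ this common part is pushed to infinity and disappears from any fixed local neighborhood of either new root. The final symmetry identity relating $\ola{\theta_{\infty}}$ to $\ora{\theta_{\infty}}$ by swapping left and right subtrees and shifting labels by $1$ is a direct consequence of the symmetry of the construction $\pt{e}_k \leftrightarrow \pt{e}_{-k+1}$ under the orientation-reversing map, combined with the fact that the re-rooting procedure is defined via clockwise vs. counterclockwise first passages to label $-k$. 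The main obstacle is the computation identifying the transition probabilities $p_x,r_x,q_x$ as the $h$-transform of a uniform walk by the explicit harmonic function associated to $\rho_x^+$; once this is secured, the remaining pieces assemble routinely via a ``stability'' argument (continuity of re-rooting combined with uniform integrability of the Radon--Nikodym densities restricted to finite neighborhoods).
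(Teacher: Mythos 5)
Your high-level structure (convergence of spine labels, convergence of attached subtrees, asymptotic independence, all deduced from the explicit finite-$k$ law) does match the architecture of the paper's proof. However, there are two substantive gaps, and one conceptual confusion, that leave the hard part of the argument unaddressed.

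First, the $h$-transform structure you describe as "the main obstacle" is not the hard part of this theorem: the change of measure $\hat{X}\mapsto\tilde{X}$ via the martingale $M_j = \frac{f(\hat{X}_j)}{f(\hat{X}_0)}\prod_i w(\hat{X}_i)$ is introduced in Section \ref{S: First dist eqns}, and the identification of $p_x,r_x,q_x$ is carried out there for the single-tree case. (Your heuristic "$h(x)\sim x^5$" conflates the degree-3 polynomial $f$, which is the boundary term of the martingale, with the degree-5 polynomial $h$ that appears later only in the computation of first-passage probabilities; neither is simply the survival probability $w(x)$.) What you would actually need to prove Theorem \ref{T: Joint cv of the rerooted trees} is a joint version of that calculation, and your proposal does not engage with it.

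Second, and more importantly, the asymptotic independence is not free. In $\theta_{\infty}$ the spines towards $\pt{e}_k$ and $\pt{e}_{-k+1}$ meet at the common ancestor $\pt{v}_k$, and the joint law of the first $r$ labels on each spine carries a coupling through the common branch $\pt{z}_{\infty,0},\ldots,\pt{z}_{\infty,c}$ from $\pt{v}_k$ to $\pt{e}_0$, whose contribution is a sum over the unknown "meeting height" $k'$ and "meeting label" $Z_{\infty,0}=k'$. Saying that "the common part is pushed to infinity and disappears from any fixed neighborhood" is the correct intuition but does not constitute a proof that the joint law factorizes; one must show that the $k'$-indexed sum $\sum_{k'\geq 1}\mathcal{H}_{x,y,k'}(k)$ (the weight paid by the pair of spine-label prefixes) tends to $1$ uniformly in $x,y\leq r$. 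The paper does this by introducing a third martingale-transformed walk $\dbtilde{Z}$ on the common branch, computing the associated Green-type quantity $\SumGV{k'}{k}$ (Lemma \ref{T: Values of SumGV}), and then splitting the sum over $k'$ into three regimes ($k'\geq k$ contributing $3/7$, an intermediate range contributing $4/7$, and a negligible tail). None of this is present or even sketched in your proposal, and there is no soft argument (uniform integrability, "stability", etc.) that would replace it: without the explicit polynomial cancellations one cannot conclude the limit is exactly $1$. You also omit the final step of showing that the conditioning event $E^{(k)}(r)$, on which the subtree description is valid, has probability tending to $1$; this uses the label convergence in a non-trivial way.
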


We finally extend this convergence to the associated quadrangulations:
\begin{thm} \label{T: Joint CV of the quadrangulations}
Let $\ora{Q}_{\infty} = \Phi(\ora{\theta_{\infty}})$ and $\ola{Q}_{\infty} = \Phi(\ola{\theta_{\infty}})$. We have the joint convergence in distribution
\begin{gather*}
(\ora{Q}_{\infty}^{(k)}, \ola{Q}_{\infty}^{(k)}) \cvg{k \rightarrow \infty}{} (\ora{Q}_{\infty}, \ola{Q}_{\infty})
\end{gather*}
for the local topology. As a consequence, $Q_{\infty}^{(k)}$ converges in distribution towards the quadrangulation of the plane $\olra{Q}_{\infty}$ obtained by gluing together the boundaries of $\ora{Q}_{\infty}$ and $\ola{Q}_{\infty}$ in such a way that their root edges are identified.
\end{thm}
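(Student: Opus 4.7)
The strategy is to combine the tree-level convergence (Theorem \ref{T: Joint cv of the rerooted trees}) with a continuity property of the extended Schaeffer correspondence $\Phi$, and finally to read off the statement about $Q_{\infty}^{(k)}$ from the earlier Lemma which states that, for $r \leq k$, $B_{Q_{\infty}^{(k)}}(r)$ is the union of $B_{\ora{Q}_{\infty}^{(k)}}(r)$ and $B_{\ola{Q}_{\infty}^{(k)}}(r)$. More precisely, I would use Skorokhod's representation theorem to realise the convergence $(\theta_{\infty}^{(k)},\theta_{\infty}^{(-k+1)}) \to (\ora{\theta_{\infty}},\ola{\theta_{\infty}})$ almost surely on some probability space, and then show that for every fixed $r \geq 0$, almost surely, $B_{\ora{Q}_{\infty}^{(k)}}(r) = B_{\ora{Q}_{\infty}}(r)$ and $B_{\ola{Q}_{\infty}^{(k)}}(r) = B_{\ola{Q}_{\infty}}(r)$ for $k$ large enough. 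This will give the joint convergence in distribution of the quadrangulations.

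The core technical step is to establish that $\Phi$ is continuous (at the limit trees, in the appropriate sense) when restricted to $\lstrees^{\ast}(0) \cup \ora{\lstrees} \cup \lstrees^{\ast}(1) \cup \ola{\lstrees}$. The inequality \eqref{E: Lower bound on d_Phi(.)}, $d_{\Phi(\theta)}(\pt{u},\pt{v}) \geq |l(\pt{u})-l(\pt{v})|$, confines the vertices of $B_{\Phi(\theta)}(r)$ to those with labels in $[l(\mbox{root-vertex})-r, l(\mbox{root-vertex})+r]$. Knowing $B_{\Phi(\theta)}(r)$ then reduces to identifying, for each corner $\pt{c}$ with label in this range, its successor $\sigma_{\theta}(\pt{c})$. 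Concretely, it suffices to know a ball in $\theta$ large enough to contain, for every corner $\pt{c}$ with label in $[-r, r]$ lying close to the root, its iterated successors up to level $-r$. Because the limiting spine-label processes $S_i(\ora{\theta_{\infty}})$ and $S_{-i}(\ola{\theta_{\infty}})$ tend to $+\infty$ (they are Markov chains of Bessel type), and because the labels inside the subtrees grafted on the spine are bounded from below only finitely often, I would show that this ``exploration radius'' is almost surely finite, and deduce that the required successor links stabilise for $k$ large enough. A similar stabilisation for the finite-$k$ trees $\theta_{\infty}^{(k)}, \theta_{\infty}^{(-k+1)}$ then gives the continuity statement.

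To conclude, I would combine the joint convergence of the two half-plane quadrangulations with the Lemma of Section \ref{S: Leftmost geodesic}. Since $B_{Q_{\infty}^{(k)}}(r)$ is the union of the two balls of radius $r$ in $\ora{Q}_{\infty}^{(k)}$ and $\ola{Q}_{\infty}^{(k)}$ (glued along the common piece of the geodesic ray) as soon as $r \leq k$, taking $k \to \infty$ identifies the local limit of $Q_{\infty}^{(k)}$ as the gluing of $\ora{Q}_{\infty}$ and $\ola{Q}_{\infty}$ along their geodesic boundaries, with their root edges identified. Continuity of the gluing operation on $\lhpq \times \uhpq$ (for the local topology) then finishes the proof; this is elementary since both boundaries are the unique geodesics between their extremities.

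The main obstacle is the continuity of $\Phi$ at the limit point: one has to control, uniformly in $k$, how far into the tree one must look in order to determine a ball of fixed radius in the quadrangulation. This is delicate precisely because the spine-label processes of $\ora{\theta_{\infty}}$ and $\ola{\theta_{\infty}}$ are transient to $+\infty$, so one must use the specific subtree structure (Galton--Watson trees with $\geom(1/2)$ offspring, possibly conditioned to have positive labels) to ensure that, almost surely, the corners with small labels along the spine accumulate near the root and that their successors are located at a finite, stabilising position.
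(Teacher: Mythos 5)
Your proposal is correct and follows essentially the same strategy as the paper: use Skorokhod's representation theorem to realise the tree convergence almost surely, establish (this is the real work, Proposition \ref{T: Ball inclusions} in the paper) that for every $r$ and $\epsilon$ there is a radius $h$ such that, uniformly for large $k$ including $k=\infty$, the ball of radius $r$ in the quadrangulation is contained in the ball of radius $h$ in the encoding tree with probability at least $1-\epsilon$, and then combine this with the Lemma of Section \ref{S: Leftmost geodesic} and the gluing picture to conclude. You correctly locate the heart of the matter — a uniform-in-$k$ bound on how far into the tree one must look to determine a quadrangulation ball, controlled via the transience of the spine-label chains together with the Galton--Watson structure of the grafted subtrees — although you leave the implementation of this step (the events $\mathbb{A}_L$, $\mathbb{A}_{R+}$ and the inductive ball-growth argument of Lemmas \ref{T: LR conditions for ball inclusions}--\ref{T: LR conditions for k<infty}) at the level of a sketch.
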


Note that $\Phi$ is not continuous at points $\ora{Q}_{\infty}$ and $\ola{Q}_{\infty}$, so this result is not a straightforward consequence of the previous theorem. In the same spirit as Ménard in \cite{Men}, we have to show that the balls of radius $r$ in $\ora{Q}_{\infty}$ and $\ola{Q}_{\infty}$ are included into balls of radius $h(r)$ in the corresponding trees with high probability, uniformly in $k$. This is done in Proposition \ref{T: Ball inclusions}.

The distribution of $\olra{Q}_{\infty}$ could be the subject of further study, in particular concerning its symmetries. Informally, it would be interesting to see if it is invariant under the two following transformations:
\begin{itemize}
\item Rerooting $\olra{Q}_{\infty}$ at the ``lowest'' edge $e$ belonging to an infinite geodesic $(\gamma(i))_{i \in \Z}$, such that $l(e^-)=0$ and $l(e^+)=1$; then taking the quadrangulation obtained by reflection with respect to the root edge.
\item Rerooting $\olra{Q}_{\infty}$ at the ``lowest'' edge $e$ belonging to an infinite geodesic $(\gamma(i))_{i \in \Z}$, such that $l(e^-)=0$ and $l(e^+)=1$; then reversing the root edge.
\end{itemize}
In the first case, the invariance should be easy to derive from symmetries of the UIPQ. The second question appears more difficult and is work in progress.

The paper is organized in the following way. In Sections \ref{S: First convergence} and \ref{S: Joint CV of the trees}, we focus on the convergence of the trees $\theta_{\infty}^{(k)}$ and $\theta_{\infty}^{(-k+1)}$. We first give the proof of the convergence of $\theta_{\infty}^{(k)}$ alone, and then show how the same methods can be applied to derive the joint convergence. Note that the convergence results of Section \ref{S: First convergence} are not necessary in the proof of the joint convergence, but should make the structure of the proof easier to understand. Finally, Section \ref{S: Joint CV of the quadrangulations} is devoted to the proof of Theorem \ref{T: Joint CV of the quadrangulations}.

\paragraph{Acknowledgements:} This work is part of a larger project in collaboration with Grégory Miermont and Erich Baur. The author would like to thank them for the inspiring discussions and their careful proofreadings, as well as Nicolas Curien, whose idea it first was to study this transformation of the UIPQ.

\section{Convergence of $\theta_{\infty}^{(k)}$} \label{S: First convergence}

\subsection{Explicit expressions for the distribution of $\theta_{\infty}^{(k)}$} \label{S: First dist eqns}

In this section, we work with a fixed value of $k \in \N$. Let us introduce some notation for particular vertices and subtrees of $\theta_n^{(k)}$, for $n \in \N \cup \{\infty\}$. All the variables we consider also depend on $k$, and should therefore be denoted with an exponent $^{(k)}$, but we omit it as long as $k$ is fixed, to keep the notation readable. First, let $m_n$ be the graph-distance between $\pt{e}_0(\theta_n)$ and $\pt{e}_k(\theta_n)$, and $\pt{x}_{n,0}, \ldots, \pt{x}_{n,m_n}$ denote the sequence of the vertices which appear on the path from $\pt{e}_k(\theta_n)$ to $\pt{e}_0(\theta_n)$. For all  $i \in \{0,\ldots, m_n\}$, let $X_{n,i} = l_n^{(k)} (\pt{x}_{n,i})$. We also consider the subtrees which appear on each ``side'' of the path $(\pt{x}_{n,0}, \ldots, \pt{x}_{n,m_n})$:
\begin{itemize}
\item For all $i \in \{1, \ldots, m_n\}$, let $\tau_{n,i}$ be the subtree of $\theta_n^{(k)}$ containing the vertices $\pt{v}$ such that in $\theta_n$, we had $\pt{x}_{n,i} \leq \pt{v} < \pt{x}_{n,i-1}$.
\item For all $i \in \{0,\ldots, m_n\}$, let $\tau'_{n,i}$ be the subtree of $\theta_n^{(k)}$ containing the vertices $\pt{v}$ such that in $\theta_n$, we had $\pt{v}=\pt{x}_{n,i}$, or $\pt{x}_{n,i} \prec \pt{v}$, $\pt{x}_{n,i-1} < \pt{v}$ and $\pt{x}_{n,i-1} \nprec \pt{v}$.
\end{itemize}
We emphasize that these subtrees inherit the labels $l_n^{(k)}$ instead of $l_n$, even if we have to use the orders $<$ and $\prec$ on $T_n$ (instead of $T_n^{(k)}$) to define them. The fact that we have to use these orders may seem a bit clumsy since the subtrees are numbered starting from the root $\pt{x}_{n,0}$ of $\theta_n^{(k)}$, but it is necessary to get the distinction between $\tau_{n,m_n}$ and $\tau'_{n,m_n}$. Figure \ref{F: Notation on theta_n,k} sums up the above notation.

\begin{figure}[b]
\begin{center}
\includegraphics{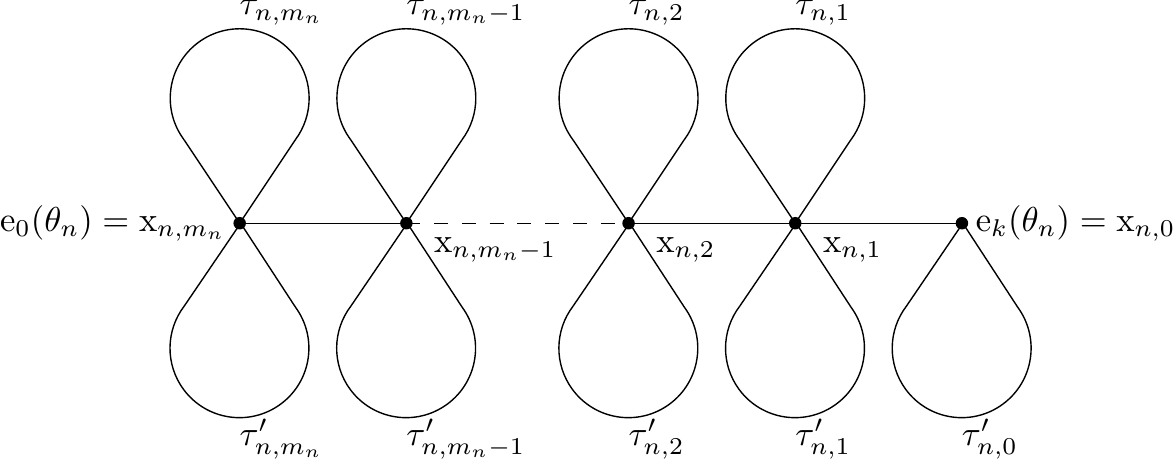}
\caption{Notation for the vertices and subtrees of $\theta_n^{(k)}$.}
\label{F: Notation on theta_n,k}
\end{center}
\end{figure}

Our first step is to characterize the joint distribution of $m_{\infty}$, $(X_{\infty,i})_{0 \leq i \leq m_{\infty}}$, $(\tau_{\infty,i})_{1 \leq i \leq m_{\infty}}$ and $(\tau'_{\infty,i})_{0 \leq i \leq m_{\infty}}$. We introduce some more notation for the sets in which these random variables take their values. For all $m,x,x' \in \N$, let $\mathcal{M}^+_{m, x \rightarrow x'}$ denote the set of the walks $(x_1, \ldots, x_m) \in \N^m$ such that $x_1=x$, $x_m = x'$ and for all $i \leq m-1$, $\abs{x_{i+1} - x_i} \leq 1$. Also let
\begin{gather*}
\ltrees{n}^+(x) = \{(T,l) \in \ltrees{n}(x): l>0 \} \qquad \forall x \in \N, \\
\ltrees{n}^+ = \bigcup_{x \in \N} \ltrees{n}^+(x) \qquad \mbox{and} \qquad
\ltrees{}^+ = \bigcup_{n \geq 0} \ltrees{n}^+.
\end{gather*}
We also use the following facts on the distributions $\GWgeom{x}$ and $\GWgeom{x}^+$: for all $n \geq 0$, it is known that
\begin{gather*}
\GWgeom{x} (\{\theta\}) = \frac{1}{2 \cdot 12^n} \qquad \forall x \in \Z,\ \theta \in \ltrees{n}(x)
\end{gather*}
and Proposition 2.4 of \cite{ChaDu} shows that
\begin{gather} \label{E: w [Cha-Dur]}
\sum_{n \geq 0} \frac{1}{2 \cdot 12^n}\card{\ltrees{n}^+(x)} = w(x) := \frac{x(x+3)}{(x+1)(x+2)} \qquad \forall x \in \N.
\end{gather}
In particular, for all $n \geq 0$ and $x \in \N$, we have $\GWgeom{x} (\ltrees{}^+) = w(x)$ and
\begin{gather*}
\GWgeom{x}^+ (\{\theta\}) = \frac{1}{2 w(x) 12^n} \qquad \forall \theta \in \ltrees{n}^+(x).
\end{gather*}
Finally, for all $m \in \N$ and $(x_0,\ldots,x_m) \in \Z^{m+1}$, we let $\mu_{(x_0,\ldots,x_m)}$ denote the distribution of the forest $(\tilde{\tau}_i)_{0 \leq i \leq m}$ defined as follows. Let $I$ be a uniform random variable in $\{0,\ldots,m\}$. Let $\tilde{\tau}_I$ be a random tree distributed as $(T_{\infty},l_{\infty}+x_I)$, and $\tilde{\tau}_i$, $i \in \{0,\ldots,m\} \setminus \lbrace I \rbrace$ be independent random trees distributed according to $\GWgeom{x_i}$, independent of $\tilde{\tau}_I$.

We can now state the proposition:

\begin{prop} \label{T: Distribution of theta(infty,k)}
We have $X_{\infty,0} = 0\ \as$, and for all $m \in \N$, $\ul{x} \in \mathcal{M}^+_{m, 1 \rightarrow k}$,
\begin{gather*}
\P{m_{\infty}=m, (X_{\infty,1}, \ldots, X_{\infty,m}) = \ul{x}}
= \frac{m+1}{3^m} \prod_{i=1}^m w(x_i).
\end{gather*}
Moreover, conditionally on $m_{\infty}=m$ and $(X_{\infty,1}, \ldots, X_{\infty,m}) = \ul{x}$:
\begin{itemize}
\item The forests $(\tau_{\infty,i})_{1 \leq i \leq m}$ and $(\tau'_{\infty,i})_{0 \leq i \leq m}$ are independent.
\item The trees $\tau_{\infty,i}$, $1 \leq i \leq m$ are independent random variables distributed according to $\GWgeom{x_i}^+$.
\item The forest $(\tau'_{\infty,i})_{0 \leq i \leq m}$ is distributed according to $\mu_{(0,x_1,\ldots,x_m)}$.
\end{itemize}
\end{prop}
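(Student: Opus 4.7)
The plan is to exploit the preceding lemma (which gives $\theta_n^{(k)} \to \theta_\infty^{(k)}$ in distribution as $n \to \infty$) to reduce the problem to a finite-$n$ counting, where $\theta_n$ is uniform on $\ltrees{n}(0)$. Any realization of $\theta_n^{(k)}$ is uniquely determined by the data $(m_n, \ul{X}_n, (\tau_{n,i})_{1 \leq i \leq m_n}, (\tau'_{n,i})_{0 \leq i \leq m_n})$; conversely, given any consistent such choice (with $t_j \in \ltrees{n_j}^+(x_j)$, $t'_j \in \ltrees{n'_j}(x_j)$, convention $x_0 := 0$, and $m + \sum_j n_j + \sum_j n'_j = n$), there is a unique corresponding $\theta_n \in \ltrees{n}(0)$, of probability $\abs{\ltrees{n}(0)}^{-1}$.

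I will fix $(m, \ul{x})$, an index $I \in \{0, \ldots, m\}$, and specific finite subtrees $t_j \in \ltrees{n_j}^+(x_j)$ for $j \geq 1$ and $t'_j \in \ltrees{n'_j}(x_j)$ for $j \neq I$, letting $\tau'_{n, I}$ absorb the remaining $n'_I := n - m - \sum_{j \geq 1} n_j - \sum_{j \neq I} n'_j$ edges. The probability of the finite-$n$ event is then $\abs{\ltrees{n'_I}(x_I)}/\abs{\ltrees{n}(0)}$. Using the Catalan asymptotic $\abs{\ltrees{N}(x)} = 3^N C_N \sim 12^N / (\sqrt{\pi} N^{3/2})$ together with $\GWgeom{x}(\{t\}) = (2 \cdot 12^{\abs{\text{edges}(t)}})^{-1}$, this ratio converges as $n \to \infty$ to $3^{-m} \prod_{j \geq 1} \GWgeom{x_j}(\{t_j\}) \prod_{j \neq I} \GWgeom{x_j}(\{t'_j\})$. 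By the lemma, this identifies the limit as the corresponding probability under the law of $\theta_\infty^{(k)}$.

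The next key step is to verify that in $\theta_\infty^{(k)}$ exactly one of $\tau'_{\infty, 0}, \ldots, \tau'_{\infty, m}$ is infinite and no $\tau_{\infty, j}$ is, almost surely. This follows from one-endedness of $\theta_\infty$ together with the a.s.~finiteness of its original-spine side-subtrees $L_i, R_i \sim \GWgeom{S_i}$, and a short case analysis (depending on whether $\mathbf{e}_k$ lies on the original spine or strictly inside some $L_j$) identifying the unique infinite subtree as the specific $\tau'_{\infty, I}$ that contains the continuation of the original spine past the point where the path from $\mathbf{e}_k$ rejoins it. Consequently $\{m_\infty = m, \ul{X}_\infty = \ul{x}\}$ partitions into $m+1$ disjoint subevents indexed by $I$. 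Summing the joint probability over all finite $t_j, t'_{j \neq I}$, using $\sum_t \GWgeom{x_j}(\{t\}) \ind{t \in \ltrees{}^+} = w(x_j)$ and $\sum_t \GWgeom{x_j}(\{t\}) = 1$, yields probability $3^{-m} \prod_{j=1}^m w(x_j)$ for each $I$; summing over $I \in \{0, \ldots, m\}$ produces the marginal $(m+1) \cdot 3^{-m} \prod_{i=1}^m w(x_i)$.

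The conditional distributions are then read off from the joint-to-marginal ratio: given the event and the index $I$ of the infinite subtree, the forests $(\tau_{\infty,j})_{j \geq 1}$ and $(\tau'_{\infty,j})_{j \neq I}$ are independent with respective laws $\GWgeom{x_j}^+$ and $\GWgeom{x_j}$; the index $I$ is uniform on $\{0, \ldots, m\}$; and $\tau'_{\infty, I}$, conditional on being the infinite one, is the local limit of a uniform random tree in $\ltrees{n'_I}(x_I)$ as $n'_I \to \infty$, hence distributed as $(T_\infty, l_\infty + x_I)$ by the Kesten-type argument already invoked for the convergence $\theta_n \to \theta_\infty$. Since the conditional law of the $\tau$-forest does not depend on $I$, it is independent of the pair $(I, \tau'\text{-forest})$, which matches the $\mu_{(0, x_1, \ldots, x_m)}$-description. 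The main technical obstacle will be the structural claim that exactly one $\tau'$ is infinite, which rests on the careful tracing of the spine of $\theta_\infty^{(k)}$ through $\theta_\infty$ in the two cases above.
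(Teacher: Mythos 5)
Your proposal is correct and follows essentially the same route as the paper: an exact enumeration for finite $n$ (using the Catalan-type formula for $\card{\ltrees{N}(x)}$), a Stirling asymptotic, and the local limit $\theta_n^{(k)}\to\theta_\infty^{(k)}$ from the preceding lemma. The only genuine difference is one of granularity. The paper fixes only $m$ and the forest $(\tau_{n,i})_{1\leq i\leq m}$, computes $P_n^{(k)}(m,\ul{t}) = \mathcal{F}_{m+1,n-(m+N)}/\card{\{\ldots\}}\to (m+1)/(6^m 12^N)$, sums over $\ul{t}$ to get the marginal, reads off the conditional law of the $\tau$'s as a ratio, and then treats the law of $(\tau'_{n,i})_{0\leq i\leq m}$ in one sentence as a uniform labeled forest converging to $\mu_{(0,x_1,\ldots,x_m)}$. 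You instead fix the extra data $I$ and $(t'_j)_{j\neq I}$, so the single free tree $\tau'_{n,I}$ carries all residual edges; the event probability is then a single ratio $\card{\ltrees{n'_I}(x_I)}/\card{\ltrees{n}(0)}$ whose $n\to\infty$ limit $3^{-m}\prod_j\GWgeom{x_j}(\{t_j\})\prod_{j\neq I}\GWgeom{x_j}(\{t'_j\})$ is immediately in product form. This buys you two things the paper leaves implicit: the uniformity of $I$ and the independence structure of $(\tau, I, \tau'_{j\neq I})$ drop out of the formula with no extra argument, and the structural claim that exactly one of the $\tau'_{\infty,i}$ is infinite (and no $\tau_{\infty,i}$ is) is stated and proved rather than tacitly assumed when invoking the local limit of uniform forests. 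Your justification of that structural claim---one-endedness, a.s.~finiteness of the original $L_i,R_i$, and the case split on whether $\pt{e}_k$ lies on the original spine or in some $L_j$ (it cannot lie in an $R_j$ since $\pt{e}_k$ is the first corner with label $-k$ \emph{after} the root)---is correct and is the right argument, though you could tighten the last step by noting that any infinite $\tau_{\infty,i}$ would contain a ray disjoint from the eventual tail of the unique spine, contradicting one-endedness. The final identification of $\tau'_{\infty,I}$ with $(T_\infty,l_\infty+x_I)$ via the Kesten-type local limit of uniform labeled trees is the same appeal the paper makes. Both you and the paper gloss over the minor point that the fixed-tree event is not literally a ball event, so the convergence of its probability requires a short Portmanteau-type argument; since the paper treats this as standard, so may you.
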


The proof of this proposition relies on counting the well-labeled trees with $n$ edges such that the corresponding $m_n$, $(\tau_{n,1},\ldots,\tau_{n,m})$ take a certain value, and using the convergence \eqref{E: Convergence of theta_n,k in n}.

\begin{proof}
We say that a well-labeled forest with $m$ trees is a $m$-tuple of well-labeled plane rooted trees $(t_1,\ldots,t_m)$, such that for all $i \in \{1,\ldots,m-1\}$, the labels of the roots of $t_i$ and $t_{i+1}$ differ by at most $1$. The number of edges of such a forest is the sum of the numbers of edges of the trees $t_1\ldots,t_m$. Let $\lforests{m}{n}$ be the set of well-labeled plane forests with $m$ trees and $n$ edges.

Fix $m,N \geq 0$, $\ul{t} = (t_1,\ldots,t_m) \in \lforests{m}{N}$ such that the root of $t_1$ has label $1$, and all the labels in $\ul{t}$ are positive. For all $n \in \N \cup \{\infty\}$, let
\begin{gather*}
P_n^{(k)} (m,\ul{t}) = \P{m_n=m,(\tau_{n,1}, \ldots, \tau_{n,m})=\ul{t} | \min l_n \leq -k}.
\end{gather*}
We are interested in the behaviour of $P_n^{(k)} (m,\ul{t})$ as $n \rightarrow \infty$, for fixed $k$. Since $\theta_n$ is uniform in $\ltrees{n}(0)$, we have
\begin{gather*}
P_n^{(k)} (m,\ul{t})
= \frac{\mathcal{F}_{m+1,n-(m+N)}}{\card{\{(T,l) \in \ltrees{n}(0): \min l \leq -k\}}},
\end{gather*}
where for all $n' \geq 0$,
\begin{align*}
\mathcal{F}_{m+1,n'}
& = \card{\{ (t'_0,\ldots,t'_m) \in \lforests{m+1}{n'}: \begin{array}{l}
\mbox{the root of } t'_0 \mbox{ has label } 0 \mbox{ and for all } i \geq 1,\\
\mbox{the root of } t'_0 \mbox{ has the same label as the root of } t_i
\end{array} \}} \\
& = \card{\{ (t'_0,\ldots,t'_m) \in \lforests{m+1}{n'}: \mbox{for all } i \geq 1, \mbox{the root of } t'_0 \mbox{ has label } 0 \}}.
\end{align*}
First note that
\begin{gather*}
\card{\(\{(T,l) \in \ltrees{n}(0): \min l \leq -k\}\)} \sim_{n \rightarrow \infty} \card{\ltrees{n}(0)}.
\end{gather*}
Moreover, it can be seen from the well-known cyclic lemma (see \cite{Pit_CSP}) that
\begin{gather} \label{E: Number of ltrees}
\card{\ltrees{n}(0)} = \frac{3^n}{2n+1} \binom{2n+1}{n}
\end{gather}
and
\begin{gather} \label{E: Number of lforests}
\mathcal{F}_{m,n} = \frac{3^n m}{2n+m} \binom{2n+m}{n}.
\end{gather}
Applying these formulas to our case gives
\begin{gather*}
\mathcal{F}_{m+1,n-(m+N)}
= \frac{3^{n-(m+N)} (m+1)}{2n+1-(m+2N)} \binom{2n+1-(m+2N)}{n-(m+N)},
\end{gather*}
and therefore
\begin{gather*}
P_n^{(k)} (m,\ul{t})
\sim_{n \rightarrow \infty} \frac{m+1}{3^{m+N}} \binom{2n+1}{n}^{-1} \binom{2n+1-(m+2N)}{n-(m+N)}.
\end{gather*}
We now use Stirling's formula to get an estimate of the binomial coefficients involved:
\begin{gather*}
\binom{2n+1}{n}
\sim_{n \rightarrow \infty} \frac{2 \cdot 4^n}{\sqrt{\pi n}},
\end{gather*} 
and
\begin{gather*}
\binom{2n+1-(m+2N)}{n-(m+N)} \sim_{n \rightarrow \infty} \frac{4^n}{2^{m+2N-1} \sqrt{\pi n}}.
\end{gather*}
Putting these together, we obtain
\begin{gather*}
P_n^{(k)} (m,\ul{t})
\sim_{n \rightarrow \infty} \frac{m+1}{3^{m+N}2^{m+2N}} = \frac{m+1}{6^m 12^N},
\end{gather*}
so the local convergence \eqref{E: Convergence of theta_n,k in n} implies that
\begin{gather*}
P_{\infty}^{(k)} (m,\ul{t}) = \frac{m+1}{6^m 12^N}.
\end{gather*}
As a consequence, for all $m \in \N$, $\ul{x} \in \mathcal{M}^+_{m, 1 \rightarrow k}$, we have
\begin{gather*}
\P{m_{\infty}=m, (X_{\infty,1}, \ldots, X_{\infty,m}) = \ul{x}}
= \frac{m+1}{6^m} \prod_{i=1}^m \( \sum_{n_i \geq 0} \frac{1}{12^{n_i}} \card{\ltrees{n_i}^+(x_i)} \).
\end{gather*}
Recalling equation \eqref{E: w [Cha-Dur]}, we get
\begin{gather*}
\P{m_{\infty}=m, (X_{\infty,1}, \ldots, X_{\infty,m}) = \ul{x}}
= \frac{m+1}{6^m} \prod_{i=1}^m 2 w(x_i)
= \frac{m+1}{3^m} \prod_{i=1}^m w(x_i).
\end{gather*}
Furthermore, for all $\ul{t} = (t_1,\ldots,t_m) \in \lforests{m}{N}$ such that all the labels in $\ul{t}$ are positive, the conditional probability
\begin{gather*}
\P{(\tau_{\infty,1}, \ldots, \tau_{\infty,m}) = \ul{t}\ |\ m_{\infty}=m, (X_{\infty,1}, \ldots, X_{\infty,m}) = \ul{x}}
\end{gather*}
is equal to
\begin{gather*}
\frac{m+1}{6^m 12^N} \cdot \(\frac{m+1}{3^m} \prod_{i=1}^m w(x_i)\)^{-1}
= \prod_{i=1}^m \frac{1}{2 w(x_i) 12^{|t_i|}}
= \prod_{i=1}^m \GWgeom{x_i}^+ (t_i),
\end{gather*}
hence the conditional distribution of $(\tau_{\infty,1}, \ldots, \tau_{\infty,m_{\infty}})$.

Finally, conditionally on $m_{\infty}=m$, $(X_{\infty,1}, \ldots, X_{\infty,m}) = \ul{x}$ and $(\tau_{\infty,1}, \ldots, \tau_{\infty,m}) = \ul{t} \in \lforests{m}{N}$, the trees $(\tau'_{n,0}, \ldots, \tau'_{n,m})$ form a uniform labeled forest with $m+1$ trees and $n-m-N$ edges, hence the distribution of the limit given in the statement.
\end{proof}

To get the limit of $\theta_{\infty}^{(k)}$, the main step will consist in showing that for any $r \in \N$, the labels $X_{\infty,1}^{(k)},\ldots,X_{\infty,r}^{(k)}$ converge in distribution to the $r$ first steps of the Markov chain $\tilde{X}$ started at $1$, as $k \rightarrow \infty$. For the moment, we show how to make $\tilde{X}$ appear in the above expression; the fact that it is indeed the limit is the purpose of Proposition \ref{T: First labels convergence}.

We first introduce the random walk $(\hat{X}_i)_{i \geq 0}$ with uniform random steps in $\{-1,0,1\}$. From now on, we also adopt the usual notation $\Efrom{x}{\ \cdot\ }$ for the conditional expectation $\E{\ \cdot\ | \hat{X}_0 = x}$, for all $x$. The expression of the lemma implies that
\begin{gather*}
\P{m_{\infty}=m}
= \frac{m+1}{3} \Efrom[sz]{1}{\prod_{i=0}^{m-1} w(\hat{X}_i) \ind{\hat{X}_{m-1}=k}}.
\end{gather*}
(Note that the term in the expectation is zero if we do not have $\hat{X}_i \geq 1$ for all $i \leq m-1$.) Let $f(x) = x(x+3)(2x+3)$ for all $x \in \R$, and
\begin{gather*}
M_j = \frac{f(\hat{X}_j)}{f(\hat{X}_0)} \prod_{i=0}^{j-1} w(\hat{X}_i) \qquad \forall j \geq 0.
\end{gather*}
Under the assumption $\hat{X}_0 =1$, the process $(M_i)_{i \geq 0}$ is a martingale. Using this new process, we get
\begin{align*}
\P{m_{\infty}=m}
& = \frac{m+1}{3} \Efrom[sz]{1}{ \frac{f(1)w(k)}{f(k)} M_{m-1} \ind{\hat{X}_{m-1}=k}} \\
& = \frac{f(1)w(k)}{3f(k)} (m+1) \Pfrom{1}{\tilde{X}_{m-1}=k},
\end{align*}
where $\tilde{X}$ is defined as the image of $\hat{X}$ under the measure-change given by the martingale $M$, i.e. the Markov process such that $\E{\phi(\tilde{X}_i)} = \E{M_i \phi(\hat{X}_i)}$ for every continuous bounded function $\phi$. Computing the transition probabilities of $\tilde{X}$ gives:
\begin{gather*}
p_x = \Pfrom{x}{\tilde{X_1} = x+1} = \frac{f(x+1)w(x)}{3f(x)} \\
r_x = \Pfrom{x}{\tilde{X_1} = x} = \frac{w(x)}{3} \\
q_x = \Pfrom{x}{\tilde{X_1} = x-1} = \frac{f(x-1)w(x)}{3f(x)},
\end{gather*}
hence the expressions given in the Introduction.

\subsection{Two useful quantities}

To prove of the convergence of $\theta_{\infty}^{(k)}$, we will need estimates for the quantities
\begin{gather*}
\SumFW{x}{k} = \sum_{m \geq 1} \Pfrom{x}{\tilde{X}_{m-1}=k} \\
\SumFWStar{x}{k} = \sum_{m \geq 1} (m+1) \Pfrom{x}{\tilde{X}_{m-1}=k},
\end{gather*}
depending on the values of $k, x \in \N$. In practice, these estimates are best obtained through explicit computation; the expressions we get are given in the two following lemmas. We use the notations $\tilde{T}_y = \inf \lbrace t \geq 1: \tilde{X}_t=y \rbrace$ and $h(y)= y(y+1)(y+2)(y+3)(2y+3)$, for all $y \in \N$. (In this section, we mainly work on Markov processes, and use the letters $t$ and $T$ for associated times instead of trees.)

\begin{lem} \label{T: Values of SumFW}
Fix $k \geq 2$, $x \in \N$. We have the following equalities:
\begin{itemize}
\item if $x \leq k$,
\begin{gather*}
\SumFW{x}{k} = \frac{3}{10} (2k+3),
\end{gather*}
\item if $x>k$,
\begin{gather*}
\SumFW{x}{k} = \frac{3 h(k)}{10 h(x)}(2k+3).
\end{gather*}
\end{itemize}
\end{lem}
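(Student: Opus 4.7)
The plan is to recognize $\SumFW{x}{k}$ as the Green's function $G(x,k) := \sum_{m \geq 0} \Pfrom{x}{\tilde{X}_m = k}$ (via the reindexing $m' = m-1$), and then to combine a first-passage decomposition with an explicit harmonic function for the chain $\tilde X$. By the strong Markov property at the first hitting time $T_k := \inf\{m \geq 0 : \tilde{X}_m = k\}$, one has $G(x,k) = \Pfrom{x}{T_k < \infty} \cdot G(k,k)$ for every $x$. Since $\tilde{X}$ moves by at most one at each step and drifts to $+\infty$ almost surely (as a discrete analogue of a Bessel(7) process), any trajectory escaping to infinity from $x \leq k$ must pass through $k$, so $\Pfrom{x}{T_k < \infty} = 1$ whenever $x \leq k$. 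This already reduces the first case of the lemma to the computation of $G(k,k)$.

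For the case $x > k$, I would show that $\Pfrom{x}{T_k<\infty} = h(k)/h(x)$ by a standard martingale argument. The key claim is that $1/h$ is harmonic for $\tilde{X}$ on $\{x \geq 2\}$, which after substituting the explicit expressions of $p_x, r_x, q_x$ reduces to the polynomial identity
\begin{gather*}
\frac{x}{3(x+2)} + \frac{x(x+3)}{3(x+1)(x+2)} + \frac{x+3}{3(x+1)} = 1,
\end{gather*}
immediate after clearing denominators. Starting from $x > k \geq 2$, the process $(1/h(\tilde X_{m \wedge T_k}))_{m \geq 0}$ is then a bounded martingale. Applying optional stopping and letting $m\to\infty$ via dominated convergence (using $1/h(\tilde X_m) \to 0$ almost surely on the event $\{T_k = \infty\}$, since $\tilde X_m \to +\infty$ there) yields $1/h(x) = \Pfrom{x}{T_k<\infty}/h(k)$, as wanted.

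Finally, to evaluate $G(k,k) = (1 - \Pfrom{k}{\tilde T_k < \infty})^{-1}$ for the first return time $\tilde T_k := \inf\{m \geq 1 : \tilde X_m = k\}$, one conditions on the first step of the chain and plugs in the hitting probabilities obtained above, namely $\Pfrom{k-1}{T_k<\infty} = 1$ and $\Pfrom{k+1}{T_k<\infty} = h(k)/h(k+1)$, to obtain
\begin{gather*}
\Pfrom{k}{\tilde T_k < \infty} = r_k + q_k + p_k\,\frac{h(k)}{h(k+1)} = 1 - p_k\Bigl(1 - \frac{h(k)}{h(k+1)}\Bigr).
\end{gather*}
A short algebraic simplification, $1 - h(k)/h(k+1) = 10(k+2)/[(k+4)(2k+5)]$, followed by multiplication with $p_k = (k+4)(2k+5)/[3(k+2)(2k+3)]$, gives $1 - \Pfrom{k}{\tilde T_k < \infty} = 10/[3(2k+3)]$, hence $G(k,k) = 3(2k+3)/10$; combining with the two previous steps yields both formulas of the lemma. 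The main obstacle is identifying the correct harmonic function $1/h$ with $h(y) = y(y+1)(y+2)(y+3)(2y+3)$; once this guess is in hand, everything else reduces to routine first-step analysis and elementary polynomial manipulations.
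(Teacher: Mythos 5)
Your proof is correct and reaches the same formulas, but the mechanism you use to compute the hitting probabilities is genuinely different from the paper's. Both proofs start by rewriting $\SumFW{x}{k}$ as the Green's function $G(x,k)$ and applying the strong Markov property at the first passage time, reducing everything to $G(k,k)$ and $\Pfrom{x}{T_k<\infty}$. From there the paper invokes the classical birth-and-death escape-probability formula $\Pfrom{y+1}{\tilde T_y=\infty} = \bigl(\sum_{j\geq 0} K_{y+1,j}\bigr)^{-1}$ with $K_{y+1,j} = \prod_{z=y+1}^{y+j} q_z/p_z$, then evaluates the telescoping sum explicitly; the function $h$ emerges as a by-product of that computation. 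You instead posit $1/h$ as a harmonic function (equivalently, the scale function of the chain on $\{x\geq 2\}$), verify harmonicity by the displayed polynomial identity (which does check out: clearing denominators gives $x(x+1)+x(x+3)+(x+2)(x+3)=3(x+1)(x+2)$), and extract $\Pfrom{x}{T_k<\infty}=h(k)/h(x)$ by optional stopping applied to the bounded martingale $1/h(\tilde X_{m\wedge T_k})$. The martingale route is shorter once $h$ is known, but it requires guessing $h$ in advance, as you acknowledge; the paper's route is more mechanical but self-contained. Your final algebra for $G(k,k)$ via $\Pfrom{k}{\tilde T_k<\infty} = 1 - p_k\bigl(1-h(k)/h(k+1)\bigr)$ is also correct and matches the paper's value $\Pfrom{k}{\tilde T_k=\infty}=10/[3(2k+3)]$.

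One point that would need tightening for a self-contained proof: you assert that $\tilde X$ drifts to $+\infty$ a.s.\ (so that $\Pfrom{x}{T_k<\infty}=1$ for $x\leq k$, and so that $1/h(\tilde X_m)\to 0$ on $\{T_k=\infty\}$ in the dominated-convergence step), but you don't actually establish transience within the proposal. The paper gets it for free from $\sum_j K_{1,j}<\infty$. In your framework it can be recovered from the same martingale: by martingale convergence $1/h(\tilde X_{m\wedge T_k})$ converges a.s., and on $\{T_k=\infty\}$ a nearest-neighbour chain that does not go to $+\infty$ would oscillate between finitely many values infinitely often, forcing $1/h(\tilde X_m)$ to oscillate between distinct values, a contradiction. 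Adding a sentence to this effect would close the gap.
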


\begin{proof}
Fix $x,k \in \N$. First note that we can write $\SumFW{x}{k}$ as
\begin{gather*}
\SumFW{x}{k}
= \sum_{m \geq 0} \Pfrom{x}{\tilde{X}_m = k}
= \Efrom[sz]{x}{\sum_{m \geq 0} \ind{\tilde{X}_m=k}}
= \ind{x=k} + \Efrom[sz]{x}{\ind{\tilde{T}_k < \infty} \sum_{m \geq 0} \ind{\tilde{X}_{\tilde{T}_k+m}=k}}.
\end{gather*}
Now, applying the Markov property at the stopping time $\tilde{T}_k$ yields
\begin{gather*}
\SumFW{x}{k}
= \ind{x=k} + \Pfrom{x}{\tilde{T}_k < \infty} \Efrom[sz]{k}{ \sum_{m \geq 0} \ind{\tilde{X}_m=k}}.
\end{gather*}
For all $y \geq 0$, let
\begin{gather*}
K_{y+1,j} = \frac{q_{y+1} \ldots q_{y+j}}{p_{y+1} \ldots p_{y+j}} = \prod_{z=y+1}^{y+j} \frac{f(z-1)}{f(z+1)} = \frac{f(y)f(y+1)}{f(y+j)f(y+j+1)}.
\end{gather*}
Since $K_{1,j}$ is the general term of a converging series, the Markov chain $\tilde{X}$ is transient, and as a consequence, we have 
\begin{gather} \label{E: SumFW_x(k) using T_k}
\SumFW{x}{k}
= \ind{x=k} + \frac{\Pfrom{x}{\tilde{T}_k < \infty}}{\Pfrom{k}{\tilde{T}_k = \infty}}.
\end{gather}
To compute these quantities, it is enough to know the expression of $\Pfrom{y+1}{\tilde{T}_y = \infty}$ for all $y \geq k$, which is a well-known property of birth-and-death processes:
\begin{gather*}
\Pfrom{y+1}{\tilde{T}_y = \infty} = \frac{1}{\sum_{j \geq 0} K_{y+1,j}}
\end{gather*}
Computing the sum $\sum_{j \geq 0} K_{y+1,j}$ yields
\begin{gather} \label{E: P(no return to y from y+1)}
\Pfrom{y+1}{\tilde{T}_y = \infty} = \frac{10 (y+2)}{(y+4)(2y+5)}.
\end{gather}
As a consequence, we get the following results: 
\begin{itemize}
\item If $x<k$, then
\begin{gather*}
\Pfrom{x}{\tilde{T}_k < \infty} = 1.
\end{gather*}
\item If $x=k$, then
\begin{gather*}
\Pfrom{x}{\tilde{T}_k < \infty}
= 1-p_k \Pfrom{k+1}{\tilde{T}_k = \infty}
= \frac{6k-1}{3(2k+3)}.
\end{gather*}
\item If $x>k$, then
\begin{gather*}
\Pfrom{x}{\tilde{T}_k < \infty}
= \prod_{y=k}^{x-1} \Pfrom{y+1}{\tilde{T}_y < \infty}
= \frac{h(k)}{h(x)}.
\end{gather*}
\end{itemize}
Together with \eqref{E: SumFW_x(k) using T_k}, this completes the proof of the lemma.
\end{proof}

Note that the values we obtain can also be computed using the recurrence relations
\begin{gather} \label{E: Rec SumFW_1(k)}
\left\lbrace \begin{array}{l}
\SumFW{1}{1} = 1 + r_1 \SumFW{1}{1} + q_2 \SumFW{1}{2} \\
\SumFW{1}{k} = p_{k-1} \SumFW{1}{k-1} + r_k \SumFW{1}{k} + q_{k+1} \SumFW{1}{k+1} \qquad \forall k \geq 2
\end{array} \right.
\end{gather}
and, for all $k \in \N$,
\begin{gather} \label{E: Rec SumFW_x(k) in x}
\left\lbrace \begin{array}{l}
\SumFW{1}{k} = \ind{k=1} + r_1 \SumFW{1}{k} + p_1 \SumFW{2}{k} \\
\SumFW{x}{k} = \ind{k=x} + p_x \SumFW{x+1}{k} + r_x \SumFW{x}{k} + q_x \SumFW{x-1}{k} \qquad \forall x \geq 2,
\end{array} \right.
\end{gather}
which stem from the Markov property of $\tilde{X}$. Nevertheless, we would still have to go through part of the previous calculations to get the value of $\SumFW{1}{1}$. In the proof of the following lemma, we will find it easier to use this approach.

\begin{lem} \label{T: Values of SumFWStar}
Fix $k \geq 2$, $x \in \N$, and let $C_x = \frac{3}{14}((x+1)(x+2)-6)$. We have the following equalities:
\begin{itemize}
\item if $x<k$,
\begin{gather*}
\SumFWStar{x}{k} = \frac{3f(k)}{f(1)w(k)} - \frac{3 C_x}{10} (2k+3),
\end{gather*}
\item if $x \geq k$,
\begin{gather*}
\SumFWStar{x}{k} = \frac{3f(k)}{f(1)w(k)} - \frac{3}{10} (2k+3)\(C_x + 1-\frac{h(k)}{h(x)}\).
\end{gather*}
\end{itemize}
\end{lem}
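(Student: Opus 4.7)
The plan is to use first-step analysis in $x$, as the remark at the end of the proof of Lemma \ref{T: Values of SumFW} already suggests. Writing the $m=1$ term of $\SumFWStar{x}{k}$ separately and conditioning on $\tilde{X}_1$ in the remainder, then performing a one-step index shift which turns the weight $(m+1)$ into $(m+2) = \SumFWStar{\cdot}{k} + \SumFW{\cdot}{k}$, one obtains the inhomogeneous birth--death recurrence
\begin{gather*}
\SumFWStar{x}{k} = \ind{x=k} + \SumFW{x}{k} + p_x \SumFWStar{x+1}{k} + r_x \SumFWStar{x}{k} + q_x \SumFWStar{x-1}{k}, \qquad x \geq 2,
\end{gather*}
together with the adapted boundary relation at $x = 1$, where $q_1 = 0$. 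The forcing $\SumFW{x}{k}$ is now known in closed form by Lemma \ref{T: Values of SumFW}: it is constant on $\segs{1}{k}$ and of the form $\mathrm{const}/h(x)$ on $\segs{k+1}{\infty}$, which is precisely what will produce the two cases in the statement.

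To solve the recurrence, one hunts for a particular solution on each of the two regions by an ansatz matching the shape of the forcing. The natural building blocks are the constant $1$, the quadratic $C_x = \frac{3}{14}((x+1)(x+2)-6)$, and the rational function $h(k)/h(x)$, the last of which is (up to a multiplicative constant) the $P$-harmonic function of $\tilde{X}$ that arose in the identity $\Pfrom{x}{\tilde{T}_k < \infty} = h(k)/h(x)$ derived in the previous proof. Substituting this ansatz into the recurrence and using the explicit formulas for $p_x, r_x, q_x$ and $w$ reduces the determination of the coefficients to a finite check of polynomial identities in $x$ and $k$, performed separately on each of the two regions $x \leq k$ and $x \geq k$.

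The remaining free constants are then pinned down by two conditions: the normalization $\SumFWStar{1}{k} = 3 f(k)/(f(1) w(k))$, which follows from $\sum_m \P{m_\infty = m} = 1$ combined with the reformulation of $\P{m_\infty = m}$ at the end of Section \ref{S: First dist eqns}; and the matching at $x = k$ of the two pieces of the ansatz, ensuring consistency of both the value and the recurrence across the jump of the forcing. The main obstacle is purely algebraic: substituting the ansatz and carrying out the cancellations without error, especially at the interface $x=k$ where the two expressions have to be reconciled. No further probabilistic input is required beyond the harmonic identity $\Pfrom{x}{\tilde{T}_k < \infty} = h(k)/h(x)$ and the normalization above, both direct consequences of results already established in Lemma \ref{T: Values of SumFW} and Proposition \ref{T: Distribution of theta(infty,k)}.
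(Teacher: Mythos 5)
Your proposal reproduces the paper's argument for the bulk of the proof: the inhomogeneous birth--death recurrence for $\SumFWStar{x}{k}$ obtained by first-step analysis, the ansatz matching the piecewise form of the forcing $\SumFW{x}{k}$, and the interface matching at $x=k$ are exactly what the paper does in its third step. The genuine difference lies in how you obtain the normalization constant $\SumFWStar{1}{k} = 3f(k)/(f(1)w(k))$. You derive it directly from $\sum_{m\geq 1}\P{m_\infty=m}=1$, using the reformulation $\P{m_\infty=m}=\frac{f(1)w(k)}{3f(k)}(m+1)\Pfrom{1}{\tilde{X}_{m-1}=k}$ already in the text (this relies on $m_\infty^{(k)}<\infty$ a.s., which holds because $\pt{e}_k(\theta_\infty)$ is a single vertex of the tree). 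The paper instead proceeds in two preliminary steps: it first computes $\SumFWStar{1}{1}=9/2$ by studying $\tilde{X}$ conditioned on returning to $1$ infinitely often, determining its transition probabilities and invariant measure $\Pi$ to get $\Efrom{1}{\tilde{T}_1\mid\tilde{T}_1<\infty}=2$; it then propagates this value through a second recurrence in $k$ (with $x=1$ fixed) to reach $\SumFWStar{1}{k}$. Your normalization argument bypasses both of these steps and is shorter and more transparent; what it loses is the explicit information about the conditioned chain $\tilde{X}^{\ast}$, which the paper's approach produces as a byproduct. Both routes pin down the unique solution of the recurrence (given the $q_1=0$ boundary convention), so your proof is correct.
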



\begin{proof}
The first step of the proof consists in computing $\SumFWStar{1}{1}$. We will then obtain $\SumFWStar{x}{k}$ as the unique solution of recursive systems having this initial value. Note that since $\Pfrom{1}{\tilde{X}_0=1}=1$, we have
\begin{gather*}
\SumFWStar{1}{1}
= 2 + \sum_{m \geq 1} (m+2) \Pfrom{1}{\tilde{X}_m = 1}.
\end{gather*}
Let us rewrite the second term using the first return time in 1, as in the proof of the previous lemma:
\begin{align*}
\SumFWStar{1}{1}
&= 2 + \sum_{t \geq 1} \Pfrom{1}{\tilde{T}_1 = t} \sum_{m \geq t} (m+2) \Pfrom{1}{\tilde{X}_m=1 \mid \tilde{T}_1 = t} \\
&= 2 + \sum_{t \geq 1} \Pfrom{1}{\tilde{T}_1 = t} \sum_{m \geq 0} (m+t+2) \Pfrom{1}{\tilde{X}_m=1} \\
&= 2 + \SumFWStar{1}{1} \sum_{t \geq 1} \Pfrom{1}{\tilde{T}_1 = t} + \SumFW{1}{1} \sum_{t \geq 1} t \Pfrom{1}{\tilde{T}_1 = t} \\
&= 2 + \SumFWStar{1}{1} \Pfrom{1}{\tilde{T}_1 < \infty} + \SumFW{1}{1} \Efrom[sz]{1}{\tilde{T}_1 \ind{\tilde{T}_1 < \infty}}.
\end{align*}
Thus, we have
\begin{gather*}
\SumFWStar{1}{1}
= \frac{1}{\Pfrom{1}{\tilde{T_1} = \infty}} \( 2 + \frac{3}{2} \Efrom{1}{\tilde{T}_1 \mid \tilde{T}_1 < \infty} \Pfrom{1}{\tilde{T}_1 < \infty} \).
\end{gather*}
Using the value of $\Pfrom{1}{\tilde{T}_1 < \infty}$ obtained in the previous proof, we get
\begin{gather}
\SumFWStar{1}{1}
= \frac{3}{2} \( 2 + \frac{1}{2} \Efrom{1}{\tilde{T}_1 \mid \tilde{T}_1 < \infty}\) \label{E: SumFWStar_1(1)}.
\end{gather}
To work out the value of the above expectation, we study the process $\tilde{X}^{\ast}$ having the law of $\tilde{X}$ conditioned on returning to $1$ infinitely often. This process is a recurrent Markov chain whose transition probabilities can be computed explicitly. Indeed, letting
\begin{gather*}
p^{\ast}_x
:= \Pfrom{x}{\tilde{X}_1 = x+1 \mid \tilde{T}_1 < \infty} \\
r^{\ast}_x
:= \Pfrom{x}{\tilde{X}_1 = x \mid \tilde{T}_1 < \infty} \\
q^{\ast}_x
:= \Pfrom{x}{\tilde{X}_1 = x-1 \mid \tilde{T}_1 < \infty},
\end{gather*}
Bayes' law yields
\begin{align*}
p^{\ast}_x
&= \frac{\Pfrom{x}{\tilde{X}_1 = x+1} \Pfrom{x}{\tilde{T_1} < \infty \mid \tilde{X}_1 = x+1}}{\Pfrom{x}{\tilde{T}_1 < \infty}}
= \frac{p_x \Pfrom{x+1}{\tilde{T}_1 < \infty}}{\Pfrom{x}{\tilde{T}_1 < \infty}}, \\
r^{\ast}_x
&= \frac{\Pfrom{x}{\tilde{X}_1 = x} \Pfrom{x}{\tilde{T_1} < \infty \mid \tilde{X}_1 = x}}{\Pfrom{x}{\tilde{T}_1 < \infty}}
= \left\lbrace \begin{array}{l}
r_x \mbox{ if } x \neq 1 \\
\frac{r_1}{\Pfrom{1}{\tilde{T}_1 < \infty}} \mbox{ if } x=1,
\end{array} \right. \\
q^{\ast}_x
&= \frac{\Pfrom{x}{\tilde{X}_1 = x-1} \Pfrom{x}{\tilde{T_1} < \infty \mid \tilde{X}_1 = x-1}}{\Pfrom{x}{\tilde{T}_1 < \infty}}
= \left\lbrace \begin{array}{l}
\frac{q_x \Pfrom{x-1}{\tilde{T}_1 < \infty}}{\Pfrom{x}{\tilde{T}_1 < \infty}} \mbox{ if } x \neq 2 \\
\frac{q_2}{\Pfrom{2}{\tilde{T}_1 < \infty}} \mbox{ if } x=1.
\end{array} \right.
\end{align*}
Note that, for all $x \geq 2$,
\begin{gather*}
\Pfrom{x+1}{\tilde{T}_1 < \infty} = \Pfrom{x+1}{\tilde{T}_x < \infty} \Pfrom{x}{\tilde{T}_1 < \infty},
\end{gather*}
so we can again use equation \eqref{E: P(no return to y from y+1)}. Finally, we get $p^{\ast}_1 = \frac{1}{3}$, $r^{\ast}_1 = \frac{2}{3}$, $q^{\ast}_1 = 0$, and for all $x \geq 2$
\begin{gather*}
p^{\ast}_x = \frac{x}{3(x+2)} \\
r^{\ast}_x = \frac{x(x+3)}{3(x+1)(x+2)} \\
q^{\ast}_x = \frac{x+3}{3(x+1)}.
\end{gather*}
To get the value of $\Efrom{1}{\tilde{T}_1 \mid \tilde{T}_1 < \infty}$, it is now enough to compute the invariant measure $\Pi$ of $\tilde{X}^{\ast}$. We do so by using reversibility: the detailed balanced equation $\Pi(x) p^{\ast}_x = \Pi(x+1) q^{\ast}_{x+1}$ implies
\begin{gather*}
\frac{\Pi(x+1)}{\Pi(x)} = \left\lbrace \begin{array}{l}
\frac{x}{x+4} \mbox{ if } x \geq 2 \\
\frac{3}{5} \mbox{ if } x = 1.
\end{array}\right.
\end{gather*}
As a consequence,
\begin{gather*}
\sum_{x \geq 1} \Pi(x) = \Pi(1) \( 1 + \frac{3}{5} \sum_{x \geq 2} \frac{2 \times 3 \times 4 \times 5}{x(x+1)(x+2)(x+3)} \) = \Pi(1) \( 1 + \frac{3}{5} \times 120 \times \frac{1}{72} \),
\end{gather*}
so $\Pi$ is a probability measure if and only if $\Pi(1) = \frac{1}{2}$. This implies
\begin{gather*}
\Efrom{1}{\tilde{T}_1 \mid \tilde{T}_1 < \infty} = \frac{1}{\Pi(1)} = 2.
\end{gather*}
Injecting this value into \eqref{E: SumFWStar_1(1)} gives $\SumFWStar{1}{1} = \frac{9}{2}$.

For the second step, we keep $x=1$, and compute the values of $\SumFWStar{1}{k}$ for $k \in \N$. As above, we first shift indices and set the first term aside:
\begin{gather*}
\SumFWStar{1}{k}
= 2 \ind{k=1} + \sum_{m \geq 0} (m+3) \Pfrom{1}{\tilde{X}_{m+1}=k}.
\end{gather*}
Applying the Markov property at time $m$ in each of the terms gives the following recurrence relations:
\begin{itemize}
\item For $k=1$,
\begin{align*}
\SumFWStar{1}{1}
&= 2 + \sum_{m \geq 0} (m+3) \(r_1 \Pfrom{1}{\tilde{X}_m=1} + q_2 \Pfrom{1}{\tilde{X}_m=2}\) \\
&= 2 + r_1 \SumFWStar{1}{1} + q_2 \SumFWStar{1}{2} + \SumFW{1}{1}-1.
\end{align*}
\item For all $k \geq 2$,
\begin{align*}
\SumFWStar{1}{k}
&= \sum_{m \geq 0} (m+3) \(p_{k-1}\Pfrom{1}{\tilde{X}_m=k-1} + r_k\Pfrom{1}{\tilde{X}_m=k} + q_{k+1}\Pfrom{1}{\tilde{X}_m=k+1}\) \\
&= p_{k-1} (\SumFWStar{1}{k-1} + \SumFW{1}{k-1}) + r_k (\SumFWStar{1}{k} + \SumFW{1}{k}) + q_{k+1} (\SumFWStar{1}{k+1} + \SumFW{1}{k+1}) \\
&= p_{k-1} \SumFWStar{1}{k-1} + r_k \SumFWStar{1}{k} + q_{k+1} \SumFWStar{1}{k+1} + \SumFW{1}{k}.
\end{align*}
\end{itemize}
(Note that we have used implicitly the fact that $\SumFW{1}{k}$ verifies the similar system \eqref{E: Rec SumFW_1(k)}). Using the values obtained in Lemma \ref{T: Values of SumFW}, we get the recursive system
\begin{gather*}
\left\lbrace
\begin{array}{l}
\SumFWStar{1}{1} = \frac{9}{2} \\
\SumFWStar{1}{2} = \frac{1}{q_2} \( (1-r_1)\SumFWStar{1}{1} - \frac{5}{2} \) \\
\SumFWStar{1}{k+1} = \frac{1}{q_{k+1}} \( (1-r_k)\SumFWStar{1}{k} - p_{k-1} \SumFWStar{1}{k-1} - \frac{3}{10} (2k+3) \).
\end{array}
\right.
\end{gather*}
It is now easy to check that $\frac{3 f(k)}{f(1)w(k)}$ is also a solution of this system, and therefore is equal to $\SumFWStar{1}{k}$.

In the third and last step, we fix the value of $k$, and write recurrence relations for $\SumFWStar{x}{k}$, $x \in \N$. To this end, we again use the Markov property, but at time 1 (with the convention that $\SumFWStar{0}{k}=0$, to keep the setting general):
\begin{align*}
\SumFWStar{x}{k}
&= 2 \ind{x=k} + \sum_{m \geq 0} (m+3) \Pfrom{x}{\tilde{X}_{m+1}=k} \\
&= 2 \ind{x=k} + \sum_{m \geq 0} (m+3) (p_x \Pfrom{x+1}{\tilde{X}_m=k} + r_x \Pfrom{x}{\tilde{X}_{m+1}=k} + q_x \Pfrom{x-1}{\tilde{X}_m=k}) \\
&= 2 \ind{x=k} + p_x \SumFWStar{x+1}{k} + r_x \SumFWStar{x}{k} + q_x \SumFWStar{x-1}{k} + p_x \SumFW{x+1}{k} + r_x \SumFW{x}{k} + q_x \SumFW{x-1}{k} \\
&= \ind{x=k} + p_x \SumFWStar{x+1}{k} + r_x \SumFWStar{x}{k} + q_x \SumFWStar{x-1}{k} + \SumFW{x}{k}.
\end{align*}
This gives the system
\begin{gather*}
\left\lbrace \begin{array}{l}
\SumFWStar{1}{k} = \frac{3 f(k)}{f(1)w(k)} \\
\SumFWStar{x+1}{k} = \frac{1}{p_x} \( (1-r_x)\SumFWStar{x}{k} - q_x \SumFWStar{x-1}{k} - \SumFW{x}{k} - \ind{x=k}\).
\end{array} \right.
\end{gather*}
We first solve these equations for $x < k$, so that the last term is zero. The solution is of the form given in the lemma if and only if $C_x$ is such that
\begin{gather*}
\left\lbrace \begin{array}{l}
C_0 = C_1 = 0 \\
C_{x+1} = \frac{1}{p_x} ((1-r_x) C_x - q_x C_{x-1} + 1).
\end{array} \right.
\end{gather*}
This is indeed the case for $C_x = \frac{3}{14}((x+1)(x+2)-6)$. Now, for $x \geq k$, we seek a solution of the form
\begin{gather*}
\SumFWStar{x}{k} = \frac{3f(k)}{f(1)w(k)} - \frac{3 C_x}{10} (2k+3) - C'_{k,x}.
\end{gather*}
The recursive system can be translated into $C'_{k,k} = 0$, $C'_{k,k+1} = \frac{1}{p_x}$ and
\begin{gather*}
C'_{k,x+1} = \frac{1}{p_x} ((1-r_x)C'_{k,x} - q_x C'_{k,x-1}),
\end{gather*}
or equivalently
\begin{gather*}
p_x(C'_{k,x+1}-C'_{k,x}) = q_x (C'_{k,x}-C'_{k,x-1}).
\end{gather*}
Thus, for $x\geq k+1$, we get
\begin{align*}
C'_{k,x}
&= \sum_{y=k}^{x-1} \frac{q_{k+1} \ldots q_y}{p_{k+1} \ldots p_y} \frac{1}{p_k} \\
&= \sum_{y=k}^{x-1} \frac{f(k)f(k+1)}{f(y)f(y+1)} \frac{1}{p_k} \\
&= \frac{f(k)f(k+1)}{p_k} \frac{h(x)-h(k)}{10 h(k)h(x)}.
\end{align*}
Using the expressions of $f$, $h$ and $p_k$, we conclude that
\begin{align*}
&= \frac{(k+4)(2k+5)}{10 (k+2) p_k} \( 1-\frac{h(k)}{h(x)} \) \\
&= \frac{3(2k+3)}{10} \( 1-\frac{h(k)}{h(x)} \).
\end{align*}
This ends the proof.
\end{proof}

\subsection{Proof of the convergence}

We are now ready to give the proof of the convergence of $\theta_{\infty}^{(k)}$. We begin with the convergence of the labels $(X_{\infty,i}^{(k)})$ towards the Markov chain $\tilde{X}$.

\begin{prop} \label{T: First labels convergence}
Fix $r \in \N$. For any continuous bounded function $F$ from $\R^r$ into $\R$, we have
\begin{gather*}
\E{F(X_{\infty,1}^{(k)},\ldots,X_{\infty,r}^{(k)})} \cvg{k \rightarrow \infty}{} \Efrom{1}{F(\tilde{X}_0,\ldots,\tilde{X}_{r-1})}.
\end{gather*}
\end{prop}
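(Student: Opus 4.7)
The plan is to establish the pointwise convergence of the joint mass function of $(X_{\infty,1}^{(k)}, \ldots, X_{\infty,r}^{(k)})$ to that of $(\tilde{X}_0, \ldots, \tilde{X}_{r-1})$ under $\mathbb{P}_1$, and then invoke Scheff\'e's lemma: since the state space is countable and both sides are probability distributions, pointwise convergence of the mass functions automatically upgrades to convergence in total variation, which handles any bounded continuous $F$.

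Fix an admissible prefix $(x_1, \ldots, x_r)$ with $x_1 = 1$ (recall that $X_{\infty, 0}^{(k)} = 0$ and $X_{\infty, 1}^{(k)} = 1$ almost surely by construction of the path $(\pt{x}_{n,i})$). Summing the joint law from Proposition \ref{T: Distribution of theta(infty,k)} over the length $m \geq r$ and over all continuations of the walk from $x_r$ to $k$, then rewriting the weighted sum over paths as an expectation under the uniform random walk $\hat{X}$ starting from $x_r$, I obtain
\begin{gather*}
\P{(X_{\infty,1}^{(k)}, \ldots, X_{\infty,r}^{(k)}) = (x_1, \ldots, x_r)} = \frac{\prod_{i=1}^{r} w(x_i)}{3^r} \sum_{j \geq 0} (j+r+1) \, \Efrom{x_r}{\prod_{i=1}^{j} w(\hat{X}_i) \, \ind{\hat{X}_j = k}}.
\end{gather*}
The martingale change of measure with $M_j = \frac{f(\hat{X}_j)}{f(\hat{X}_0)} \prod_{i=0}^{j-1} w(\hat{X}_i)$ yields $\Efrom{x_r}{\prod_{i=1}^{j} w(\hat{X}_i) \ind{\hat{X}_j = k}} = \frac{f(x_r) w(k)}{w(x_r) f(k)} \Pfrom{x_r}{\tilde{X}_j = k}$, and after the reindexing $m = j+1$ together with the split $(m+r) = (m+1) + (r-1)$ the remaining sum equals $\SumFWStar{x_r}{k} + (r-1) \SumFW{x_r}{k}$. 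The pre-limit formula therefore reads
\begin{gather*}
\P{(X_{\infty,1}^{(k)}, \ldots, X_{\infty,r}^{(k)}) = (x_1, \ldots, x_r)} = \frac{f(x_r) w(k)}{3^r f(k)} \prod_{i=1}^{r-1} w(x_i) \cdot \bigl(\SumFWStar{x_r}{k} + (r-1) \SumFW{x_r}{k}\bigr).
\end{gather*}

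To pass to the limit, I apply Lemmas \ref{T: Values of SumFW} and \ref{T: Values of SumFWStar} in the regime $x_r < k$, giving $\SumFW{x_r}{k} = \frac{3(2k+3)}{10}$ and $\SumFWStar{x_r}{k} = \frac{3 f(k)}{f(1) w(k)} - \frac{3 C_{x_r}}{10}(2k+3)$. Since $f(k) \sim 2k^3$ while $w(k) \to 1$, the first term in $\SumFWStar{x_r}{k}$ dominates, and the $O(k)$ corrections are killed by the prefactor $w(k)/f(k) \asymp k^{-2}$, so that
\begin{gather*}
\frac{f(x_r) w(k)}{3^r f(k)} \prod_{i=1}^{r-1} w(x_i) \cdot \bigl(\SumFWStar{x_r}{k} + (r-1) \SumFW{x_r}{k}\bigr) \cvg{k \rightarrow \infty}{} \frac{f(x_r) \prod_{i=1}^{r-1} w(x_i)}{3^{r-1} f(1)}.
\end{gather*}
A direct application of the same martingale identity identifies this limit with $\Pfrom{1}{\tilde{X}_0 = 1, \tilde{X}_1 = x_2, \ldots, \tilde{X}_{r-1} = x_r}$, since on the event $\{\hat{X}_i = x_{i+1},\ 0 \leq i \leq r-1\}$ one has $M_{r-1} = (f(x_r)/f(1)) \prod_{i=1}^{r-1} w(x_i)$ while the uniform walk follows this prefix with probability $3^{-(r-1)}$. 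The main effort is bookkeeping: the careful derivation of the pre-limit formula and the tracking of sub-leading terms in the asymptotics; no further conceptual ingredient is required beyond Proposition \ref{T: Distribution of theta(infty,k)} and the explicit expressions of $\SumFW{x_r}{k}$ and $\SumFWStar{x_r}{k}$.
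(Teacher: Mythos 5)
Your proof is correct and relies on the same core ingredients as the paper (Proposition \ref{T: Distribution of theta(infty,k)}, the martingale change of measure by $M$, and the explicit formulas of Lemmas \ref{T: Values of SumFW} and \ref{T: Values of SumFWStar}), but it is packaged differently. The paper manipulates $\E{F(X_{\infty,1}^{(k)},\ldots,X_{\infty,r}^{(k)})}$ directly: it applies the Markov property at time $r-1$, reduces to the factor $\frac{f(1)w(k)}{3f(k)}\bigl(\SumFWStar{x}{k}+(r-1)\SumFW{x}{k}\bigr)$, shows it tends to $1$ uniformly over $x \leq r$, and concludes without ever having to recognise the limiting mass function as a probability law. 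You instead compute the pointwise limit of the joint mass function, identify it with the law of $(\tilde{X}_0,\ldots,\tilde{X}_{r-1})$ under $\mathbb{P}_1$ by a second use of the martingale identity, and invoke Scheff\'e's lemma to upgrade to total-variation convergence. Your version is slightly longer (one extra step of identification) but gives a slightly stronger conclusion (TV convergence of the finite-dimensional laws, so in fact bounded \emph{measurable} $F$ suffice); both routes are sound. One small slip: you write $w(k)/f(k) \asymp k^{-2}$, but since $f(k)\sim 2k^3$ and $w(k)\to 1$ the correct order is $k^{-3}$; this is harmless because the sub-leading terms are $O(k)$, and $O(k)\cdot O(k^{-3})\to 0$ just as well.
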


\begin{proof}
Let $k \geq r$. The computations of Section \ref{S: First dist eqns} show that
\begin{gather*}
\E{F(X_{\infty,1}^{(k)},\ldots,X_{\infty,r}^{(k)})}
= \sum_{m \geq 1} \frac{m+1}{3} \Efrom[sz]{1}{F(\hat{X}_0,\ldots,\hat{X}_{r-1}) \ind{\hat{X}_{m-1}=k} M_{m-1} \frac{f(1)w(k)}{f(k)}}.
\end{gather*}
Since $k \geq r$, the term $\ind{\hat{X}_{m-1}=k}$ is zero for $m < r$. Applying the Markov property allows us to write $\E{F(X_{\infty,1}^{(k)},\ldots,X_{\infty,r}^{(k)})}$ as
\begin{gather*}
\sum_{m \geq r} \frac{m+1}{3} \Efrom[sz]{1}{F(\hat{X}_0,\ldots,\hat{X}_{r-1}) \frac{f(1)}{f(\hat{X}_{r-1})} M_{r-1} \Efrom[sz]{\hat{X}_{r-1}}{\ind{\hat{X}'_{m-r}=k} M'_{m-r} \frac{f(\hat{X}_{r-1})w(k)}{f(k)}}},
\end{gather*}
where $\hat{X}'$ is an independent copy of the process $\hat{X}$, and for all $j \in \N$
\begin{gather*}
M'_j = \frac{f(\hat{X}'_j)}{f(\hat{X}'_0)} \prod_{i=0}^{j-1} w(\hat{X}'_i).
\end{gather*}
Therefore, $\E{F(X_{\infty,1}^{(k)},\ldots,X_{\infty,r}^{(k)})}$ is equal to
\begin{gather*}
\Efrom[sz]{1}{M_{r-1} F(\hat{X}_0,\ldots,\hat{X}_{r-1}) \frac{f(1) w(k)}{3 f(k)} \sum_{m \geq 0} (m+r+1) \Efrom[sz]{\hat{X}_{r-1}}{\ind{\hat{X}'_m=k} M'_m}}.
\end{gather*}
Since $\hat{X}_{r-1} \leq r$ $\as$, we now have to estimate the sum $\sum_{m \geq 0} (m+r+1) \Efrom[sz]{x}{\ind{\hat{X}'_m=k} M'_m}$, for all $x \leq r$. We first express this quantity using $\SumFWStar{x}{k}$ and $\SumFW{x}{k}$:
\begin{align*}
\sum_{m \geq 0} (m+r+1) \Efrom[sz]{x}{\ind{\hat{X}'_m=k} M'_m}
&= \sum_{m \geq 0} (m+r+1) \Pfrom{x}{\tilde{X}'_m=k} \\
&= \SumFWStar{x}{k} + (r-1) \SumFW{x}{k}.
\end{align*}
Now, the results of Lemmas \ref{T: Values of SumFW} and \ref{T: Values of SumFWStar} yield
\begin{gather*}
\sum_{m \geq 0} (m+r+1) \Efrom[sz]{x}{\ind{\hat{X}'_m=k} M'_m}
= \frac{3f(k)}{f(1)w(k)} + (r-1-C_x) \frac{3}{10} (2k+3).
\end{gather*}
As a consequence, we have
\begin{gather*}
\frac{f(1) w(k)}{3 f(k)} \sum_{m \geq 0} (m+r+1) \Efrom[sz]{x}{\ind{\hat{X}'_m=k} M'_m} \cvg{k \rightarrow \infty}{} 1
\end{gather*}
uniformly in $x \leq r$, hence the result.
\end{proof}

Note that we have only used part of the results of Lemmas \ref{T: Values of SumFW} and \ref{T: Values of SumFWStar} (namely, the case where $x \leq k$). The remaining expressions will play a role in the proof of the joint convergence.

The convergence of $\theta_{\infty}^{(k)}$ towards $\ora{\theta_{\infty}}$ can now be obtained by putting together the results of Proposition \ref{T: Distribution of theta(infty,k)} and Proposition \ref{T: First labels convergence}. Indeed, letting $I^{(k)}$ denote the unique index $i$ such that $\tau'_{\infty,i}$ is infinite, conditionally on $I^{(k)} \geq r$, we have that:
\begin{itemize}
\item The points $\spine_i(\theta_{\infty}^{(k)})$ and $\pt{x}_{\infty,i}^{(k)}$ are the same for all $i \leq r$, hence the equalities $R_i(\theta_{\infty}^{(k)}) = \tau_{\infty,i}^{(k)}$ and $L_i(\theta_{\infty}^{(k)}) = (\tau'_{\infty,i})^{(k)}$ for all $i < r$.
\item As a consequence, $(S_i(\theta_{\infty}^{(k)}))_{1 \leq i \leq r}$ converges in distribution to $(\tilde{X}_i)_{0 \leq i \leq r-1}$ for $\tilde{X}_0=1$.
\item Conditionally on $(S_i(\theta_{\infty}^{(k)}))_{0 \leq i < r}$, the subtrees $L_i(\theta_{\infty}^{(k)})$, $0 \leq i < r$ and $R_i(\theta_{\infty}^{(k)})$, $1 \leq i < r$ are independent random variables, with respective distributions $\GWgeom{S_i(\theta_{\infty}^{(k)})}$ and $\GWgeom{S_i(\theta_{\infty}^{(k)})}^+$.
\end{itemize}
Since
\begin{gather*}
\P{I^{(k)} < r} = \E[sz]{\frac{r}{m_{\infty}^{(k)}+1}} \leq \frac{r}{k+1} \cvg{k \rightarrow \infty}{} 0,
\end{gather*}
this gives the desired convergence.

\section{Joint convergence of $(\theta_{\infty}^{(k)},\theta_{\infty}^{(-k+1)})$} \label{S: Joint CV of the trees}

\subsection{Explicit expressions for the joint distribution} \label{S: Bi-marked theta(n)}

As in the previous section, we first fix $k$, and use the convergence of $(\theta_n^{(k)},\theta_n^{(-k+1)})$ to study $(\theta_{\infty}^{(k)},\theta_{\infty}^{(-k+1)})$. Let $n \in \N \cup \{\infty\}$. We introduce some new notation, summed-up in Figure \ref{F: Notation on theta_n bi-rerooted}. To simplify what follows, we write $\pt{e}_0$, $\pt{e}_k$, $\pt{e}_{-k+1}$ and $\pt{v}_k$ instead of $\pt{e}_0(\theta_n)$, $\pt{e}_k(\theta_n)$, $\pt{e}_{-k+1}(\theta_n)$ and $\pt{v}_k(\theta_n)$.

We first deal with the branches between $\pt{e}_0$, $\pt{e}_k$ and $\pt{e}_{-k+1}$. Let $a_n = d_n (\pt{v}_k, \pt{e}_k)$, $b_n = d_n (\pt{v}_k, \pt{e}_{-k+1})$ and $c_n = d_n (\pt{e}_0, \pt{v}_k)$, where $d_n$ denotes the graph-distance on $\theta_n$. Let $\pt{x}_{n,0}, \ldots, \pt{x}_{n,a_n}$ be the vertices on the path from $\pt{e}_k$ to $\pt{v}_k$, $\pt{y}_{n,0}, \ldots, \pt{y}_{n,b_n}$ the ones on the path from $\pt{e}_{-k+1}$ to $\pt{v}_k$, and $\pt{z}_{n,0}, \ldots, \pt{z}_{n,c_n}$ the ones on the path from $\pt{v}_k$ to $\pt{e}_0$. For the corresponding labels, we use capital letters: $X_{n,i} = l_n^{(k)} (\pt{x}_{n,i})$, $Y_{n,i} = l_n^{(k)} (\pt{y}_{n,i})$ and $Z_{n,i} = l_n^{(k)} (\pt{z}_{n,i})$ for all $i$.

We now add notation for the subtrees which are grafted on these branches. Again, we use the orders $\prec$ and $<$ on the vertices of $\theta_n$ in these definitions, even if we think of these trees as subtrees of $\theta_n^{(k)}$ (in particular, they inherit the labels $l_n^{(k)}$).
\begin{itemize}
\item For all $i \in \{1, \ldots, a_n+c_n\}$, let $\tau_{n,i}$ be the subtree containing the vertices $\pt{v}$ such that :
\begin{itemize}
\item if $i \leq a_n$, then $\pt{x}_{n,i} \leq \pt{v} < \pt{x}_{n,i-1}$,
\item if $a_n+1 \leq i \leq a_n+c_n$, then $\pt{z}_{n,i-a_n} \leq \pt{v} < \pt{z}_{n,i-a_n-1}$.
\end{itemize}
\item For all $i \in \{1,\ldots, b_n+c_n\}$, let $\tau'_{n,i}$ be the subtree containing the vertices $\pt{v}$ such that :
\begin{itemize}
\item if $i \leq b_n$, either $\pt{v}=\pt{y}_{n,i}$, or $\pt{y}_{n,i} \prec \pt{v}$, $\pt{y}_{n,i-1} < \pt{v}$ and $\pt{y}_{n,i-1} \nprec \pt{v}$,
\item if $b_n+1 \leq i \leq b_n + c_n$, either $\pt{v}=\pt{z}_{n,i-b_n}$, or $\pt{z}_{n,i-b_n} \prec \pt{v}$, $\pt{z}_{n,i-b_n-1} < \pt{v}$ and $\pt{z}_{n,i-b_n-1} \nprec \pt{v}$.
\end{itemize}
\item For all $i \in \{0,\ldots,a_n\}$, let $\ol{\tau}_{n,i}$ be the subtree containing the vertices $\pt{v}$ such that :
\begin{itemize}
\item if $i=0$, then $\pt{x}_0 \preceq \pt{v}$,
\item otherwise, either $\pt{v}=\pt{x}_{n,i}$, or $\pt{x}_{n,i} \prec \pt{v}$, $\pt{x}_{n,i-1} < \pt{v}$ and $\pt{x}_{n,i-1} \nprec \pt{v}$.
\end{itemize}
\item For all $i \in \{0,\ldots,b_n-1\}$, let $\ol{\tau}'_{n,i}$ be the subtree containing the vertices $\pt{v}$ such that :
\begin{itemize}
\item if $i = 0$, then $\pt{y}_0 \preceq \pt{v}$.
\item otherwise, $\pt{y}_{n,i} \leq \pt{v} < \pt{y}_{n,i-1}$,
\end{itemize}
\end{itemize}
As in section \ref{S: First dist eqns}, for all these variables, there should be an exponent $^{(k)}$ in the notation, but we omit this precision as long as $k$ remains constant.

\begin{figure}[t]
\begin{center}
\includegraphics{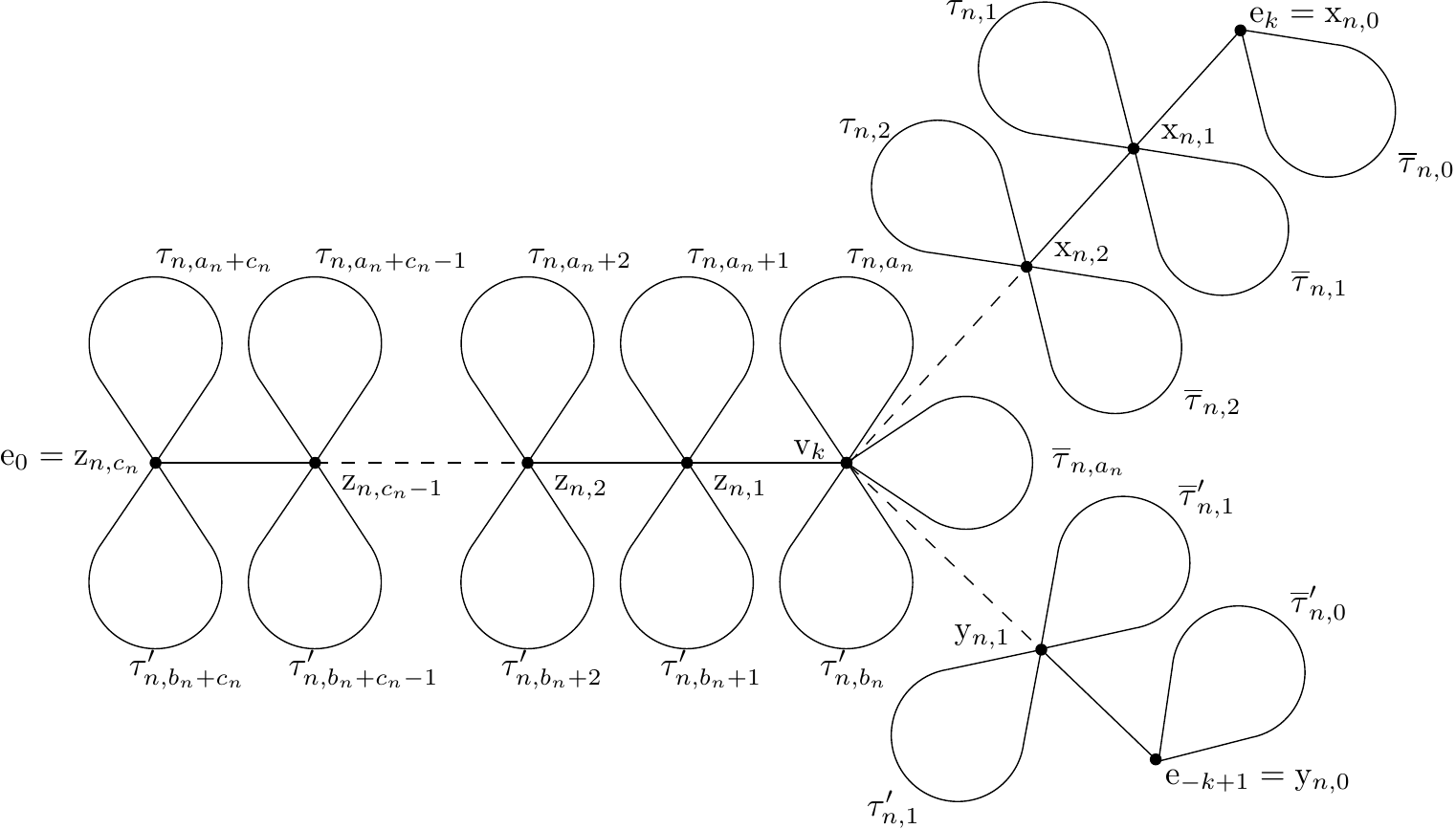}
\caption{Notation for the vertices and subtrees of $\theta_n$, with distinguished points $\pt{e}_k$ and $\pt{e}_{-k+1}$.}
\label{F: Notation on theta_n bi-rerooted}
\end{center}
\end{figure}

Fix $a,b,c,N,N' \geq 0$, $\ul{t} = (t_1,\ldots,t_{a+c}) \in \lforests{a+c}{N}$ and $\ul{t'} = (t'_1,\ldots,t'_{b+c}) \in \lforests{b+c}{N'}$ such that:
\begin{itemize}
\item the root of $t_1$ has label $1$, and all labels in $\ul{t}$ are positive,
\item the root of $t'_1$ has label $2$, and all labels in $\ul{t'}$ are greater than $1$,
\item for all $i \leq c$, the labels of the roots of $t_{a+c-i}$ and $t'_{b+c-i}$ are the same.
\end{itemize}
Let
\begin{gather*}
P_n^{(k)} (a,b,c,\ul{t},\ul{t'}) = \P{a_n=a,b_n=b,c_n=c,(\tau_1, \ldots, \tau_{a+c})=\ul{t}, (\tau'_1, \ldots, \tau'_{b+c})=\ul{t'}}.
\end{gather*}
We are once again interested in the behaviour of $P_n^{(k)} (a,b,c,\ul{t},\ul{t'})$ as $n \rightarrow \infty$, for fixed $k$.

\begin{lem}
Using the above notation, we have
\begin{gather*}
P_n^{(k)} (a,b,c,\ul{t},\ul{t'})
\sim_{n \rightarrow \infty} \frac{a+b+1}{6^{a+b} 12^{c+N+N'}}.
\end{gather*}
\end{lem}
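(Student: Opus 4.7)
The plan is to mimic the counting argument of the proof of Proposition \ref{T: Distribution of theta(infty,k)}, adapted to the bi-marked setting. Since $\theta_n$ is uniform in $\ltrees{n}(0)$,
\begin{gather*}
P_n^{(k)}(a,b,c,\ul{t},\ul{t'}) = \frac{N_n(a,b,c,\ul{t},\ul{t'})}{\card{\ltrees{n}(0)}},
\end{gather*}
where $N_n(\cdot)$ counts the labeled trees in $\ltrees{n}(0)$ whose bi-marked decomposition is the prescribed one. The key bijective observation is this: once we have fixed the three backbone paths $\pt{x},\pt{y},\pt{z}$ (whose labels are entirely determined by the root labels in $\ul{t}$ and $\ul{t'}$, using the compatibility condition along the common $\pt{z}$ portion) together with the constrained forests $\ul{t},\ul{t'}$, the only remaining freedom is in the collection
\begin{gather*}
\ol{F} = (\ol{\tau}_{n,0},\ldots,\ol{\tau}_{n,a},\ol{\tau}'_{n,b-1},\ldots,\ol{\tau}'_{n,0}),
\end{gather*}
which, taken in this clockwise order, is a well-labeled plane forest with $a+b+1$ trees and a total of $n-(a+b+c)-N-N'$ edges.

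The step requiring attention is the claim that $\ol{F}$ is subject to no label constraint beyond the well-labeling convention and the prescribed root labels inherited from the backbone. The subtrees making up $\ol{F}$ occupy precisely the DFS region of $\theta_n$ strictly between $\pt{e}_k$ and $\pt{e}_{-k+1}$, in the clockwise order. The defining properties of $\pt{e}_k$ (the first corner of label $-k$ after the root) and of $\pt{e}_{-k+1}$ (the last corner of label $-k+1$ before the root) translate, via $l_n^{(k)}=l_n+k$, precisely into the positivity assumptions that $\ul{t}$ has labels $\geq 1$ and $\ul{t'}$ has labels $\geq 2$ that are stated in the lemma; in the intermediate region occupied by $\ol{F}$, no such restriction is imposed. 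Granting this, $N_n(a,b,c,\ul{t},\ul{t'}) = \mathcal{F}_{a+b+1,\,n-(a+b+c)-N-N'}$.

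It then suffices to insert the closed-form expressions \eqref{E: Number of ltrees} and \eqref{E: Number of lforests} into this ratio and let $n\to\infty$. Writing $K=a+b+c+N+N'$, Stirling's formula yields $\binom{2n+1}{n}\sim 2\cdot 4^n/\sqrt{\pi n}$ and $\binom{2(n-K)+a+b+1}{n-K}\sim 2\cdot 4^{n-K}\cdot 2^{a+b}/\sqrt{\pi n}$. The $n$-dependent prefactors cancel, and the identity $2^{a+b}/12^{a+b}=1/6^{a+b}$ gives
\begin{gather*}
P_n^{(k)}(a,b,c,\ul{t},\ul{t'}) \sim \frac{(a+b+1)\cdot 2^{a+b}}{12^{K}} = \frac{a+b+1}{6^{a+b}\cdot 12^{c+N+N'}},
\end{gather*}
as claimed. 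The only real obstacle in this plan is the bijective decomposition of the previous paragraph, and in particular verifying the absence of label constraints on $\ol{F}$; once it is established, the asymptotic analysis is essentially identical to that of Section \ref{S: First dist eqns}.
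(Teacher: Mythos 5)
Your argument is correct and follows the same route as the paper: identify the remaining freedom after fixing the bi-marked decomposition as a well-labeled forest with $a+b+1$ trees and $n-(a+b+c+N+N')$ edges, then apply \eqref{E: Number of ltrees}, \eqref{E: Number of lforests} and Stirling. One small remark: the probability $P_n^{(k)}$ is implicitly conditioned on $\min l_n\leq -k$ (as it must be for the decomposition variables to be defined), so the exact identity has denominator $\card{\{(T,l)\in\ltrees{n}(0):\min l\leq -k\}}$ rather than $\card{\ltrees{n}(0)}$; since these are asymptotically equivalent, your computation still gives the correct limit, but the equality you write should be an asymptotic one.
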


\begin{proof}
Recall that $\mathcal{F}_{m,n'}$ denotes the number of well-labeled forests with $m$ trees, $n'$ edges and prescribed root labels. Since $\theta_n$ in uniform in $\ltrees{n}(0)$, we have
\begin{gather*}
P_n^{(k)} (a,b,c,\ul{t},\ul{t'})
= \frac{\mathcal{F}_{a+b+1,n-(a+b+c+N+N')}}{\card{\{(T,l) \in \ltrees{n}(0): \min_{V(T)} l \leq -k\}}}.
\end{gather*}
Using equations \eqref{E: Number of ltrees} and \eqref{E: Number of lforests} yields
\begin{align*}
\mathcal{F}_{a+b+1,n-(a+b+c+N+N')}
= &\frac{3^{n-(a+b+c+N+N')} (a+b+1)}{2n+1-(a+b+2c+2N+2N')} \\ & \qquad \binom{2n+1-(a+b+2c+2N+2N')}{n-(a+b+c+N+N')},
\end{align*}
and
\begin{gather*}
P_n^{(k)} (a,b,c,\ul{t},\ul{t'})
\sim_{n \rightarrow \infty} \frac{a+b+1}{3^{a+b+c+N+N'}} \binom{2n+1}{n}^{-1} \binom{2n+1-(a+b+2c+2N+2N')}{n-(a+b+c+N+N')}.
\end{gather*}
Stirling's formula now gives
\begin{gather*}
P_n^{(k)} (a,b,c,\ul{t},\ul{t'})
\sim_{n \rightarrow \infty} \frac{a+b+1}{3^{a+b+c+N+N'}2^{a+b+2c+2N+2N'}},
\end{gather*}
hence the lemma.
\end{proof}

Recall that for all $m,x,x' \in \N$, $\ltrees{m}^+(x)$ is the set of the labeled trees $(T,l) \in \ltrees{m}$ such that $l>0$ and the root of $T$ has label $x$, and $\mathcal{M}^+_{m, x \rightarrow x'}$ is the set of the walks $(x_1, \ldots, x_m) \in \N^{m+1}$ such that $x_0=x$, $x_m = x'$ and for all $i \leq m-1$, $\abs{x_{i+1} - x_i} \leq 1$. Similarly, we let $\ltrees{m}^{>1}(x)$ be the set of the labeled trees $(T,l) \in \ltrees{m}^+(x)$ such that $l>1$, and $\mathcal{M}^{>1}_{m, x \rightarrow x'}$ be the set of the walks $(x_1, \ldots, x_m) \in \mathcal{M}^+_{m, x \rightarrow x'}$ such that $x_0, \ldots, x_m >1$. Also recall that $\mu_{(x_0,\ldots,x_m)}$ denotes the distribution of a ``uniform infinite'' forest with root labels $x_0,\ldots,x_m$.

For all $a,b,c,k' \geq 1$, $\ul{x} \in \mathcal{M}_{a, 1\rightarrow k'}^+$, $\ul{y} \in \mathcal{M}_{b, 2\rightarrow k'}^{>1}$, $\ul{z} \in \mathcal{M}_{c+1, k'\rightarrow k}^{>1}$, let $A_{\infty}^{(k)}(a,b,c,\ul{x},\ul{y},\ul{z})$ denote the event:
\begin{gather*}
a_{\infty}^{(k)}=a, b_{\infty}^{(k)}=b, c_{\infty}^{(k)}=c, \quad (X_{\infty,1}^{(k)},\ldots,X_{\infty,a}^{(k)})=\ul{x}, \\ (Y_{\infty,1}^{(k)},\ldots,Y_{\infty,b}^{(k)})=\ul{y}, \quad \quad (Z_{\infty,0}^{(k)},\ldots,Z_{\infty,c}^{(k)})=\ul{z}.
\end{gather*}

\begin{cor} \label{T: Distribution of theta(infty,k) with two marked points}
For all $a,b,c,k' \geq 1$, $\ul{x} \in \mathcal{M}_{a, 1\rightarrow k'}^+$, $\ul{y} \in \mathcal{M}_{b, 2\rightarrow k'}^{>1}$, $\ul{z} \in \mathcal{M}_{c+1, k'\rightarrow k}^{>1}$, we have
\begin{gather*}
\P[sz]{A_{\infty}^{(k)} (a,b,c,\ul{x},\ul{y},\ul{z})}
= \frac{a+b+1}{3^{a+b+c}} \(\prod_{i=1}^a w(x_i)\) \(\prod_{i=1}^b w(y_i-1)\) \(\prod_{i=1}^{c+1} w(z_i)w(z_i-1)\).
\end{gather*}
Moreover, conditionally on $A_{\infty}^{(k)} (a,b,c,\ul{x},\ul{y},\ul{z})$, with the conventions $x_0=y_0=0$:
\begin{itemize}
\item The forests $(\tau_{\infty,i})_{1 \leq i \leq a+c}$, $(\tau'_{\infty,i})_{1 \leq i \leq b+c}$ and $(\ol{\tau}_{\infty,0},\ldots,\ol{\tau}_{\infty,a},\ol{\tau}'_{\infty,0},\ldots,\ol{\tau}'_{\infty,b-1})$ are independent.
\item The trees $\tau_{\infty,i}$, $1 \leq i \leq a+c$ are independent random variables, respectively distributed according to $\GWgeom{x_i}^+$, $1 \leq i \leq a$ and $\GWgeom{z_{a+i}}^+$, $a+1 \leq i \leq a+c$.
\item The trees $\tau'_{\infty,i}$, $1 \leq i \leq b+c$ are independent random variables, obtained by adding $1$ to the labels of trees distributed according to $\GWgeom{y_i-1}^+$, $1 \leq i \leq b$ and $\GWgeom{z_{b+i}}^+$, $b+1 \leq i \leq a+c$, respectively.
\item The forest $(\ol{\tau}_{\infty,0},\ldots,\ol{\tau}_{\infty,a},\ol{\tau}'_{\infty,0},\ldots,\ol{\tau}'_{\infty,b-1})$ follows the distribution $\mu_{(0,x_1,\ldots,x_a,0,y_1,\ldots,y_{b-1})}$.
\end{itemize}
\end{cor}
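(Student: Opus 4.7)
The plan is to adapt the argument of Proposition \ref{T: Distribution of theta(infty,k)} to the two-marked-point setting. By the same continuity argument that gave Lemma 1.5, the joint convergence $(\theta_n^{(k)}, \theta_n^{(-k+1)}) \cvg{n \to \infty}{} (\theta_\infty^{(k)}, \theta_\infty^{(-k+1)})$ transfers the asymptotic shape probability
\begin{equation*}
P_n^{(k)}(a,b,c,\ul{t},\ul{t'}) \sim_{n \to \infty} \frac{a+b+1}{6^{a+b}\, 12^{c+N+N'}}
\end{equation*}
from the preceding lemma to an exact identity for the bi-rerooted infinite tree.

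To compute $\P{A_\infty^{(k)}(a,b,c,\ul{x},\ul{y},\ul{z})}$, one sums this expression over all forests $(\ul{t}, \ul{t'})$ whose root labels match $(\ul{x}, \ul{y}, \ul{z})$ and which satisfy the positivity (resp.\ $>1$) constraints. Crucially the sum factorizes over subtrees: for each $\tau_i$ with positive labels and root label $r$, equation \eqref{E: w [Cha-Dur]} gives $\sum_{t \in \ltrees{}^+(r)} \frac{1}{12^{|t|}} = 2 w(r)$, and for each $\tau'_i$ with labels $>1$ and root label $r$, the label-shift $l \mapsto l-1$ reduces to the same identity and produces the factor $2 w(r-1)$. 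The resulting powers of $2$ contract against $6^{a+b} 12^c$ in the denominator to give $3^{a+b+c}$, and collecting the $w$ factors according to which branch each subtree is attached to yields the stated product formula.

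Independence and the conditional distributions of the families $(\tau_{\infty,i})$ and $(\tau'_{\infty,i})$ then follow immediately from the product structure, using $\GWgeom{r}^+(\{t\}) = \frac{1}{2 w(r) 12^{|t|}}$. For the outer forest $(\ol{\tau}_{\infty,0}, \ldots, \ol{\tau}_{\infty,a}, \ol{\tau}'_{\infty,0}, \ldots, \ol{\tau}'_{\infty,b-1})$, one repeats the final paragraph of the proof of Proposition \ref{T: Distribution of theta(infty,k)}: conditionally on the rest of $\theta_n^{(k)}$, it is a uniform labeled forest on $a+b+1$ trees with the prescribed root labels $(0,x_1,\ldots,x_a,0,y_1,\ldots,y_{b-1})$ and a deterministic, diverging number of edges, and its $n \to \infty$ limit is exactly the distribution $\mu_{(0,x_1,\ldots,x_a,0,y_1,\ldots,y_{b-1})}$.

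The main obstacle I foresee is purely combinatorial rather than conceptual: the vertex $\pt{v}_k$ lies on all three branches and is the root of the last $\tau$ subtree of the $x$-branch, the last $\tau'$ subtree of the $y$-branch, and (effectively) the first $\tau$ and $\tau'$ subtrees of the $z$-branch. Matching each subtree of $\theta_\infty^{(k)}$ to its factor in the product formula, in particular correctly apportioning the subtrees rooted at $\pt{v}_k$ between the $x$/$y$ and the $z$ products, is what produces the ``extra'' pair of factors $w(z_1) w(z_1-1)$ (the $i=1$ term of the $z$-product) that a naive count of the subtrees grafted along the three branches would miss.
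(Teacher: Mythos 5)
Your proposal is correct and matches the paper's proof essentially verbatim: take the exact value $P_\infty^{(k)}(a,b,c,\ul{t},\ul{t'})$ given by the preceding lemma, sum over all compatible forests using the Chassaing--Durhuus identity (together with the label-shift $\card{\ltrees{m}^{>1}(x)}=\card{\ltrees{m}^+(x-1)}$ for the ``$>1$'' subtrees) to factorize, and then read the conditional laws of the $\tau$, $\tau'$ subtrees off the product structure and handle the outer $(\ol{\tau},\ol{\tau}')$ forest exactly as in Proposition~\ref{T: Distribution of theta(infty,k)}. The combinatorial point you flag around $\pt{v}_k$ is indeed the only delicate spot, though note that in the paper's indexing the subtrees rooted at $\pt{v}_k$ are $\tau_{\infty,a}$ and $\tau'_{\infty,b}$, which already contribute the factors $w(x_a)$ and $w(y_b-1)$ through the $x$- and $y$-products; so the issue is matching the $z$-product's range against that bookkeeping, not $\pt{v}_k$ ``producing'' a distinct extra pair of factors.
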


\begin{proof}
We have
\begin{align*}
\P[sz]{A_{\infty}^{(k)} (a,b,c,\ul{x},\ul{y},\ul{z})}
= \frac{a+b+1}{6^{a+b}12^c}
& \( \prod_{i=1}^a \sum_{n_i \geq 0} \frac{1}{12^{n_i}} \card{\ltrees{n_i}^+(x_i)} \)
  \( \prod_{i=1}^b \sum_{n_i \geq 0} \frac{1}{12^{n_i}} \card{\ltrees{n_i}^{>1}(y_i)} \) \\
& \( \prod_{i=1}^{c+1} \sum_{n_i \geq 0} \frac{1}{12^{n_i}} \card{\ltrees{n_i}^+(z_i)} \)
  \( \prod_{i=1}^{c+1} \sum_{n_i \geq 0} \frac{1}{12^{n_i}} \card{\ltrees{n_i}^{>1}(z_i)} \).
\end{align*}
Using equation \eqref{E: w [Cha-Dur]} and the fact that $\card{\ltrees{m}^{>1}(x)} = \card{\ltrees{m}^+(x-1)}$ for all $m,x \in \N$, this gives
\begin{align*}
\P[sz]{A_{\infty}^{(k)} (a,b,c,\ul{x},\ul{y},\ul{z})}
&= \frac{a+b+1}{6^{a+b}12^c} \(\prod_{i=1}^a 2 w(x_i)\) \(\prod_{i=1}^b 2 w(y_i-1)\) \(\prod_{i=1}^{c+1} 4 w(z_i)w(z_i-1)\) \\
&= \frac{a+b+1}{3^{a+b+c}} \(\prod_{i=1}^a w(x_i)\) \(\prod_{i=1}^b w(y_i-1)\) \(\prod_{i=1}^{c+1} w(z_i)w(z_i-1)\),
\end{align*}
hence the first part of the Lemma. The conditional distributions of the trees $\tau_{\infty,i}$, $\tau'_{\infty,i}$, $\ol{\tau}_{\infty,i}$ and $\ol{\tau}'_{\infty,i}$ are then obtained exactly as in the proof of Proposition \ref{T: Distribution of theta(infty,k)}.
\end{proof}

\subsection{Proof of the joint convergence}

As in Section \ref{S: First convergence}, the main step of the proof of the convergence is to show the convergence of the labels on the branches $\pt{x}_{\infty,i}^{(k)}$, $i \geq 1$ and $\pt{y}_{\infty,i}^{(k)}$, $i \geq 1$. Fix $r \in \N$. For all $k \in \N$, and for all continuous bounded functions $F$, $G$ from $\R^r$ into $\R$, we let
\begin{gather*}
\mathcal{E}_k(F,G) := \E[sz]{F(X_{\infty,1}^{(k)}, \ldots, X_{\infty,r}^{(k)}) G(Y_{\infty,1}^{(k)}, \ldots, Y_{\infty,r}^{(k)}) \ind{a_{\infty}^{(k)}, b_{\infty}^{(k)} \geq r}}.
\end{gather*}

\begin{lem} \label{T: Joint cv labels}
We have the convergence
\begin{gather*}
\mathcal{E}_k(F,G)
\cvg{k \rightarrow \infty}{}
\Efrom{1}{F(\tilde{X}_0, \ldots, \tilde{X}_{r-1})} \Efrom{1}{G(\tilde{X}_0+1, \ldots, \tilde{X}_{r-1}+1)}.
\end{gather*}
\end{lem}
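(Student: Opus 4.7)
The plan is to extend the strategy of Proposition \ref{T: First labels convergence} to the joint setting, starting from the explicit joint distribution given in Corollary \ref{T: Distribution of theta(infty,k) with two marked points}. First, I would expand $\mathcal{E}_k(F, G)$ as a sum over $k' \geq 1$, lengths $a, b \geq r$ and $c \geq 0$, and walks $\ul x \in \mathcal{M}^+_{a, 1 \to k'}$, $\ul y \in \mathcal{M}^{>1}_{b, 2 \to k'}$, $\ul z \in \mathcal{M}^{>1}_{c+1, k' \to k}$, with summand
\begin{gather*}
\frac{a+b+1}{3^{a+b+c}} F(x_1, \ldots, x_r) G(y_1, \ldots, y_r) \prod_{i=1}^a w(x_i) \prod_{i=1}^b w(y_i - 1) \prod_{i=1}^{c+1} w(z_i) w(z_i-1).
\end{gather*}
The coupling factor $a+b+1$ prevents an immediate factorization. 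I would split it as $a+b+1 = (a+1) + b$, so that $\mathcal{E}_k(F, G)$ becomes a sum of two terms, each of which is a product of an $X$-sum, a $Y$-sum and a middle $Z$-sum; the symmetric decomposition $a + (b+1)$ then produces the mirrored term.

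For the $X$- and $Y$-sums, I would apply the martingale change of measure $M_j = f(\hat X_j)/f(\hat X_0) \prod_{i<j} w(\hat X_i)$ used in the proof of Proposition \ref{T: First labels convergence}. The $X$-sum carrying the extra factor $(a+1)$ becomes, via the Markov property at time $r-1$, proportional to
\begin{gather*}
\Efrom{1}{F(\tilde X_0, \ldots, \tilde X_{r-1}) \bigl(\SumFWStar{\tilde X_{r-1}}{k'} + (r-1) \SumFW{\tilde X_{r-1}}{k'}\bigr)},
\end{gather*}
while the $Y$-sum (without the extra factor) becomes proportional to
\begin{gather*}
\Efrom{1}{G(\tilde X_0 + 1, \ldots, \tilde X_{r-1} + 1) \SumFW{\tilde X_{r-1}}{k'-1}}.
\end{gather*}
The $+1$ shift on the $Y$-side is obtained by identifying walks in $\mathcal{M}^{>1}_{b, 2 \to k'}$ with walks in $\mathcal{M}^+_{b, 1 \to k'-1}$ via $y_i \mapsto y_i - 1$. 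The mirrored term from $a + (b+1)$ is handled identically with the roles of $H$ and $H^*$ exchanged between the two sides.

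The remaining middle sum $\gamma_k(k') := \sum_{c \geq 0} 3^{-c} \sum_{\ul z \in \mathcal{M}^{>1}_{c+1, k' \to k}} \prod_{i=1}^{c+1} w(z_i) w(z_i - 1)$ depends only on $(k, k')$. I would compute it directly, viewing it as the Green's function of a weighted birth-and-death chain on $\{2, 3, \ldots\}$ with step weights $w(z) w(z-1)$, and derive a closed form in the spirit of Lemma \ref{T: Values of SumFW}. Combined with the explicit asymptotics for $\SumFW{x}{k'}$ and $\SumFWStar{x}{k'}$ (which, for the bounded random variable $x = \tilde X_{r-1}$, simplify to functions of $k'$ alone by Lemmas \ref{T: Values of SumFW} and \ref{T: Values of SumFWStar}), all factors depending on $k'$ collect into a probability distribution on $k'$ whose total mass is $1$ in the limit $k \to \infty$. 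The main obstacle is the precise computation and control of $\gamma_k(k')$: one must verify that its asymptotic behavior, together with the prefactor $f(1)^2 w(k') w(k'-1) / (9 f(k') f(k'-1))$ coming from the two martingale transforms, cancels exactly against the leading parts of $\SumFWStar{\cdot}{k'}$ and $\SumFW{\cdot}{k'-1}$, so that the summation over $k'$ gives $1$ in the limit and the $X$- and $Y$-expectations decouple to yield the announced product $\Efrom{1}{F(\tilde X_0, \ldots, \tilde X_{r-1})} \Efrom{1}{G(\tilde X_0 + 1, \ldots, \tilde X_{r-1} + 1)}$.
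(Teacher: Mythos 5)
Your plan follows essentially the same route as the paper: triple sum over $(a,b,c,k')$ using the joint law of Corollary \ref{T: Distribution of theta(infty,k) with two marked points}, linearization of the prefactor $a+b+1$, martingale change of measure on the $X$- and $Y$-branches to bring in $\SumFWStar{\cdot}{\cdot}$ and $\SumFW{\cdot}{\cdot}$, and a separate treatment of the middle segment. Your split $a+b+1 = (a+1)+b$ is just a harmless variant of the paper's grouping (which, after shifting indices, becomes $(a+2)+(b+2)+(2r-5)$); both yield the same combination $\SumFWStar{\cdot}{\cdot}\cdot\SumFW{\cdot}{\cdot} + \SumFW{\cdot}{\cdot}\cdot\SumFWStar{\cdot}{\cdot} + \mbox{const}\cdot\SumFW{\cdot}{\cdot}\cdot\SumFW{\cdot}{\cdot}$.

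However, the technical heart of the proof is exactly what you defer to. The paper introduces, for the middle $Z$-segment, a \emph{specific} martingale transform $\MZ_j = \frac{g(\hat{Z}_j)}{g(\hat{Z}_0)}\prod_{i<j} v(\hat{Z}_i)$ with $v(x)=w(x)w(x+1)$ and $g(x)=x(x+4)(5x^2+20x+17)$, and derives a closed form for $\SumGV{k'}{k}$ (Lemma \ref{T: Values of SumGV}); this corresponds to your $\gamma_k(k')$ and must actually be computed, not just announced. More importantly, the limit $\sum_{k'}\mathcal{H}_{x,y,k'}(k) \rightarrow 1$ is not a ``probability distribution with total mass 1'' statement, and the picture behind that heuristic is misleading: the sum has to be split into the ranges $k' \geq k$ and $x\vee y < k' \leq k-1$, which contribute $3/7$ and $4/7$ respectively in the limit, plus a negligible term from small $k'$. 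In particular, the contribution from $k' \geq k$ — where the conditioned chain overshoots the target $k$ — is a genuinely non-negligible part of the mass, and the asymptotics of $\SumFW{\cdot}{\cdot}$, $\SumFWStar{\cdot}{\cdot}$ in that regime (the $x>k$ cases of Lemmas \ref{T: Values of SumFW} and \ref{T: Values of SumFWStar}) are needed precisely there. Without the explicit formula for $\gamma_k(k')$ and this two-regime asymptotic analysis, the convergence to $1$ and the uniformity in $x,y \leq r$ are unproved, and the decoupling into $\Efrom{1}{F(\tilde{X}_0,\ldots,\tilde{X}_{r-1})}\Efrom{1}{G(\tilde{X}_0+1,\ldots,\tilde{X}_{r-1}+1)}$ does not yet follow.
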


\begin{proof}
As in the previous section, we introduce independent random walks $\hat{X}$, $\hat{Y}$ and $\hat{Z}$ with uniform steps in $\{-1,0,1\}$, and consider associated martingales $\MX$, $\MY$ and $\MZ$ such that for all $j \geq 0$,
\begin{gather*}
\MX_j = \frac{f(\hat{X}_j)}{f(\hat{X}_0)} \prod_{i=0}^{j-1} w(\hat{X}_i) \qquad
\MY_j = \frac{f(\hat{Y}_j)}{f(\hat{Y}_0)} \prod_{i=0}^{j-1} w(\hat{Y}_i) \qquad
\MZ_j = \frac{g(\hat{Z}_j)}{g(\hat{Z}_0)} \prod_{i=0}^{j-1} v(\hat{Z}_i),
\end{gather*}
where $v(x) = w(x) w(x+1) = x(x+4)/(x+2)^2$ and $g(x) = x(x+4)(5x^2 + 20 x + 17)$ for all $x \in \N$. From now on, we work under the assumption $1 \leq r \leq k$. With the above notation, we can write
\begin{align*}
\mathcal{E}_k(F,G)
= \sum_{a,b \geq r-1} \sum_{c \geq 0} \frac{a+b+1}{9} \sum_{k' \geq 1} \mathbb{E}
& \left[ \Efrom[sz]{1}{F(\hat{X}_0, \ldots, \hat{X}_{r-1}) \frac{f(1)w(k'+1)}{f(k'+1)} \MX_{a-1} \ind{\hat{X}_{a-1}=k'+1}} \right. \\
& \left. \Efrom[sz]{1}{G(\hat{Y}_0+1, \ldots, \hat{Y}_{r-1}+1) \frac{f(1)w(k')}{f(k')} \MY_{b-1} \ind{\hat{Y}_{b-1}=k'}} \right. \\
& \left. \Efrom[sz]{k'}{\frac{g(k')v(k-1)}{g(k-1)} \MZ_c \ind{\hat{Z}_c=k-1}} \right].
\end{align*}
Using the Markov property and re-arranging the terms yields
\begin{align*}
\mathcal{E}_k(F,G)
= \sum_{k' \geq 1} \mathbb{E}
& \left[\MX_{r-1} F(\hat{X}_0, \ldots, \hat{X}_{r-1}) \MY_{r-1} G(\hat{Y}_0+1, \ldots, \hat{Y}_{r-1}+1) \frac{f(1)w(k'+1)}{3 f(k'+1)} \vphantom{\sum_{c\geq 0}}\right. \\
& \left. \frac{f(1)w(k')}{3 f(k')} \sum_{a,b \geq 0} (a+b+2r-1)  \Efrom[sz]{\hat{X}_{r-1}}{\MX'_a \ind{\hat{X}'_a=k'+1}} \Efrom[sz]{\hat{Y}_{r-1}}{\MY'_b \ind{\hat{Y}'_b=k'}} \right. \\
& \left. \frac{g(k')v(k-1)}{g(k-1)} \sum_{c \geq 0} \Efrom[sz]{k'}{ \MZ_c \ind{\hat{Z}_c=k-1}} \right],
\end{align*}
where $\hat{X}',\hat{Y}', \MX', \MY'$ are independent copies of $\hat{X},\hat{Y}, \MX, \MY$.
We already have the necessary ingredients in Section \ref{S: First convergence} to study the first factors; the only additional quantity we need to compute is
\begin{gather*}
\SumGV{k'}{k} = \sum_{c \geq 0} \Efrom[sz]{k'}{\MZ_c \ind{\hat{Z}_c=k}} = \sum_{c \geq 0} \Pfrom[sz]{k'}{\dbtilde{Z}_c=k},
\end{gather*}
where $\dbtilde{Z}$ is the image of $\hat{Z}$ under the measure-change given by the martingale $\MZ$, i.e. the Markov process such that $\E{\phi(\dbtilde{Z}_i)} = \E{\MZ_i \phi(\hat{Z}_i)}$ for every continuous bounded function $\phi$.

\begin{lem} \label{T: Values of SumGV}
Fix $k,k' \geq 2$. We have the following equalities:
\begin{itemize}
\item if $k'\leq k$,
\begin{gather*}
\frac{g(k')v(k)}{g(k)} \SumGV{k'}{k} = \frac{3 g(k')}{35 (k+1)(k+2)(k+3)}
\end{gather*}
\item if $k'>k$,
\begin{gather*}
\frac{g(k')v(k)}{g(k)} \SumGV{k'}{k} = \frac{3 g(k)}{35 (k'+1)(k'+2)(k'+3)}.
\end{gather*}
\end{itemize}
\end{lem}

We omit the technical detail of the proof of this result; the ideas are exactly the same as in the proof of Lemma \ref{T: Values of SumFW}. Now
\begin{align*}
\mathcal{E}_k(F,G)
= \sum_{1 \leq x,y \leq r} \(\sum_{k' \geq 1} \mathcal{H}_{x,y,k'}(k)\)
& \Efrom{1}{\MX_{r-1} F(\hat{X}_0, \ldots, \hat{X}_{r-1}) \ind{\hat{X}_{r-1}=x}} \\
& \Efrom{1}{\MY_{r-1} G(\hat{Y}_0+1, \ldots, \hat{Y}_{r-1}+1) \ind{\hat{Y}_{r-1}=y}},
\end{align*}
where
\begin{align*}
\mathcal{H}_{x,y,k'}(k)
= & \frac{f(1)w(k'+1)}{3 f(k'+1)} \frac{f(1)w(k')}{3 f(k')} \frac{g(k')v(k-1)}{g(k-1)} \SumGV{k'}{k-1} \\
& \qquad \times (\SumFWStar{x}{k'+1} \SumFW{y}{k'} + \SumFW{x}{k'+1} \SumFWStar{y}{k'} + (2r-5) \SumFW{x}{k'+1} \SumFW{y}{k'}).
\end{align*}
Therefore, it is enough to show that $\sum_{k' \geq 1} \mathcal{H}_{x,y,k'}(k)$ converges to $1$ as $k \rightarrow \infty$, uniformly in $x,y \leq r$.

Let us first treat the terms for which $k' \geq k$. We have
\begin{gather*}
\frac{g(k')v(k-1)}{g(k-1)} \SumGV{k'}{k-1}
= \frac{3 g(k-1)}{35 (k'+1)(k'+2)(k'+3)}.
\end{gather*}
Moreover, the results of Lemmas \ref{T: Values of SumFW} and \ref{T: Values of SumFWStar} show that, uniformly in $y \leq r$,
\begin{gather*}
\frac{f(1)w(k')}{3 f(k')} \SumFWStar{y}{k'} \cvg{k' \rightarrow \infty}{} 1 \\
\frac{f(1)w(k')}{3 f(k')} \SumFW{y}{k'} \sim_{k' \rightarrow \infty} \frac{2}{(k')^2},
\end{gather*}
and that the same holds with $k'+1$ instead of $k'$ in the left-hand term. As a consequence, we have
\begin{align}
\sum_{k' \geq k} \mathcal{H}_{x,y,k'}(k)
& \sim_{k \rightarrow \infty} \frac{3 g(k-1)}{35} \sum_{k' \geq k} \frac{4}{(k')^5} \nonumber \\
& \sim_{k \rightarrow \infty} \frac{3 k^4}{7} \frac{1}{k^4} = \frac{3}{7}, \label{E: JCV, first sum}
\end{align}
uniformly in $x,y \leq r$.

In second, we consider the terms for which we have $x \vee y < k' \leq k-1$. Lemmas \ref{T: Values of SumFW} and \ref{T: Values of SumFWStar} yield the following estimates, uniformly in $y \leq r$:
\begin{gather*}
\frac{f(1)w(k')}{3 f(k')} \SumFWStar{y}{k'} = 1 - \frac{3}{10(k'+1)(k'+2)} \(C_y + 1 - \(1 \wedge \frac{h(k')}{h(y)}\)\)
\end{gather*}
and
\begin{gather*}
\frac{f(1)w(k')}{3 f(k')} \SumFW{y}{k'} \sim_{k' \rightarrow \infty} \frac{2}{(k')^2}.
\end{gather*}
Putting this together with the result of Lemma \ref{T: Values of SumGV}, we get
\begin{align}
\sum_{k'=x \vee y +1}^{k-2} \mathcal{H}_{x,y,k'}(k)
& \sim_{k \rightarrow \infty} \frac{3}{35 k^3} \sum_{k'=x \vee y +1}^{k-1} 2 \times \frac{2}{(k')^2} \times 5(k')^4 \nonumber \\
& \sim_{k \rightarrow \infty} \frac{12}{7 k^3} \frac{k^3}{3} = \frac{4}{7}. \label{E: JCV, second sum}
\end{align}

The remaining term is
\begin{gather*}
\sum_{k'=1}^{x \vee y} \mathcal{H}_{x,y,k'}(k) = O\( \frac{1}{k^3}\).
\end{gather*}
Putting this together with \eqref{E: JCV, first sum} and \eqref{E: JCV, second sum}, we obtain
\begin{gather*}
\sum_{k' \geq 1} \mathcal{H}_{x,y,k'}(k) \cvg{k \rightarrow \infty}{} \frac{3}{7}+\frac{4}{7} = 1,
\end{gather*}
uniformly in $x,y \leq r$, hence the conclusion.
\end{proof}

To complete the proof of Theorem \ref{T: Joint cv of the rerooted trees}, we finally come back to the trees attached on the branches $\pt{x}_{\infty,i}^{(k)}$, $i \geq 1$ and $\pt{y}_{\infty,i}^{(k)}$, $i \geq 1$, putting together the above result and Corollary \ref{T: Distribution of theta(infty,k) with two marked points}. Let $E^{(k)}(r)$ be the event that $a_{\infty}^{(k)}, b_{\infty}^{(k)} \geq r$, and the trees $(\ol{\tau}_{\infty,i})^{(k)}$ and $(\ol{\tau}'_{\infty,i})^{(k)}$ are finite for all $i \leq r$. Conditionally on $E^{(k)}(r)$, we have the following properties on the spines of $\theta_{\infty}^{(k)}$ and $\theta_{\infty}^{(-k+1)}$:
\begin{itemize}
\item The points $\spine_i(\theta_{\infty}^{(k)})$ and $\pt{x}_{\infty,i}^{(k)}$ are the same for all $i \leq r$, hence $R_i(\theta_{\infty}^{(k)}) = \tau_{\infty,i}^{(k)}$ and $L_i(\theta_{\infty}^{(k)}) = \ol{\tau}_{\infty,i}^{(k)}$ for all $i < r$.
\item The points $\spine_i(\theta_{\infty}^{(-k+1)})$ and $\pt{y}_{\infty,i}^{(k)}$ are the same for all $i \leq r$, hence $R_i(\theta_{\infty}^{(-k+1)}) = (\ol{\tau}'_{\infty,i})^{(k)}$ and $L_i(\theta_{\infty}^{(-k+1)}) = (\tau'_{\infty,i})^{(k)}$ for all $i < r$.
\item As a consequence, the spine labels $(S_i(\theta_{\infty}^{(k)}),S_i(\theta_{\infty}^{(-k+1)})-1)_{1 \leq i \leq r}$ converge in distribution to $(\tilde{X}_i,\tilde{Y}_i)_{0 \leq i \leq r-1}$, with $\tilde{X}_0=\tilde{Y}_0=1$.
\end{itemize}
Further conditioning on $(S_i(\theta_{\infty}^{(k)}),S_i(\theta_{\infty}^{(-k+1)}))_{0 \leq i < r}$, we get that:
\begin{itemize}
\item The subtrees $L_i(\theta_{\infty}^{(k)})$, $0 \leq i < r$ and $R_i(\theta_{\infty}^{(k)})$, $1 \leq i < r$ are independent random variables, with respective distributions $\GWgeom{S_i(\theta_{\infty}^{(k)})}$ and $\GWgeom{S_i(\theta_{\infty}^{(k)})}^+$.
\item  The subtrees $L_i(\theta_{\infty}^{(-k+1)})$, $0 \leq i < r$ and $R_i(\theta_{\infty}^{(-k+1)})$, $1 \leq i < r$ are independent random variables, respectively obtained by adding 1 to the labels of trees distributed according to $\GWgeom{S_i(\theta_{\infty}^{(k)})-1}^+$ and $\GWgeom{S_i(\theta_{\infty}^{(k)})-1}$.
\item The random forests $(L_i(\theta_{\infty}^{(k)},R_i(\theta_{\infty}^{(k)}))_{0\leq i < r}$ and $(L_i(\theta_{\infty}^{(-k+1)},R_i(\theta_{\infty}^{(-k+1)}))_{0\leq i < r}$ are independent.
\end{itemize}
Therefore, it is enough to show that $\P{\ol{E^{(k)}(r)}}$ converges to 0 as $k \rightarrow \infty$. Fix $\epsilon > 0$. We have
\begin{gather*}
\P{\ol{E^{(k)}(r)}} \leq \P{a_{\infty}^{(k)} < r \mbox{ or } b_{\infty}^{(k)} < r} + \E[sz]{1 \wedge \frac{2r+2}{a_{\infty}^{(k)}+b_{\infty}^{(k)}+1}}
\end{gather*}
We know from Lemma \ref{T: Joint cv labels} that the first term converges to 0. More precisely, for all $r' \in \N$, we have
\begin{gather*}
\P{a_{\infty}^{(k)} < r' \mbox{ or } b_{\infty}^{(k)} < r'} \leq \epsilon
\end{gather*}
for all $k$ large enough, hence
\begin{gather*}
\E[sz]{1 \wedge \frac{2r+2}{a_{\infty}^{(k)}+b_{\infty}^{(k)}+1}} \leq \epsilon + \frac{2r+2}{2r'+1}
\end{gather*}
for $k$ large enough. Thus we can choose $r'$ in such a way that for all $k$ large enough, we have
\begin{gather*}
\P{\ol{E^{(k)}(r)}} \leq 3 \epsilon.
\end{gather*}
This concludes the proof.

\section{Convergence of the associated quadrangulations} \label{S: Joint CV of the quadrangulations}

As indicated in the Introduction, the main step of the proof of Theorem \ref{T: Joint CV of the quadrangulations} consists in showing the following result. We use the conventions
\begin{gather*}
\theta_{\infty}^{(\infty)} = \ora{\theta_{\infty}}, \qquad
\theta_{\infty}^{(-\infty)} = \ola{\theta_{\infty}}, \\
\ora{Q}_{\infty}^{(\infty)} = \ora{Q}_{\infty}, \qquad
\ola{Q}_{\infty}^{(\infty)} = \ola{Q}_{\infty}.
\end{gather*}

\begin{prop} \label{T: Ball inclusions}
For all $r \in \N$ and $\epsilon > 0$, there exists $h \in \N$ such that for all $k$ large enough, possibly infinite, we have
\begin{gather} \label{E: R+ ball inclusion}
V\(B_{\ora{Q}_{\infty}^{(k)}}(r)\) \subset V\(B_{\theta_{\infty}^{(k)}}(h)\)
\end{gather}
and
\begin{gather} \label{E: L+ ball inclusion}
V\(B_{\ola{Q}_{\infty}^{(k)}}(r)\) \subset V\(B_{\theta_{\infty}^{(-k+1)}}(h)\) \cup \{\lambda_i: \abs{i}\leq r \}
\end{gather}
with probability at least $1-\epsilon$.
\end{prop}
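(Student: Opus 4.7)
The plan is to reduce the ball inclusion to uniform tail bounds on (i) how quickly the spine-labels of $\theta_\infty^{(k)}$ escape above $r$, and (ii) the depth of the Galton--Watson subtrees grafted on that initial spine segment. I focus on \eqref{E: R+ ball inclusion}; the argument for \eqref{E: L+ ball inclusion} is identical, modulo the observation that the shuttle vertices $\lambda_i$, $|i|\leq r$, already lie in $B_{\ola{Q}_\infty^{(k)}}(r)$ since the boundary is a geodesic path in $\Phi(\theta)$.

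The first, deterministic, ingredient is Schaeffer's classical principle: if $\pt{c}$ and $\sigma_\theta(\pt{c})$ are joined by a Schaeffer edge, then the tree-geodesic between them visits only corners of label $\geq l(\pt{c})$. Iterating this at most $r$ times, any $v \in V(B_{\ora{Q}_\infty^{(k)}}(r))$ is connected to the root by a tree-path visiting only corners with labels in $[-r, r+1]$. Let $I^{(k)}_r$ denote the first index on the spine of $\theta_\infty^{(k)}$ at which the spine-label exceeds $r$. The previous observation, applied in both the successor and the predecessor directions from the root-corner, implies that every such $v$ lies in the union of the spine segment up to $\spine_{I^{(k)}_r}$ and the subtrees $L_i(\theta_\infty^{(k)}), R_i(\theta_\infty^{(k)})$ for $i < I^{(k)}_r$, each truncated to its vertices of label in $[-r, r]$.

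It therefore suffices to find $h_1, h_2$ depending on $r, \epsilon$ such that, for all $k$ large enough possibly infinite, both the event $\{I^{(k)}_r \leq h_1\}$ and the event ``each of the first $h_1$ subtrees on either side of the spine has depth $\leq h_2$ once truncated to labels in $[-r,r]$'' hold with probability at least $1 - \epsilon/2$. Both events are continuous functionals of $\theta_\infty^{(k)}$ for the local topology, since they depend only on the ball $B_{\theta_\infty^{(k)}}(h_1 + h_2)$. For $k = \infty$, the spine-labels of $\ora{\theta_\infty}$ form the Markov chain $\tilde X$ started at $1$, which by the scaling to a Bessel(7) process is transient with positive drift and hence reaches $r+1$ in finite time almost surely; conditionally on the spine-labels (which are bounded by $r$ up to index $I^{(\infty)}_r$), the attached subtrees are distributed as $\GWgeom{x}$ or $\GWgeom{x}^+$ for $x \in \{1,\ldots, r\}$, whose depth truncated to labels in $[-r, r]$ has classical exponential tails uniformly in this finite range. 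The joint convergence of Theorem \ref{T: Joint cv of the rerooted trees} then transfers these estimates from $k = \infty$ to all finite $k$ large enough. Combining with $h := h_1 + h_2$ yields \eqref{E: R+ ball inclusion}.

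The main obstacle is the fact that the tree path joining the root of $\theta_\infty^{(k)}$ to a vertex in the ball need not stay within a ``one-sided'' subtree: it can wander through the portion of $\theta_\infty^{(k)}$ corresponding to the predecessor chain of the root-corner, which in terms of $\theta_\infty$ consists of vertices ancestral to $\pt{e}_k$. Controlling this part requires the two-sided analysis of $(\theta_\infty^{(k)},\theta_\infty^{(-k+1)})$, in the spirit of Corollary \ref{T: Distribution of theta(infty,k) with two marked points}, to ensure that the label statistics along both the forward and backward portions of the distinguished spine admit the same uniform control by $\tilde X$.
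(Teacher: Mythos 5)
There is a genuine gap in the claim that any $v \in V(B_{\ora{Q}_{\infty}^{(k)}}(r))$ is connected to the root by a tree-path with labels in $[-r, r+1]$, and in the ensuing bound by $I^{(k)}_r$. The Schaeffer ``arch'' property you invoke is one-sided: between a corner $\pt{c}$ and its successor $\sigma_{\theta}(\pt{c})$, the tree-geodesic indeed only meets labels $\geq l(\sigma_{\theta}(\pt{c}))$, but there is no matching \emph{upper} bound. If, say, the spine of $\theta$ rises to label $m$ before the first label $-1$ appears in a left subtree, then the tree-geodesic realising the single Schaeffer edge $(\pt{c}_0, \sigma(\pt{c}_0))$ already traverses labels up to $m$, so the ``iterated'' tree-path certainly does not stay below $r+1$. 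Because of this, $I^{(k)}_r$ (the first spine index where $S_i>r$) does \emph{not} control the spine extent of the quadrangulation ball. The failure is sharpest on the right: in $\ora{\theta_\infty}$ all corners with negative index have labels $\geq 1$, so \emph{every} label-$1$ vertex in the right-hand part has $\pt{c}_0$ as the successor of each of its corners, and is therefore at distance exactly $1$ from the root in $\ora{Q}_\infty$ — no matter which $R_i$ it sits in, and in particular for $i$ much larger than $I^{(\infty)}_1$. So the ball of radius $1$ already escapes your claimed region. The event you need to control on the right is instead that $R_i$ has \emph{no} label $\leq r$ past a fixed height, uniformly in $i$, which is the paper's condition $\mathbb{A}_{R+}(r,s)$ and requires the lower tail bound on the spine labels $S_i \gtrsim \delta\sqrt{s}$ (Lemma \ref{T: Lower bound on the spine labels}) together with the first-moment count of $\partial R_\infty^{(k)}(s)$ (Lemma \ref{T: Bound on E[nb of vertices at height s]}), not a hitting-time estimate for $\tilde X$.

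The left side has a parallel problem: controlling the successor chain from $\pt{c}_0$ requires knowing \emph{where} in the left subtrees a label $-r'$ first appears (and how deep these subtrees are), which is the condition $\mathbb{A}_L(r',s(r'-1),s(r'),h(r'))$. The paper proves this via the a.s.\ upper bound $S_i \lesssim K\sqrt{i\ln i}$ (Lemma \ref{T: Upper bound on the spine labels}), which forces $\prod_i w(r+S_i) \to 0$, i.e.\ a low label must appear within a bounded number of spine indices; the hitting time $I^{(k)}_r$ gives no such information. Finally, your last paragraph correctly identifies the finite-$k$ complication (the part of $\theta_\infty^{(k)}$ ``behind'' the split), but only gestures at it: the paper resolves it in Lemma \ref{T: LR conditions for k<infty} by conditioning on $\spine_{s(r)+1}(\theta_\infty^{(k)}) \prec \pt{e}_0(\theta_\infty)$ and then using that the split geodesic $\mathcal{S}_\infty$ separates the two halves of $\spq(Q_\infty)$, so any geodesic to a cut-off vertex must cross $\mathcal{S}_\infty$ and is therefore longer than $r$. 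This geometric separation argument is an essential step, not just a matter of invoking Corollary \ref{T: Distribution of theta(infty,k) with two marked points}.
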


Let us first see how this result allows us to prove the theorem.

\begin{proof}[Proof of Theorem \ref{T: Joint CV of the quadrangulations}]
Using the Skorokhod representation theorem, we assume that the convergence
\begin{gather*}
(\theta_{\infty}^{(k)},\theta_{\infty}^{(-k+1)}) \cvg{k \rightarrow \infty}{} (\ora{\theta_{\infty}},\ola{\theta_{\infty}}),
\end{gather*}
obtained in Theorem \ref{T: Joint cv of the rerooted trees}, holds almost surely. In particular, it also holds in probability: for all $h \in \N$ and $\epsilon > 0$, we have
\begin{gather*}
\P[sz]{D(\theta_{\infty}^{(k)},\ora{\theta_{\infty}}) \leq \frac{1}{1+h} \mbox{ and } D(\theta_{\infty}^{(-k+1)},\ola{\theta_{\infty}}) \leq \frac{1}{1+h}}
\geq 1- \epsilon
\end{gather*}
for all $k$ large enough, which means that
\begin{gather} \label{E: Applying the joint tree CV}
B_{\theta_{\infty}^{(k)}}(h) = B_{\ora{\theta_{\infty}}}(h)
\qquad \mbox{and} \qquad
B_{\theta_{\infty}^{(-k)}}(h) = B_{\ola{\theta_{\infty}}}(h)
\end{gather}
with probability at least $1-\epsilon$, for all $k$ large enough.

For all $r \in \N$ and $\epsilon > 0$, the above proposition shows that there exists $h_{\epsilon}$ such that the inclusions \eqref{E: R+ ball inclusion} and \eqref{E: L+ ball inclusion} hold with probability at least $1-\epsilon$, for all $k$ large enough. Putting this together with \eqref{E: Applying the joint tree CV} for $h=h_{\epsilon}$, we get that
\begin{gather*}
B_{\ora{Q}_{\infty}^{(k)}}(r) = B_{\Phi(\ora{\theta_{\infty}})} (r)
\qquad \mbox{and} \qquad
B_{\ola{Q}_{\infty}^{(k)}}(r) = B_{\Phi(\ola{\theta_{\infty}})} (r)
\end{gather*}
with probability at least $1-\epsilon$, for all $k$ large enough (possibly infinite). Therefore, we have the convergence
\begin{gather*}
(\ora{Q}_{\infty}^{(k)}, \ola{Q}_{\infty}^{(k)}) \cvg{k \rightarrow \infty}{} (\ora{Q}_{\infty}, \ola{Q}_{\infty})
\end{gather*}
in probability, hence the joint distributional convergence.
\end{proof}

The rest of the section is devoted to the proof of Proposition \ref{T: Ball inclusions}. We first introduce conditions on the ``left-hand side'' and ``right-hand side'' of the trees $\theta_{\infty}^{(k)}$, $\theta_{\infty}^{(-k+1)}$, which are sufficient to get the ball inclusions \eqref{E: R+ ball inclusion} and \eqref{E: L+ ball inclusion}. This is done in Section \ref{S: LR conditions} (see in particular Lemma \ref{T: LR conditions for k<infty}). In Sections \ref{S: Spine labels}, \ref{S: Left-hand condition} and \ref{S: Right-hand condition}, we then show that an ``elementary block'' of these conditions holds with arbitrarily high probability, for all $s$ and $k$ large enough. The corresponding results are stated in Lemmas \ref{T: Left-hand condition} and \ref{T: Right-hand condition}. Finally, Section \ref{S: Conclusion} concludes the proof of the proposition.

\subsection{Conditions on the right-hand and left-hand part of a labeled tree} \label{S: LR conditions}

We first introduce some more detailed notation for the balls in a rooted tree $T$. For all $s \geq 0$, we let $\partial B_T (s)$ denote the ``boundary'' of the ball of radius $s$, defined as
\begin{gather*}
\partial B_T(s) = \{ \pt{v} \in T: \pt{v} \mbox{ has height } s \}.
\end{gather*}
In what follows, the letter $L$ will correspond to the ``left-hand part'' of a tree, and $R$ will be used for the ``right-hand part''. All the following notations are given for the left-hand part, and are also valid for the right-hand part (replacing $L$ by $R$). Assume that $T \in \textbf{S}$, and recall that $L_i(T)$ denotes the subtree of the descendants of $\spine_i(T)$ that are on the left of the spine. We let
\begin{gather*}
L(T) = \bigcup_{i \geq 0} L_i(T),
\end{gather*}
and for all $s \geq 0$,
\begin{gather*}
\LB_T (s) = B_T(s) \cap L(T) = \bigcup_{i=0}^s B_{L_i(T)}(s-i),
\end{gather*}
and
\begin{gather*}
\partial \LB_T(s) = \partial B_T(s) \cap L(T).
\end{gather*}
We also use the natural extensions of this notation to labeled trees.

We are interested in the following subsets of $\lstrees$, for all $r,s,s',h \in \N$:
\begin{gather*}
\mathbb{A}_L (r,s,s',h) = \{ (T,l) \in \lstrees: \bigcup_{i=0}^{s'} L_i(T) \subset B_T(h), \mbox{ and } \exists \pt{v} \in \bigcup_{i=s+1}^{s'} L_i(T) \mbox{ s.t. } l(\pt{v}) = -r \}
\end{gather*}
and
\begin{gather*}
\mathbb{A}_{L +} (r,s) = \{ (T,l) \in \lstrees: \forall \pt{v} \in L(T) \setminus \LB_T(s),\ l(\pt{v}) > r \}.
\end{gather*}
Figure \ref{F: LRConditions} illustrates these definitions. We give a sufficient condition for an inclusion between the balls in $\theta$ and in $\Phi(\theta)$, in terms of these sets $\mathbb{A}_L (r,s,s',h)$, $\mathbb{A}_{L+} (r,s)$, $\mathbb{A}_R (r,s,s',h)$ and $\mathbb{A}_{R+} (r,s)$:

\begin{figure}[t]
\begin{center}
\includegraphics{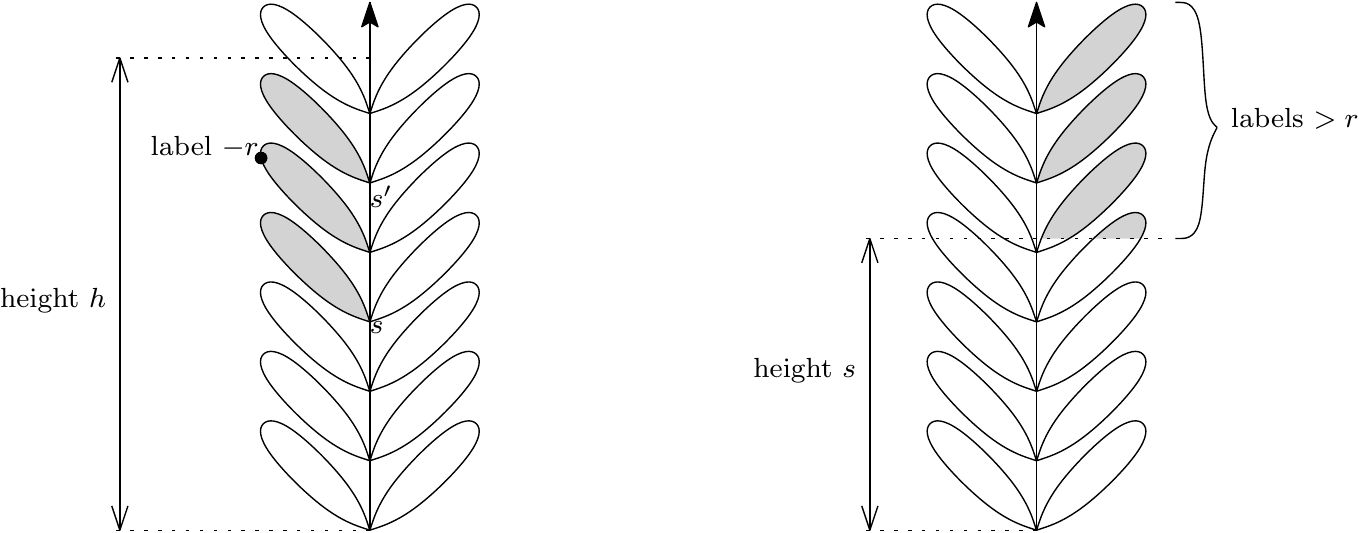}
\caption{An illustration of the conditions $\theta \in \mathbb{A}_L (r,s,s',h)$ (on the left) and $\theta \in \mathbb{A}_{R+} (r,s)$ (on the right).}
\label{F: LRConditions}
\end{center}
\end{figure}

\begin{lem} \label{T: LR conditions for ball inclusions}
Let $r \in \N$.
\begin{enumerate}
\item For all $\theta \in \ora{\lstrees}$, if there exists sequences $(s(r'))_{0 \leq r' \leq r}$ and $(h(r'))_{1 \leq r' \leq r}$ such that
\begin{gather*}
\theta \in \mathbb{A}_L (r',s(r'-1),s(r'),h(r')) \cap \mathbb{A}_{R+} (r',s(r')) \qquad \forall r' \in \{1,\ldots,r\},
\end{gather*}
then we have $V(B_{\Phi(\theta)}(r)) \subset V(B_{\theta}(h(r)))$.
\item  For all $\theta \in \ola{\lstrees}$, if there exists sequences $(s(r'))_{0 \leq r' \leq r}$ and $(h(r'))_{1 \leq r' \leq r}$ such that
\begin{gather*}
\theta \in \mathbb{A}_R (r',s(r'-1),s(r'),h(r')) \cap \mathbb{A}_{L+} (r',s(r')) \qquad \forall r' \in \{1,\ldots,r\},
\end{gather*}
then we have $V(B_{\Phi(\theta)}(r)) \subset V(B_{\theta}(h(r))) \cup \{\lambda_i: \abs{i}\leq r \}$.
\end{enumerate}
\end{lem}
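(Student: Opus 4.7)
The plan is to prove both parts by induction on $r$, converting $\Phi(\theta)$-distance information into location constraints inside $T$ by combining two ingredients: the classical label-distance lower bound~\eqref{E: Lower bound on d_Phi(.)}, and the combinatorial observation that, since consecutive corners of $\theta$ in clockwise order carry labels differing by at most one, every corner strictly between $\pt{c}$ and $\sigma_{\theta}(\pt{c})$ has label $\geq l(\pt{c})$ -- hence strictly greater than $l(\sigma_{\theta}(\pt{c}))$. In particular, any vertex whose corners lie in this contour arc has label $\geq l(\pt{c})$.

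For case~1 ($\theta \in \ora{\lstrees}$) I set $s(0)=h(0)=0$ by convention and prove inductively the slightly stronger inclusion
\begin{gather*}
V\!\left(B_{\Phi(\theta)}(r)\right) \subset \bigcup_{i\leq s(r)} L_i(T) \cup \RB_T(s(r)),
\end{gather*}
from which the stated inclusion follows, since the hypotheses $\mathbb{A}_L(r,s(r-1),s(r),h(r))$ and $\mathbb{A}_{R+}(r,s(r))$ already place the right-hand side inside $B_T(h(r))$. In the inductive step, pick $\pt{v}\in V(B_{\Phi(\theta)}(r))$: by~\eqref{E: Lower bound on d_Phi(.)}, $|l(\pt{v})|\leq r$. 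If $\pt{v}\in R(T)$, $\mathbb{A}_{R+}(r,s(r))$ directly gives $\pt{v}\in\RB_T(s(r))$. If $\pt{v}\in L_j\setminus\{\spine_j\}$, the aim is to exclude $j>s(r)$: pick any $\Phi$-neighbour $\pt{u}$ of $\pt{v}$ with $d_{\Phi(\theta)}(\text{root},\pt{u})\leq r-1$. By the inductive hypothesis $\pt{u}\in\bigcup_{i\leq s(r-1)}L_i\cup\RB_T(s(r-1))$, so every corner of $\pt{u}$ sits strictly clockwise-before the pocket $\bigcup_{i=s(r-1)+1}^{s(r)}L_i(T)$, which itself sits strictly clockwise-before every corner of $\pt{v}$. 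Consequently the edge $\{\pt{u},\pt{v}\}$ must be realised by $\sigma_{\theta}(\pt{c}_u)=\pt{c}_v$ (the reverse direction would demand a clockwise contour wrapping past infinity, impossible in an infinite plane tree), and the observation above forces every pocket vertex to have label $\geq l(\pt{c}_u)\geq -(r-1)>-r$. This contradicts the existence, guaranteed by $\mathbb{A}_L(r,s(r-1),s(r),h(r))$, of a pocket vertex with label exactly $-r$.

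Case~2 ($\theta \in \ola{\lstrees}$) is handled by the same induction after mirroring: the roles of $L$ and $R$ (and of the clockwise/counter-clockwise contour orientations) are swapped, and $\mathbb{A}_{L+}$, $\mathbb{A}_R$ take the parts of $\mathbb{A}_{R+}$, $\mathbb{A}_L$. The genuinely new feature is the shuttle $\Lambda$: since $l(\lambda_i)=i$ and the root vertex $\lambda_0$ of $\Phi(\theta)$ has label $0$, the lower bound~\eqref{E: Lower bound on d_Phi(.)} immediately forces any $\lambda_i\in B_{\Phi(\theta)}(r)$ to satisfy $|i|\leq r$, producing precisely the additional set $\{\lambda_i:|i|\leq r\}$ that appears in the conclusion.

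The principal technical obstacle I foresee is the planar-contour bookkeeping behind the ``pocket enclosure'' step. One has to verify, separately in the sub-cases $\pt{u}\in L_i$ with $i\leq s(r-1)$, $\pt{u}=\spine_k$ with $k\leq s(r-1)$, and $\pt{u}$ lying in the interior of $\RB_T(s(r-1))$, that every corner of $\pt{u}$ really precedes every pocket corner in clockwise order. This reduces to the standard observation that in a plane tree with a single spine, the clockwise contour on the positive side of the root visits $L_0,\spine_1,L_1,\spine_2,\ldots$ in that order, so that the contour indices of the corners of the $L_i$'s and of the spine vertices are monotone in $i$; the fact is visually clear from Figure~\ref{F: Split UIPQ} but needs to be spelled out carefully to make the contradiction watertight.
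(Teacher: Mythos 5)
Your proof is essentially correct and follows the same plan as the paper: prove by induction the stronger inclusion $V(B_{\Phi(\theta)}(r)) \subset \bigcup_{i\leq s(r)} L_i(T) \cup \RB_T(s(r))$, combine the label bound \eqref{E: Lower bound on d_Phi(.)} with the fact that corners between $\pt{c}$ and $\sigma_\theta(\pt{c})$ have label $> l(\sigma_\theta(\pt{c}))$, and use $\mathbb{A}_L$ to trap $\pt{v}$ while $\mathbb{A}_{R+}$ kills the right-hand side. The paper runs the inductive step through an explicit corner threshold $\xi_{r'}$ (the largest corner of $\spine_{s(r')}$) and three cases for how $\pt{v}$ attaches to $B_{\Phi(\theta)}(r)$; your pocket-enclosure contradiction is the same idea rearranged, so the two proofs differ only in bookkeeping, not in substance.

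One detail worth flagging in the mirrored case~2: the naive swap of $\pt{u}$ and $\pt{v}$ would give only ``pocket labels $\geq l(\pt{c}_v) \geq -r$'', which is not strong enough to contradict the label-$(-r)$ vertex guaranteed by $\mathbb{A}_R$. The argument must instead use the \emph{strict} half of your observation applied to the endpoint $\pt{c}_u = \sigma_\theta(\pt{c}_v)$ (or, equivalently, $l(\pt{c}_v) = l(\pt{u})+1 \geq -r+2$), which is precisely the move the paper makes in its case~2 for $\ola{\lstrees}$ by working with $l(\pt{v}') \geq -r$ rather than $l(\pt{v}) \geq -r-1$. Since you stated the observation with both the ``$\geq l(\pt{c})$'' and the ``$> l(\sigma_\theta(\pt{c}))$'' forms, the ingredient is there, but ``handled by the same induction after mirroring'' understates the asymmetry and would benefit from being spelled out.
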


\begin{proof}
Let $\theta \in \ora{\lstrees}$. We show by induction that for all $r \geq 0$, if there exists sequences $(s(r'))_{0 \leq r' \leq r}$ and $(h(r'))_{1 \leq r' \leq r}$ such that
\begin{gather*}
\theta \in \mathbb{A}_L (r,s(r'-1),s(r'),h(r')) \cap \mathbb{A}_{R+} (r',s(r')) \qquad \forall r' \in \{1,\ldots,r\},
\end{gather*}
then we have
\begin{gather*}
V\(B_{\Phi(\theta)}(r)\) \subset V\(\RB_{\theta}(s(r)) \cup \bigcup_{i=0}^{s(r)} L_i(\theta)\).
\end{gather*}
This is enough to prove the first part of the Lemma. Indeed, since $\theta$ belongs to $\mathbb{A}_L (r,s(r-1),s(r),h(r))$, we have $\bigcup_{i=0}^{s(r)} L_i(\theta) \subset B_{\theta} (h(r))$ and $s(r) \leq h(r)$, so
\begin{gather*}
V\(\RB_{\theta}(s(r)) \cup \bigcup_{i=0}^{s(r)} L_i(\theta)\) \subset V(B_{\theta}(h(r))).
\end{gather*}

The result is obviously true for $r=0$. Assume that it holds for a given $r \geq 0$. We order the corners of $\theta$ by writing $\pt{c}_n(\theta) \leq \pt{c}_{n'} (\theta)$ for all $n \leq n'$. For all $r' \leq r+1$, let $\xi_{r'}$ denote the largest corner incident to the vertex $\spine_{s(r')}$. Note that for all $r' \leq r$, for every corner $\pt{c}$ of $\theta$, we have $\pt{c} \leq \xi_{r'}$ if and only if every corner $\tilde{\pt{c}}$ incident to the same vertex as $\pt{c}$ verifies $\tilde{\pt{c}} \leq \xi_{r'}$. The induction hypothesis ensures that for every corner $\pt{c}$ of $\theta$ which is incident to a vertex of $B_{\Phi(\theta)}(r)$, we have $\pt{c} \leq \xi_r$. (This is the case even if the corresponding vertex is in the right-hand part of $\theta$.)

Let $\pt{v} \in V(\theta)$. The vertex $\pt{v}$ belongs to $B_{\Phi(\theta)}(r+1)$ if and only if one of the following conditions holds:
\begin{enumerate}
\item $\pt{v}$ belongs to $B_{\Phi(\theta)}(r)$.
\item There exist a vertex $\pt{v}'$ of $B_{\Phi(\theta)}(r)$, and two corners $\pt{c}$ and $\pt{c}'$, respectively incident to $\pt{v}$ and $\pt{v}'$, such that $\sigma_{\theta}(\pt{c}) = \pt{c}'$.
\item There exist a vertex $\pt{v}'$ of $B_{\Phi(\theta)}(r)$, and two corners $\pt{c}$ and $\pt{c}'$, respectively incident to $\pt{v}$ and $\pt{v}'$, such that $\sigma_{\theta}(\pt{c}') = \pt{c}$.
\end{enumerate}
Respectively, in these three cases, it holds that:
\begin{enumerate}
\item Every corner $\tilde{\pt{c}}$ incident to $\pt{v}$ is such that $\tilde{\pt{c}} \leq \xi_r \leq \xi_{r+1}$.
\item We have $\pt{c} \leq \pt{c}' \leq \xi_r$, so every corner $\tilde{\pt{c}}$ incident to $\pt{v}$ is such that $\tilde{\pt{c}} \leq \xi_r \leq \xi_{r+1}$.
\item The corner $\pt{c}$ is the first corner with label $l(\pt{v}')-1$ after $\pt{c}'$. Since $\pt{v}'$ belongs to $B_{\Phi(\theta)}(r)$, the bound \eqref{E: Lower bound on d_Phi(.)} ensures that
\begin{gather*}
d_{\Phi(\theta)}(\pt{v}_0,\pt{v}') \geq \abs{l(\pt{v}_0) - l(\pt{v}')} = l(\pt{v}')
\end{gather*}
(where $\pt{v}_0$ denotes the root of $\theta$), so $l(\pt{v}')-1 \geq -r-1$. Moreover, we have $\pt{c}' \leq \xi_r$, and since $\theta$ belongs to $\mathbb{A}_L(r+1,s(r),s(r+1),h(r+1))$, there exists a corner with label $-r-1$ between $\xi_r$ and $\xi_{r+1}$. As a consequence, we have $\pt{c} \leq \xi_{r+1}$, and therefore every corner $\tilde{\pt{c}}$ incident to $\pt{v}$ is such that $\tilde{\pt{c}} \leq \xi_{r+1}$.
\end{enumerate}
Thus, we get the inclusion
\begin{gather*}
V\(B_{\Phi(\theta)}(r+1)\) \subset V\(R(\theta) \cup \bigcup_{i=0}^{s(r+1)} L_i(\theta)\).
\end{gather*}
Finally, for every vertex $\pt{v} \in R(\theta) \setminus \RB_{\theta}(s(r+1))$, since $\theta$ belongs to $\mathbb{A}_{R+}(r+1,s(r+1))$, we have $l(\pt{v}) > r+1$, so $\pt{v}$ is at distance at least $r+2$ of the root in $\Phi(\theta)$. This yields
\begin{gather*}
V\(B_{\Phi(\theta)}(r+1)\) \subset V\(\RB_{\theta}(s(r+1)) \cup \bigcup_{i=0}^{s(r+1)} L_i(\theta)\).
\end{gather*}

We now consider the case where $\theta \in \ola{\lstrees}$. Similarly, it is enough to show by induction that for all $r \geq 0$, if there exists sequences $(s(r'))_{0 \leq r' \leq r}$ and $(h(r'))_{1 \leq r' \leq r}$ verifying the hypotheses, then we have
\begin{gather*}
V\(B_{\Phi(\theta)}(r) \setminus \Lambda\) \subset V\(\LB_{\theta}(s(r)) \cup \bigcup_{i=0}^{s(r)} R_i(\theta)\).
\end{gather*}
(Indeed, equation \eqref{E: Lower bound on d_Phi(.)} shows that $V(B_{\Phi(\theta)}(r) \cap \Lambda) \subset \{\lambda_i: \abs{i}\leq r \}$.) Assume that the result holds for a given $r \geq 0$. For all $r' \leq r+1$, let $\xi'_{r'}$ denote the smallest corner incident to the vertex $\spine_{s(r')}$. For every corner $\pt{c}$ of $\theta$ which is incident to a vertex of $B_{\Phi(\theta)}(r)$, we have $\pt{c} \geq \xi_r$. We fix $\pt{v} \in V(\theta)$, and study the same three cases as above. Respectively, we obtain that:
\begin{enumerate}
\item Every corner $\tilde{\pt{c}}$ incident to $\pt{v}$ is such that $\tilde{\pt{c}} \geq \xi'_r \geq \xi'_{r+1}$.
\item The corner $\pt{c}'$ is the first corner with label $l(\pt{v})-1$ after $\pt{c}$ (or a point of $\Lambda$, if such a corner does not exist), and equation \eqref{E: Lower bound on d_Phi(.)} gives that $l(\pt{v})-1=l(\pt{v}') \geq -r$. Since $\theta$ belongs to $\mathbb{A}_R(r+1,s(r),s(r+1),h(r+1))$, there exists a corner with label $-r-1$ which is (strictly) between $\xi'_{r+1}$ and $\xi'_r$. So, if we had $\pt{c} < \xi'_{r+1}$, this would imply $\pt{c}' < \xi'_r$, which is impossible since $\pt{v}'$ is in $B_{\Phi(\theta)}(r)$. Thus, we have $\pt{c} \geq \xi'_{r+1}$, and every corner $\tilde{\pt{c}}$ incident to $\pt{v}$ is such that $\tilde{\pt{c}} \geq \xi'_{r+1}$.
\item Note that since $\pt{v}$ is a vertex of $\theta$, we cannot have $\pt{v}' \in \Lambda$. Thus, we have $\pt{c} \geq \pt{c}' \geq \xi'_r$, so every corner $\tilde{\pt{c}}$ incident to $\pt{v}$ is such that $\tilde{\pt{c}} \geq \xi'_r \geq \xi'_{r+1}$.
\end{enumerate}
This yields the inclusion
\begin{gather*}
V\(B_{\Phi(\theta)}(r+1)\setminus \Lambda\) \subset V\(L(\theta) \cup \bigcup_{i=0}^{s(r+1)} R_i(\theta)\),
\end{gather*}
and the same argument as above concludes the proof.
\end{proof}

Our goal is now to obtain similar conditions on the trees $\theta_{\infty}^{(k)}$ and $\theta_{\infty}^{(-k+1)}$, sufficient to get the ball inclusions \eqref{E: R+ ball inclusion} and \eqref{E: L+ ball inclusion}. Note that we cannot apply the above result directly, since $\theta_{\infty}^{(k)}$ and $\theta_{\infty}^{(-k+1)}$ are elements of $\lstrees^{\ast}(0)$ and $\lstrees^{\ast}(1)$ instead of $\ora{\lstrees}$ and $\ola{\lstrees}$. Moreover, for example in $\theta_{\infty}^{(k)}$, we are not interested in \emph{all} the vertices which are on the right of the spine, but only in those which are on the right of the segment $\segs{\pt{e}_k(\theta_{\infty})}{\pt{e}_0(\theta_{\infty})}$. Informally, the others are ``cut-off'' from the root when we split the quadrangulation $Q_{\infty}$ along the maximal geodesic, so they do not belong to the neighbourhood of $\pt{e}_k(\theta_{\infty})$ in $\ora{Q}_{\infty}^{(k)}$.

Therefore, for all $k \in \N$, we further decompose the trees $\theta_{\infty}^{(k)}$ and $\theta_{\infty}^{(-k+1)}$. Recall the notation introduced in Section \ref{S: Bi-marked theta(n)}. We let
\begin{gather*}
R_{\infty}^{(k)} = \bigcup_{i=1}^{a_{\infty}^{(k)}+c_{\infty}^{(k)}} \tau_{\infty,i}^{(k)}
\qquad \mbox{and} \qquad
R_{\infty}^{(k)} (s) = R_{\infty}^{(k)} \cap B_{\theta_{\infty}^{(k)}} (s)
\quad \forall s \geq 0,
\end{gather*}
and similarly,
\begin{gather*}
L_{\infty}^{(-k+1)} = \bigcup_{i=1}^{b_{\infty}^{(k)}+c_{\infty}^{(k)}} (\tau'_{\infty,i})^{(k)}
\qquad \mbox{and} \qquad
L_{\infty}^{(-k+1)} (s) = L_{\infty}^{(-k+1)} \cap L_{\theta_{\infty}^{(-k+1)}} (s)
\quad \forall s \geq 0.
\end{gather*}
Note that we have, for example, $R_{\infty}^{(k)} \subset R(\theta_{\infty}^{(k)})$ and $R_{\infty}^{(k)} (s) \subset \RB_{\theta_{\infty}^{(k)}}(s)$.
We consider the following events:
\begin{itemize}
\item $\mathcal{A}_{R+}^{(k)} (r,s)$: ``every vertex $\pt{v} \in R_{\infty}^{(k)} \setminus (R_{\infty}^{(k)} (s))$ has label greater than $r$ in $\theta_{\infty}^{(k)}$'',
\item $\mathcal{A}_{L+}^{(-k+1)} (r,s)$: ``every vertex $\pt{v} \in L_{\infty}^{(-k+1)} \setminus (L_{\infty}^{(-k+1)} (s))$ has label greater than $r$ in $\theta_{\infty}^{(-k+1)}$''.
\end{itemize}
For $k=\infty$, we complement this notation by setting
\begin{gather*}
\mathcal{A}_{R+}^{(\infty)} (r,s) = \{ \ora{\theta_{\infty}} \in \mathbb{A}_{R+} (r,s) \} \quad \mbox{and} \quad
\mathcal{A}_{L+}^{(-\infty)} (r,s) = \{ \ola{\theta_{\infty}} \in \mathbb{A}_{L+} (r,s) \}.
\end{gather*}
We can now adapt Lemma \ref{T: LR conditions for ball inclusions} to $\theta_{\infty}^{(k)}$ in the following way:

\begin{lem} \label{T: LR conditions for k<infty}
Let $r \in \N$, and consider two sequences of positive integers $(s(r'))_{0 \leq r' \leq r}$ and $(h(r'))_{1 \leq r' \leq r}$. For all $k \in \N \cup \{\infty\}$, we have that:
\begin{enumerate}
\item Conditionally on $\spine_{s(r)+1}(\theta_{\infty}^{(k)}) \prec \pt{e}_0(\theta_{\infty})$ in $\theta_{\infty}^{(k)}$ and on the event
\begin{gather} \label{E: Cond for ball inclusion, k positive}
\bigcap_{r'=1}^r \(\theta_{\infty}^{(k)} \in \mathbb{A}_L (r',s(r'-1),s(r'),h(r'))\) \cap \mathcal{A}_{R+}^{(k)} (r',s(r')),
\end{gather}
we have
\begin{gather*}
V\(B_{\ora{Q}_{\infty}^{(k)}}(r)\) \subset V\(B_{\theta_{\infty}^{(k)}}(h(r))\)
\end{gather*}
almost surely.
\item Conditionally on $\spine_{s(r)+1}(\theta_{\infty}^{(-k+1)}) \prec \pt{e}_0(\theta_{\infty})$ in $\theta_{\infty}^{(-k+1)}$ and on the event
\begin{gather} \label{E: Cond for ball inclusion, k negative}
\bigcap_{r'=1}^r \(\theta_{\infty}^{(-k+1)} \in \mathbb{A}_R (r',s(r'-1),s(r'),h(r'))\) \cap \mathcal{A}_{L+}^{(-k+1)} (r',s(r')),
\end{gather}
we have
\begin{gather*}
V\(B_{\ola{Q}_{\infty}^{(k)}}(r)\) \subset V\(B_{\theta_{\infty}^{(-k+1)}}(h(r))\) \cup \{\lambda_i: \abs{i}\leq r \}
\end{gather*}
almost surely.
\end{enumerate}
\end{lem}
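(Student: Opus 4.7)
The plan is to reduce both statements to the already-proved Lemma~\ref{T: LR conditions for ball inclusions}, handling $k=\infty$ and finite $k$ separately. For $k=\infty$, by convention $\theta_\infty^{(\infty)}=\ora{\theta_\infty}\in\ora{\lstrees}$, $R_\infty^{(\infty)}=R(\ora{\theta_\infty})$, and $\mathcal{A}_{R+}^{(\infty)}(r',s(r'))=\{\ora{\theta_\infty}\in\mathbb{A}_{R+}(r',s(r'))\}$, so the event in~\eqref{E: Cond for ball inclusion, k positive} coincides with the hypothesis of Lemma~\ref{T: LR conditions for ball inclusions}(1); the spine condition $\spine_{s(r)+1}\prec\pt{e}_0$ is vacuous in this limit. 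The same reduction handles the second statement using the $k=-\infty$ convention.

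For finite $k$, the idea is to replay the induction argument of Lemma~\ref{T: LR conditions for ball inclusions} essentially verbatim inside $\theta_\infty^{(k)}$. The subtlety is that $\theta_\infty^{(k)}\in\lstrees^\ast(0)$ rather than $\ora{\lstrees}$: its spine eventually crosses $\pt{e}_0(\theta_\infty)$ and continues into the ``other side'' of the UIPQ. Under the hypothesis $\spine_{s(r)+1}(\theta_\infty^{(k)})\prec\pt{e}_0(\theta_\infty)$, however, the first $s(r)+1$ spine vertices of $\theta_\infty^{(k)}$ coincide with $\pt{x}_{\infty,0}^{(k)},\ldots,\pt{x}_{\infty,s(r)+1}^{(k)}$ on the path from $\pt{e}_k$ to $\pt{e}_0$, so that up to depth $s(r)+1$ the subtrees $L_i(\theta_\infty^{(k)})$ and $R_i(\theta_\infty^{(k)})$ agree with $\ol{\tau}_{\infty,i}^{(k)}$ and $\tau_{\infty,i}^{(k)}\subset R_\infty^{(k)}$ respectively. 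Consequently, $\mathbb{A}_L(r',s(r'-1),s(r'),h(r'))$ and $\mathcal{A}_{R+}^{(k)}(r',s(r'))$ play inside $\theta_\infty^{(k)}$ exactly the roles played by $\mathbb{A}_L$ and $\mathbb{A}_{R+}$ inside $\theta$ in the proof of Lemma~\ref{T: LR conditions for ball inclusions}, and the same three-case induction on $r'\leq r$ applies verbatim to yield $V(B_{\Phi(\theta_\infty^{(k)})}(r'))\subset V(\RB_{\theta_\infty^{(k)}}(s(r'))\cup\bigcup_{i=0}^{s(r')}L_i(\theta_\infty^{(k)}))$ at each step.

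The final bound $V(B_{\Phi(\theta_\infty^{(k)})}(r))\subset V(B_{\theta_\infty^{(k)}}(h(r)))$ then follows from $\bigcup_{i=0}^{s(r)}L_i(\theta_\infty^{(k)})\subset B_{\theta_\infty^{(k)}}(h(r))$ together with the observation that by $\mathcal{A}_{R+}^{(k)}(r,s(r))$ and~\eqref{E: Lower bound on d_Phi(.)}, every right-part vertex outside $R_\infty^{(k)}(s(r))$ lies at $\Phi$-distance $>r$ from the root. The second statement is proved by the symmetric argument, with the extra vertices $\lambda_i$, $|i|\leq r$, absorbed into the right-hand side via the observation $V(B_{\Phi(\theta)}(r)\cap\Lambda)\subset\{\lambda_i:|i|\leq r\}$ supplied by~\eqref{E: Lower bound on d_Phi(.)} (vacuous when $k$ is finite, since $\Phi(\theta_\infty^{(-k+1)})$ then has no shuttle $\Lambda$).

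The main obstacle is the bookkeeping around $\pt{e}_0$: one must confirm that no Schaeffer successor consulted during the induction escapes the region of $\theta_\infty^{(k)}$ delimited by $\pt{e}_k$ and $\pt{e}_0$, so that the ball $B_{\Phi(\theta_\infty^{(k)})}(r)$ genuinely agrees with $B_{\ora{Q}_\infty^{(k)}}(r)$. This is precisely what the hypothesis $\spine_{s(r)+1}(\theta_\infty^{(k)})\prec\pt{e}_0(\theta_\infty)$ guarantees: every corner consulted at step $r'\leq r$ carries a label $\geq -r$, while $\mathbb{A}_L(r',s(r'-1),s(r'),h(r'))$ supplies a corner with label $-r'$ between the spine markers $\spine_{s(r'-1)}$ and $\spine_{s(r')}$ of $\theta_\infty^{(k)}$, so the induction never needs to look past the $s(r)$-th spine marker and a fortiori never crosses $\pt{e}_0$. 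Hence the fact that $\Phi(\theta_\infty^{(k)})$ is formally a quadrangulation of the plane rather than of the half-plane has no bearing on the ball of radius $r$.
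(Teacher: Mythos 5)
Your proposal is right about the $k=\infty$ case (direct application of Lemma~\ref{T: LR conditions for ball inclusions}) and correctly identifies that the induction of that lemma gives control of the part of the ball lying in $L(\theta_\infty) = L(\theta_\infty^{(k)}) \cup R_\infty^{(k)}$. But there is a genuine gap concerning the vertices on the \emph{far side} of the split, namely those in $R(\theta_\infty)\setminus\mathcal{S}_\infty$, in $\Gamma'_\infty=\{\pt{e}'_{k'}:k'\geq 1\}$, and on $\mathcal{S}_\infty$ itself. Recall that
\[
R(\theta_\infty^{(k)}) = R_\infty^{(k)} \sqcup R(\theta_\infty),
\]
and the hypothesis $\mathcal{A}_{R+}^{(k)}(r,s(r))$ controls only the labels in $R_\infty^{(k)}$, \emph{not} those in $R(\theta_\infty)$ (whose labels tend to $-\infty$ along the spine). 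So the assertion that ``every right-part vertex outside $R_\infty^{(k)}(s(r))$ lies at $\Phi$-distance $>r$'' is not a consequence of $\mathcal{A}_{R+}^{(k)}$: it fails for plenty of vertices of $R(\theta_\infty)$. These vertices, together with the copies $\pt{e}'_{k'}$ which are not even vertices of $\theta_\infty^{(k)}$, must be excluded from $B_{\ora{Q}_\infty^{(k)}}(r)$ by a separate argument. The paper does this with a separator/topology argument: the construction of $\spq(Q_\infty)$ forbids any edge between $(\Gamma'_\infty\cup R(\theta_\infty))\setminus\mathcal{S}_\infty$ and $L(\theta_\infty)\setminus\mathcal{S}_\infty$, so every path from such a far-side vertex to $\pt{e}_k\in L(\theta_\infty)\setminus\mathcal{S}_\infty$ must pass through the spine $\mathcal{S}_\infty$ of $\theta_\infty$; one then shows $V(B_{\ora{Q}_\infty^{(k)}}(r))\cap V(\mathcal{S}_\infty)=\emptyset$, which kills these paths within distance $r$. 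This separator argument is entirely absent from your write-up.

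Relatedly, your claim that $B_{\Phi(\theta_\infty^{(k)})}(r)$ ``genuinely agrees'' with $B_{\ora{Q}_\infty^{(k)}}(r)$ is unjustified and in fact cannot hold in general: $\Phi(\theta_\infty^{(k)})$ is the \emph{unsplit} quadrangulation $Q_\infty^{(k)}$, and by the opening lemma of Section 1.2 its ball is the union of the balls in $\ora{Q}_\infty^{(k)}$ and $\ola{Q}_\infty^{(k)}$, which is typically strictly larger. Finally, you slightly misread the role of the hypothesis $\spine_{s(r)+1}(\theta_\infty^{(k)})\prec\pt{e}_0(\theta_\infty)$: its purpose in the paper is not merely ``the induction never needs to look past $\pt{e}_0$''. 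It is used twice, precisely to run the separator argument: first it guarantees $\pt{e}_k\in L(\theta_\infty)\setminus\mathcal{S}_\infty$ (so the separation applies to the root itself), and second it guarantees that the controlled set $R_\infty^{(k)}(s(r))\cup\bigcup_{i\leq s(r)}L_i(\theta_\infty^{(k)})$ is disjoint from $\mathcal{S}_\infty$, whence the ball avoids $\mathcal{S}_\infty$. Without these two uses, the proof does not close; you would need to supply them to have a complete argument.
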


Figure \ref{F: Conditions finite k} illustrates the ``new'' conditions which appear, compared to the conditions of Lemma \ref{T: LR conditions for ball inclusions} (both are shown for the first case). Note that the condition on the left-hand side of $\theta_{\infty}^{(k)}$ is exactly the same as in Lemma \ref{T: LR conditions for ball inclusions}, already illustrated in Figure \ref{F: LRConditions}.

\begin{figure}[t]
\begin{center}
\includegraphics{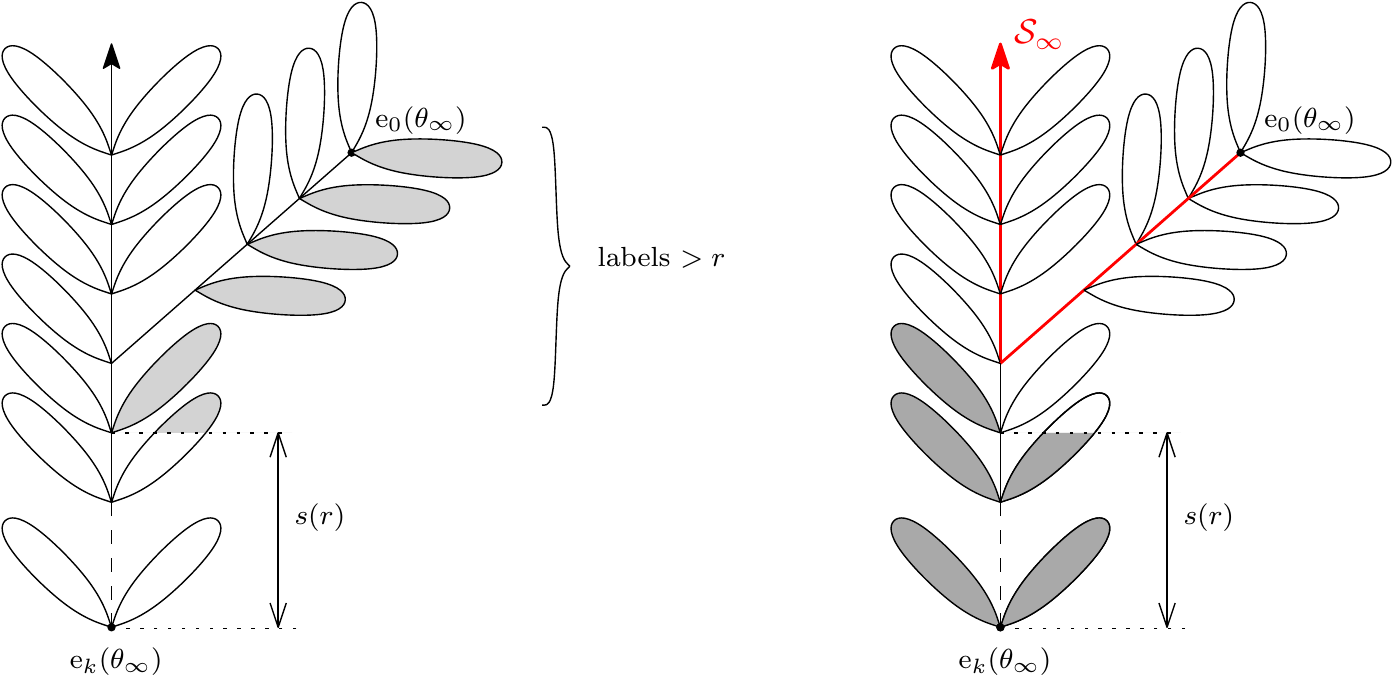}
\caption{Illustration of the event $\mathcal{A}_{R+}(s(r))$ (on the left), and of the additional condition $\spine_{s(r)+1}(\theta_{\infty}^{(k)}) \prec \pt{e}_0(\theta_{\infty})$ in $\theta_{\infty}^{(k)}$ (on the right). The second figure emphasises the fact that under the condition $\spine_{s(r)+1}(\theta_{\infty}^{(k)}) \prec \pt{e}_0(\theta_{\infty})$, the spine $\mathcal{S}_{\infty}$ does not intersect the set $R_{\infty}^{(k)} (s(r)) \cup \bigcup_{i=0}^{s(r)} L_i(\theta_{\infty}^{(k)})$ (and in particular, it does not contain $e_k(\theta_{\infty})$). This will be used in the proof of Lemma \ref{T: LR conditions for k<infty}.}
\label{F: Conditions finite k}
\end{center}
\end{figure}

\begin{proof}
The case where $k=\infty$ is a direct application of Lemma \ref{T: LR conditions for ball inclusions}. From now on, we fix $k \in \N$.

Let $\mathcal{S}_{\infty} = \{\spine_i(\theta_{\infty}): i\geq 0 \}$ be the spine of $\theta_{\infty}$, and $\Gamma'_{\infty} = \{\pt{e}'_{k'}: k' \geq 1 \}$ be the ``copy'' of the infinite geodesic ray we introduced in the definition of the split quadrangulation $\spq(Q_{\infty})$ (see for example Figure \ref{F: Split UIPQ}). The construction of $\spq(Q_{\infty})$ ensures that there are no edges between the vertices of $(\Gamma'_{\infty} \cup R(\theta_{\infty})) \setminus \mathcal{S}_{\infty}$ and the vertices of $L(\theta_{\infty}) \setminus \mathcal{S}_{\infty}$. As a consequence, any geodesic from a point of $(\Gamma'_{\infty} \cup R(\theta_{\infty})) \setminus \mathcal{S}_{\infty}$ to a point of $L(\theta_{\infty}) \setminus \mathcal{S}_{\infty}$ contains a vertex of $\mathcal{S}_{\infty}$. 

Note that we have the following equalities:
\begin{gather*}
R(\theta_{\infty})
 = R(\theta_{\infty}^{(k)}) \setminus R_{\infty}^{(k)}
 = R(\theta_{\infty}^{(-k+1)}) \cup L_{\infty}^{(-k+1)} \\
L(\theta_{\infty})
 = L(\theta_{\infty}^{(k)}) \cup R_{\infty}^{(k)}
 = L(\theta_{\infty}^{(-k+1)}) \setminus L_{\infty}^{(-k+1)}.
\end{gather*}

In the first case, the same induction as in the proof Lemma \ref{T: LR conditions for ball inclusions} shows that conditionally on \eqref{E: Cond for ball inclusion, k positive}, we have
\begin{gather} \label{E: Partial ball inclusion, finite k}
V\(B_{\ora{Q}_{\infty}^{(k)}}(r)\) \cap V\(L(\theta_{\infty})\) \subset V\(R_{\infty}^{(k)} (s(r)) \cup \bigcup_{i=0}^{s(r)} L_i(\theta_{\infty}^{(k)})\).
\end{gather}
Indeed, the first step of the induction shows that there are no vertices belonging to the ball $B_{\ora{Q}_{\infty}^{(k)}}(r)$ after $\spine_{s(r)}(\theta_{\infty}^{(k)})$ in the clockwise order, or equivalently
\begin{gather*}
V\(B_{\ora{Q}_{\infty}^{(k)}}(r)\) \cap V\(L(\theta_{\infty}^{(k)})\) \subset V\(\bigcup_{i=0}^{s(r)} L_i(\theta_{\infty}^{(k)})\),
\end{gather*}
and since the vertices in $R_{\infty}^{(k)} \setminus R_{\infty}^{(k)}(s(r))$ all have labels greater than $r$, we also have
\begin{gather*}
V\(B_{\ora{Q}_{\infty}^{(k)}}(r)\) \cap V\(R_{\infty}^{(k)}\) \subset V\(R_{\infty}^{(k)} (s(r))\).
\end{gather*}
Noting that $L(\theta_{\infty}^{(k)}) \cup R_{\infty}^{(k)} = L(\theta_{\infty})$ yields inclusion \eqref{E: Partial ball inclusion, finite k}.

To conclude the proof of the first point, we only have to show that the vertices of $R(\theta_{\infty}) \setminus \mathcal{S}_{\infty}$ are at distance at least $r+1$ from $\pt{e}_k(\theta_{\infty})$ in $\ora{Q}_{\infty}^{(k)}$. Let $\pt{v} \in V(R(\theta_{\infty}) \setminus \mathcal{S}_{\infty})$, and let $\gamma$ be a geodesic path from $\pt{v}$ to $\pt{e}_k(\theta_{\infty})$ in $\ora{Q}_{\infty}^{(k)}$. The condition $\spine_{s(r)+1}(\theta_{\infty}^{(k)}) \prec \pt{e}_0(\theta_{\infty})$ now has two consequences, as noted in the caption of Figure \ref{F: Conditions finite k}:
\begin{itemize}
\item First, $\pt{e}_k(\theta_{\infty})$ belongs to $L(\theta_{\infty}) \setminus \mathcal{S}_{\infty}$. Thus the geodesic $\gamma$ goes from a point of $R(\theta_{\infty}) \setminus \mathcal{S}_{\infty}$ to a point of $L(R_{\infty}) \setminus \mathcal{S}_{\infty}$, so there exists a vertex $\pt{v}'$ of $\gamma$ which belongs to the spine $\mathcal{S}_{\infty}$ (see the remark we made at the beginning of the proof).
\item Second, the set
\begin{gather*}
R_{\infty}^{(k)} (s(r)) \cup \bigcup_{i=0}^{s(r)} L_i(\theta_{\infty}^{(k)})
\end{gather*}
does not intersect $\mathcal{S}_{\infty}$, so inclusion \eqref{E: Partial ball inclusion, finite k} implies that
\begin{gather} \label{E: Spine and ball do not intersect}
V\(B_{\ora{Q}_{\infty}^{(k)}}(r)\) \cap V\(\mathcal{S}_{\infty}\) = \emptyset.
\end{gather}
\end{itemize}
Putting these two facts together, we get that
\begin{gather*}
d_{\ora{Q}_{\infty}^{(k)}} (\pt{v},\pt{e}_k(\theta_{\infty})) \geq d_{\ora{Q}_{\infty}^{(k)}} (\pt{v}',\pt{e}_k(\theta_{\infty})) \geq r+1.
\end{gather*}

Similarly, in the second case, conditionally on \eqref{E: Cond for ball inclusion, k negative}, we have
\begin{gather*}
V\(B_{\ola{Q}_{\infty}^{(k)}}(r)\) \cap V\(R(\theta_{\infty})\) \subset V\(L_{\infty}^{(k)} (s(r)) \cup \bigcup_{i=0}^{s(r)} R_i(\theta_{\infty}^{(k)})\),
\end{gather*}
and conditionally on $\spine_{s(r)+1}(\theta_{\infty}^{(-k+1)}) \prec \pt{e}_0(\theta_{\infty})$, the latter set does not intersect $\mathcal{S}_{\infty}$, so equation \eqref{E: Spine and ball do not intersect} still holds. Thus we only have to show that the vertices of $L(\theta_{\infty}) \setminus \mathcal{S}_{\infty}$ are at distance at least $r+1$ from $\pt{e}'_k$ in $\ola{Q}_{\infty}^{(k)}$. As above, for every such vertex $\pt{v}$, any geodesic path from $\pt{v}$ to $\pt{e}'_k$ in $\ola{Q}_{\infty}^{(k)}$ intersects $\mathcal{S}_{\infty}$, hence
\begin{gather*}
d_{\ola{Q}_{\infty}^{(k)}} (\pt{v},\pt{e}_k(\theta_{\infty})) \geq r+1.
\end{gather*}
\end{proof}

From now on, we fix $r \in \N$. The goal of the next sections is to show that the above conditions hold with arbitrarily high probability, for $k$ large enough. For condition \eqref{E: Cond for ball inclusion, k positive}, the main ingredients are the following lemmas:

\begin{lem} \label{T: Left-hand condition}
Let $s \in \N$ and $\epsilon > 0$. There exists $s_L = s_L(r,s,\epsilon)$ such that for all $s' \geq s_L$, there exists $h_L(s',\epsilon)$ such that for all $k$ large enough, possibly infinite, we have
\begin{gather*}
\P[sz]{\theta_{\infty}^{(k)} \notin \mathbb{A}_L (r,s,s',h_L(s',\epsilon))} \leq \epsilon.
\end{gather*}
\end{lem}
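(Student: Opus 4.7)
My plan is to first establish the bound for the limit tree $\ora{\theta_{\infty}}$ (the case $k=\infty$), then transfer it to finite $k$ via the local convergence of Theorem~\ref{T: Joint cv of the rerooted trees}. The key transfer observation is that $\mathbb{A}_L(r,s,s',h)$ is a clopen event in the local topology: the inclusion $\bigcup_{i \leq s'} L_i(T) \subset B_T(h)$ amounts to the absence of any vertex of these subtrees at height exactly $h+1$, a condition determined by $B_T(h+1)$, and the label condition is then read off inside $B_T(h)$. Portmanteau therefore yields $\P{\theta_{\infty}^{(k)} \in \mathbb{A}_L(r,s,s',h)} \cvg{k \to \infty}{} \P{\ora{\theta_{\infty}} \in \mathbb{A}_L(r,s,s',h)}$ for every fixed pair $(s',h)$, so it suffices to choose $s_L$ and $h_L(\cdot,\epsilon)$ ensuring $\P{\ora{\theta_{\infty}} \notin \mathbb{A}_L(r,s,s',h_L(s',\epsilon))} \leq \epsilon/2$ for all $s' \geq s_L$.

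For $\ora{\theta_{\infty}}$, recall that conditionally on the spine labels $(S_i)_{i \geq 0}$ the subtrees $L_i$ are independent with $L_i \sim \GWgeom{S_i}$, and $S_i \geq 1$ for $i \geq 1$. A shift-by-$r$ in the Chassaing--Durhuus identity \eqref{E: w [Cha-Dur]} gives $\P{\min L_i > -r \mid S_i} = w(S_i + r)$, with $1 - w(y) = 2/((y+1)(y+2))$; and since labels vary by at most $1$ along edges, ``$L_i$ contains a vertex labelled $-r$'' is equivalent to ``$\min L_i \leq -r$''. Consequently, the conditional probability that none of $L_{s+1}, \ldots, L_{s'}$ has a vertex labelled $-r$ factors as $\prod_{i=s+1}^{s'} w(S_i + r)$. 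I would then show that this product tends to $0$ almost surely as $s' \to \infty$, which reduces to the divergence $\sum_{i} 1/\tilde{X}_i^2 = \infty$ a.s. That divergence follows from the discrete Bessel(7)-type behaviour of $\tilde{X}$ recalled before Theorem~\ref{T: Joint cv of the rerooted trees}: a direct martingale computation using the explicit transition probabilities shows $\E{\tilde{X}_{i+1}^2 - \tilde{X}_i^2 \mid \tilde{X}_i}$ is bounded uniformly, hence $\E{\tilde{X}_i^2} = O(i)$, and via the Lamperti scaling limit together with the standard fact that $Z_t^2/t$ converges almost surely for a Bessel($7$) process $Z$, one obtains $\tilde X_i^2 \leq C i$ for all $i$ large enough almost surely, so $\sum 1/\tilde X_i^2 \geq \sum 1/(Ci) = \infty$ a.s. Dominated convergence then produces $s_L = s_L(r,s,\epsilon)$ such that $\E{\prod_{i=s+1}^{s'} w(S_i+r)} \leq \epsilon/4$ for all $s' \geq s_L$. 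For such a fixed $s'$, the union $\bigcup_{i \leq s'} L_i(\ora{\theta_{\infty}})$ has almost surely finite height (each $L_i$ is a.s.\ a finite critical Galton--Watson tree conditionally on $(S_i)$), so I can choose $h_L(s',\epsilon)$ large enough that the union lies inside $B_{\ora{\theta_{\infty}}}(h_L(s',\epsilon))$ with probability at least $1-\epsilon/4$; combining yields the required $\epsilon/2$-bound.

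The first paragraph then finishes the proof: for these choices of $s_L$ and $h_L$, portmanteau gives $\P{\theta_{\infty}^{(k)} \notin \mathbb{A}_L(r,s,s',h_L(s',\epsilon))} \leq \epsilon$ for all $k$ large enough, while the case $k = \infty$ is already covered. The main technical hurdle is the a.s.\ divergence of $\sum 1/\tilde X_i^2$: it is the quantitative manifestation of the fact that although the spine labels escape to infinity, they do so only at the $\sqrt{i}$ scale, leaving the left Galton--Watson subtrees collectively enough room to reach arbitrarily negative labels with full probability. Attempting instead to argue directly from the explicit finite-$k$ distribution of Proposition~\ref{T: Distribution of theta(infty,k)} (using the estimates of Lemmas~\ref{T: Values of SumFW}--\ref{T: Values of SumFWStar}) would give uniformity in $k$ but seems substantially more cumbersome than the above ``limit plus portmanteau'' strategy.
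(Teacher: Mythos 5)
Your strategy matches the paper's proof closely: you pass to the limit tree $\ora{\theta_{\infty}}$ via Portmanteau (indeed, $\mathbb{A}_L(r,s,s',h)$ is determined by $B_T(h+1)$, so it is even clopen as you note), you split the failure of $\mathbb{A}_L$ into a height event and a label event, you condition on the spine and compute $\P{\min L_i > -r \mid S_i} = w(S_i+r)$ from the Chassaing--Durhuus identity, and you reduce the question to the almost-sure divergence of $\sum_i 1/(S_i+r)^2$. All of this is exactly what the paper does.

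The gap is in how you try to justify that divergence. You claim that the moment bound $\E{\tilde X_i^2}=O(i)$ together with the Lamperti scaling limit and ``the standard fact that $Z_t^2/t$ converges almost surely for a Bessel(7) process'' gives $\tilde X_i^2 \leq Ci$ for all large $i$ a.s. None of these steps hold: (i) $Z_t^2/t$ does \emph{not} converge a.s.\ for a Bessel process -- by the law of the iterated logarithm $\limsup_t Z_t/\sqrt{2t\ln\ln t}=1$ a.s., so $\limsup_t Z_t^2/t = \infty$ a.s.; (ii) even if it did, the Lamperti convergence is a distributional (Skorokhod) limit and gives no coupling of $\tilde X$ with $Z$, so pathwise a.s.\ bounds for $Z$ cannot be transported to $\tilde X$; (iii) $\E{\tilde X_i^2}=O(i)$ alone only yields a Markov bound $\P{\tilde X_i^2>ci}\leq C/c$, which is not summable, so it cannot feed a Borel--Cantelli argument. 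In fact the asserted conclusion $\tilde X_i^2\leq Ci$ a.s.\ for all large $i$ is itself false for LIL reasons.

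What the paper does instead (its Lemma~\ref{T: Upper bound on the spine labels}) is prove the slightly weaker but sufficient a.s.\ bound $S_i \leq K\sqrt{i\ln i}$ for all $i$ large enough, by a direct Chernoff-type estimate: bound $M_i \leq f(\hat X_i)\leq C\hat X_i^4$, use the explicit Laplace transform $\E{e^{\lambda\hat X_i}}=e^{i\psi(\lambda)}$ with $\psi(\lambda)\leq c\lambda^2$, optimise $\lambda$, get $\P{S_i>K\sqrt{i\ln i}}\leq C'i^{4-K^2/2c}$, and apply Borel--Cantelli for $K$ large. This gives $-\ln w(r+S_i)\gtrsim 1/(K^2 i\ln i)$ eventually a.s., and $\sum 1/(i\ln i)=\infty$, which is enough. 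You should replace your ``Lamperti + a.s.\ convergence'' step by an estimate of this kind; as written, your argument has a real hole exactly at the point that requires a genuine quantitative input.
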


\begin{lem} \label{T: Right-hand condition}
For all $\epsilon > 0$, there exists $s_R = s_R(r,\epsilon)$ such that for all $k$ large enough, possibly infinite, we have
\begin{gather*}
\P[sz]{\ol{\mathcal{A}_{R+}^{(k)} (r,s_R)}} \leq \epsilon,
\end{gather*}
where $\ol{\mathcal{A}_{R+}^{(k)} (r,s_R)}$ denotes the contrary of the event $\mathcal{A}_{R+}^{(k)} (r,s_R)$.
\end{lem}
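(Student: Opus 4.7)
The plan is to bound the event that some vertex of a right-hand subtree $\tau_{\infty,i}^{(k)}$ sitting outside the ball of radius $s$ carries a label $\leq r$, by exploiting the conditional independence of the $\tau_{\infty,i}^{(k)}$ given the label sequence $(X_{\infty,i}^{(k)})_i$ provided by Proposition \ref{T: Distribution of theta(infty,k)}.

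The key analytic input is a $1/x^3$ bound on $\GWgeom{x}^+$-trees with large root label. Using the Chassaing--Durhuus formula \eqref{E: w [Cha-Dur]} and translation invariance of the label distribution, $\P[sz]{\min_{\pt v \in T} l(\pt v) > r}$ under $\GWgeom{x}$ equals $w(x-r)$ for $x > r$; since $w(y) = 1 - 2/((y+1)(y+2))$, a direct computation gives
\[
\psi_r(x) := 1 - \frac{w(x-r)}{w(x)} = \frac{2r(2x - r + 3)}{w(x)(x+1)(x+2)(x-r+1)(x-r+2)} = O_r(x^{-3}).
\]

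With this in hand, I split the right-hand side of $\theta_\infty^{(k)}$ at some index $i_0$ and use conditional independence of the subtrees:
\[
\P[sz]{\overline{\mathcal{A}_{R+}^{(k)}(r,s)}}
\leq \P[sz]{\max_{1 \leq i \leq i_0} \mathrm{ht}(\tau_{\infty,i}^{(k)}) > s - i_0}
+ \E[sz]{\sum_{i > i_0} \psi_r(X_{\infty,i}^{(k)})}.
\]
For the first term, each $\GWgeom{x}^+$-tree is almost surely finite ($\geom(1/2)$ being critical), and the joint convergence of $(X_{\infty,i}^{(k)}, \tau_{\infty,i}^{(k)})_{1 \leq i \leq i_0}$ extracted from Theorem \ref{T: Joint cv of the rerooted trees} lets me choose $H$ such that, uniformly in $k$ large, all $i_0$ subtrees have height $\leq H$ with probability $\geq 1 - \epsilon/2$; the lemma then follows by taking $s_R \geq i_0 + H$. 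For the second term, in the case $k = \infty$ Lemma \ref{T: Values of SumFW} gives the Green function $\SumFW{1}{x} = 3(2x+3)/10$, so
\[
\E[sz]{\sum_{i \geq 1} \psi_r(\tilde X_{i-1})}
\leq C_r \sum_{x \geq 1} \frac{\SumFW{1}{x}}{x^3}
= \frac{3 C_r}{10} \sum_{x \geq 1} \frac{2x+3}{x^3} < \infty,
\]
and the tail past $i_0$ vanishes as $i_0 \to \infty$. For finite $k$, I would use the explicit marginal $\P[sz]{m_\infty^{(k)} = m,\, X_{\infty,i}^{(k)} = x}$ from Proposition \ref{T: Distribution of theta(infty,k)}, which factorises into walk-weights from $1$ to $x$ and from $x$ to $k$; summing over $m \geq i$ yields an occupation estimate at level $x$ dominated by, and converging (as $k \to \infty$) to, the $k = \infty$ value.

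The main obstacle is this last uniform-in-$k$ comparison: the label walk of $\theta_\infty^{(k)}$ is, for finite $k$, a bridge of $\tilde X$ from $1$ to $k$ rather than the free chain, so I must verify that conditioning on the terminal value cannot inflate occupation times at intermediate levels $x < k$. Once this monotonicity (or its quantitative counterpart sufficient for the uniform bound) is granted, the remaining steps are routine book-keeping with the estimates above.
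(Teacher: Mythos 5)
Your decomposition of $\overline{\mathcal{A}_{R+}^{(k)}(r,s)}$ by subtree index $i_0$ (heights of the first $i_0$ subtrees, then labels of the later ones bounded by $\psi_r(X_{\infty,i}^{(k)})$) is a genuinely different route from the paper, which instead cuts the right-hand part at height $s$: it bounds the number of boundary vertices of $R_\infty^{(k)}(s)$ in expectation (Lemma \ref{T: Bound on E[nb of vertices at height s]}), shows with high probability that all boundary labels exceed $s^{\alpha}$ for $\alpha\in(1/3,1/2)$ (Lemma \ref{T: Bound on P(theta not in A-tilde)}, resting on the lower bound for spine labels of Lemma \ref{T: Lower bound on the spine labels}), and then applies the explicit conditional probabilities from the preceding lemma to conclude with a $O(rs^{1-3\alpha})$ bound. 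Your first bound (heights) is fine: the convergence of $\theta_\infty^{(k)}$ fixes the balls, hence the subtrees $\tau_{\infty,i}^{(k)}$ for $i\le i_0$, and the event $\P{I^{(k)}<i_0}\le i_0/(k+1)$ is negligible. Your computation of $\psi_r$ is also correct.

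However, there is a genuine gap precisely where you defer it, and it is not routine. Your stated monotonicity ``occupation at level $x$ for finite $k$ is dominated by the $k=\infty$ value'' is actually false. The label sequence for finite $k$ is not a bridge but a $(m+1)$-size-biased stopped chain, and this size-biasing strictly inflates the occupation. Writing, for $1\le x\le k$,
\[
\E\Big[\sum_{i\ge 1}\ind{X^{(k)}_{\infty,i}=x}\Big]
= \frac{f(1)w(k)}{3f(k)}\Big[\SumFW{1}{x}\,\SumFWStar{x}{k}+\big(\SumFWStar{1}{x}-2\SumFW{1}{x}\big)\SumFW{x}{k}\Big]
\]
and substituting the formulas from Lemmas \ref{T: Values of SumFW}--\ref{T: Values of SumFWStar}, one finds that the expected occupation equals
\[
\SumFW{1}{x}\Big(1+\tfrac{f(1)w(k)}{10\,k(k+3)}\cdot\tfrac{4(x+1)(x+2)-10}{14}\Big),
\]
which is strictly larger than $\SumFW{1}{x}$ for every $x\ge 1$ and every finite $k$. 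So the proposed domination fails. What one can prove is that the correction factor is $1+O(x^2/k^2)$, hence when weighted by $\psi_r(x)=O(x^{-3})$ and summed over $x\le k$ contributes $O(1/k)$, and this does give the uniform-in-$k$ bound. But establishing it requires the full strength of the $\SumFW$ and $\SumFWStar$ formulas in both regimes ($x\le k$ and $x>k$) and a case analysis comparable in weight to the paper's own argument, and moreover you then still need the tail estimate (not the total occupation): $\E[\sum_{i>i_0}\psi_r(X^{(k)}_{\infty,i})]$ small, which again requires applying the Markov property at time $i_0$ and redoing the $\SumFWStar/\SumFW$ computation relative to the random starting point $\tilde X_{i_0}$. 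None of this is supplied. The proposal is a plausible programme but not a proof; the part you flag as ``routine book-keeping'' is exactly the hard part, and the domination you invoke does not hold.
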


The proofs of these results are given in Sections \ref{S: Left-hand condition} and \ref{S: Right-hand condition}, respectively. A first step consists in studying the spine labels of $\ora{\theta_{\infty}}$: this is what we do in Section \ref{S: Spine labels}. In Section \ref{S: Conclusion}, we finally put all these ingredients together to complete the proof of Proposition \ref{T: Ball inclusions}.

\subsection{Two properties of the spine labels} \label{S: Spine labels}

In this section, we show two lemmas on the spine labels $S_i(\ora{\theta_{\infty}})$. The first one gives an upper bound which holds almost surely, for all $i$ large enough. The second one gives a lower bound which holds with high probability.

\begin{lem} \label{T: Upper bound on the spine labels}
There exists a constant $K$ such that almost surely, for all $i$ large enough, we have
\begin{gather*}
S_i(\ora{\theta_{\infty}}) \leq K \sqrt{i \ln(i)}.
\end{gather*}
\end{lem}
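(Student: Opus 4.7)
The plan is to prove the bound for the Markov chain $\tilde{X}$ started from $1$, since by Theorem~\ref{T: Joint cv of the rerooted trees} the spine-label process $(S_i(\ora{\theta_{\infty}}))_{i \geq 1}$ has the same law as $\tilde{X}$ started from $1$.

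First I would establish polynomial moment bounds $\E{\tilde{X}_i^{2p}} = O(i^p)$ for every integer $p \geq 1$. Writing $\tilde{X}_{i+1} = \tilde{X}_i + \Delta_i$ with $\Delta_i \in \{-1,0,1\}$ and expanding the $2p$-th power, one obtains
\begin{gather*}
\E[sz]{\tilde{X}_{i+1}^{2p} - \tilde{X}_i^{2p} \mid \mathcal{F}_i} = 2p\, \tilde{X}_i^{2p-1}(p_{\tilde{X}_i}-q_{\tilde{X}_i}) + p(2p-1)\, \tilde{X}_i^{2p-2}(p_{\tilde{X}_i}+q_{\tilde{X}_i}) + O(\tilde{X}_i^{2p-3}).
\end{gather*}
The asymptotics $p_x - q_x = 2/x + O(1/x^2)$ and $p_x + q_x = 2/3 + O(1/x)$ as $x \to \infty$, together with a finite check at small $x$, yield a uniform bound
\begin{gather*}
0 \leq \E[sz]{\tilde{X}_{i+1}^{2p} - \tilde{X}_i^{2p} \mid \mathcal{F}_i} \leq C_p \(1+\tilde{X}_i^{2p-2}\) \qquad \forall\, \tilde{X}_i \geq 1.
\end{gather*}
The left inequality says that $(\tilde{X}_i^{2p})_{i\geq 0}$ is a non-negative submartingale; the right inequality gives the moment estimate by induction on $p$ and $i$.

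Second, Doob's $L^p$-maximal inequality applied to the submartingale $(\tilde{X}_j^2)^p$ yields $\E{\max_{j \leq i} \tilde{X}_j^{2p}} \leq C'_p\, i^p$. Specializing to $p = 2$ and applying Markov's inequality on the dyadic subsequence $i = 2^k$ gives, for any $K > 0$,
\begin{gather*}
\P[sz]{\max_{j \leq 2^k} \tilde{X}_j > K \sqrt{2^k\, k}} \leq \frac{C'_2 \cdot 2^{2k}}{K^4 \(2^k\, k\)^2} = \frac{C'_2}{K^4 k^2},
\end{gather*}
which is summable in $k$. Borel--Cantelli then implies that almost surely $\max_{j \leq 2^k} \tilde{X}_j \leq K\sqrt{2^k\, k}$ for all $k$ large enough, and since any large $i$ lies in some interval $[2^{k-1}, 2^k]$, the bounds $2^k \leq 2i$ and $k \leq \log_2 i + 1$ yield $\tilde{X}_i \leq K'' \sqrt{i \ln i}$ for all $i$ sufficiently large.

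The main obstacle is carrying out the moment expansion uniformly in $\tilde{X}_i \geq 1$: the asymptotics of $p_x, q_x, r_x$ only give the $C_p(1+\tilde{X}_i^{2p-2})$ bound for $x$ large, and one has to combine them with a finite check at small $x$ to control all terms at once. The submartingale property itself can also be derived directly from the convexity of $y \mapsto y^{2p}$ together with the fact that $p_x \geq q_x$ for all $x \geq 1$.
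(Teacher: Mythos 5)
Your proof is correct, but it takes a genuinely different route from the paper's. The paper exploits the fact that $\tilde{X}$ is by construction a Doob $h$-transform of the simple random walk $\hat{X}$ with uniform steps in $\{-1,0,1\}$: it writes $\P[sz]{S_i(\ora{\theta_\infty}) > a} = \E[sz]{M_i \ind{\hat{X}_i > a}}$, bounds the density $M_i \leq f(\hat{X}_i)$ by a polynomial, and then applies an exponential Markov (Chernoff) inequality directly to $\hat{X}$, which has sub-Gaussian increments. Optimizing the exponential parameter gives a tail bound of the form $C' i^{4 - K^2/(2c)}$, summable for $K$ large, and Borel--Cantelli finishes without any maximal inequality or dyadic blocking. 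You instead work intrinsically with the chain $\tilde{X}$: you establish the drift estimate $\E[sz]{\tilde{X}_{i+1}^{2p} - \tilde{X}_i^{2p}\mid\mathcal F_i} \leq C_p(1+\tilde X_i^{2p-2})$ (which does hold, using $p_x - q_x = 2/x + O(1/x^2)$ and a finite check for small $x$), deduce $\E{\tilde{X}_i^{2p}} = O(i^p)$ by induction, invoke Doob's $L^p$ maximal inequality for the nonnegative submartingale $(\tilde{X}_j^2)_j$ (the submartingale property indeed follows from $p_x \geq q_x$ and convexity), and then run Borel--Cantelli along the dyadic sequence $i = 2^k$. Your route is more elementary in that it never needs to know that $\tilde X$ is a measure-change of a nicer process, and it would work verbatim for any Bessel-like chain with drift $\sim c/x$ and bounded steps; the price is that it requires the maximal inequality and the dyadic blocking, which the paper's change-of-measure trick sidesteps. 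Both arguments are valid and yield the same conclusion with an unspecified constant $K$.
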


\begin{proof}
Recall that the distribution of $(S_i(\ora{\theta_{\infty}}))_{i \geq 0}$ is given in Theorem \ref{T: Joint cv of the rerooted trees}. Let $K > 0$ and $i \geq 1$. Recall that
\begin{gather*}
\P[sz]{S_i(\ora{\theta_{\infty}}) > K \sqrt{i \ln(i)}} = \E[sz]{M_i \ind{\hat{X}_i > K \sqrt{i\ln(i)} }},
\end{gather*}
where $\hat{X}$ is a random walk with uniform steps in $\{-1,0,1\}$ and $M$ is the martingale defined by
\begin{gather*}
M_i = \frac{f(\hat{X}_i)}{f(\hat{X}_0)} \prod_{j=0}^{i-1} w(\hat{X}_j).
\end{gather*}
Note that $M_i \leq f(\hat{X}_i)$ almost surely. Thus, for all $\lambda > 0$, we have
\begin{align} \label{E: Bound on P(S_i > K sqrt(i ln(i)))}
\P[sz]{S_i(\ora{\theta_{\infty}}) > K \sqrt{i \ln(i)}}
& \leq \E[sz]{M_i e^{-\lambda K \sqrt{i\ln(i)}} e^{\lambda \hat{X}_i}} \nonumber \\
& \leq C e^{-\lambda K \sqrt{i\ln(i)}} \E[sz]{\hat{X}_i^4 e^{\lambda \hat{X}_i}},
\end{align}
where $C$ denotes a constant such that $f(x) = x(x+3)(2x+3) \leq C x^4$ for every $x \geq 0$. Now $\E[sz]{\hat{X}_i^4 e^{\lambda \hat{X}_i}}$ is the fourth derivative of $\E[sz]{e^{\lambda \hat{X}_i}}$, and we have
\begin{gather*}
\E[sz]{e^{\lambda \hat{X}_i}} = e^{i \psi(\lambda)},
\end{gather*}
where $\psi(\lambda)$ denotes the Laplace transform of a uniform random variable in $\{-1,0,1\}$. Now we have
\begin{gather*}
\psi(\lambda) = \ln\( \frac{1+2\cosh(\lambda)}{3} \) \leq c \lambda^2
\end{gather*}
for a suitable constant $c>0$, and that the first four derivatives of $\psi$ are bounded. Therefore, there exists a positive constant $C'$ such that
\begin{gather*}
C \E[sz]{\hat{X}_i^4 e^{\lambda \hat{X}_i}} \leq C' i^4 e^{i \psi(\lambda)} \leq C' i^4 e^{ci \lambda^2} \qquad \forall \lambda >0.
\end{gather*}
Putting this together with \eqref{E: Bound on P(S_i > K sqrt(i ln(i)))}, we get
\begin{gather*}
\P[sz]{S_i(\ora{\theta_{\infty}}) > K \sqrt{i \ln(i)}}
\leq C' i^4 e^{ci\lambda^2-\lambda K \sqrt{i\ln(i)}} \qquad \forall \lambda > 0.
\end{gather*}
Choosing the optimal value $\lambda = K \sqrt{\ln(i)}/ (2c\sqrt{i})$ gives
\begin{gather*}
\P[sz]{S_i(\ora{\theta_{\infty}}) > K \sqrt{i \ln(i)}}
\leq C' i^4 e^{-(K^2/2c) \ln(i)} = C' i^{4-K^2/2c}.
\end{gather*}
As a consequence, for all $K$ large enough (such that $4-K^2/2c <-1$), the sum of the probabilities $\P[sz]{S_i(\ora{\theta_{\infty}}) > K \sqrt{i \ln(i)}}$ is finite. Applying the Borel--Cantelli lemma concludes the proof.
\end{proof}

\begin{lem} \label{T: Lower bound on the spine labels}
For all $\eta > 0$, there exists $\delta > 0$ such that for all $s$ large enough, we have
\begin{gather*}
\P[sz]{\exists i \geq \ent{\eta s}: S_i(\ora{\theta_{\infty}}) < \ent{\delta \sqrt{s}}} \leq \eta.
\end{gather*}
\end{lem}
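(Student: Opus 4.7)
The plan is to combine the diffusive scaling of $\tilde{X}$ (Lamperti's theorem, recalled just before Theorem~\ref{T: Joint cv of the rerooted trees}) with the explicit transience estimate established in the proof of Lemma~\ref{T: Values of SumFW}, namely
\begin{gather*}
\Pfrom{x}{\tilde{T}_k < \infty} = \frac{h(k)}{h(x)} \qquad \forall\, x > k \geq 0,
\end{gather*}
where $h(y) = y(y+1)(y+2)(y+3)(2y+3) \sim 2y^5$ as $y \to \infty$. Recall that by Theorem~\ref{T: Joint cv of the rerooted trees}, $(S_i(\ora{\theta_\infty}))_{i \geq 1}$ has the distribution of the Markov chain $\tilde{X}$ started from $1$, so the probability to be bounded is $\Pfrom{1}{\exists j \geq \ent{\eta s}-1 : \tilde{X}_j < \ent{\delta\sqrt{s}}}$.

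First I would use Lamperti's scaling limit to localize the chain at time $\ent{\eta s}$. The convergence $(\tilde{X}_{\ent{s t}}/\sqrt{s})_{t \geq 0} \to (Z_{2t/3})_{t \geq 0}$ implies in particular one-dimensional convergence of $\tilde{X}_{\ent{\eta s}}/\sqrt{s}$ to the random variable $Z_{2\eta/3}$, which is strictly positive almost surely since a Bessel$(7)$ process is. Consequently, for any $\epsilon > 0$ there exists $A = A(\eta,\epsilon) > 0$ such that
\begin{gather*}
\Pfrom{1}{\tilde{X}_{\ent{\eta s}} < A\sqrt{s}} \leq \epsilon
\end{gather*}
for all $s$ large enough.

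Next I would apply the strong Markov property at time $\ent{\eta s}$. Because $\tilde{X}$ has increments in $\{-1, 0, 1\}$, the event of ever reaching $\{0, 1, \ldots, \ent{\delta\sqrt{s}}-1\}$ after time $\ent{\eta s}$ coincides with $\{\tilde{T}_{\ent{\delta\sqrt{s}} - 1} < \infty\}$, and, for $\delta < A$ and any starting value $x \geq A\sqrt{s}$, the transience estimate above gives
\begin{gather*}
\Pfrom{x}{\tilde{T}_{\ent{\delta\sqrt{s}}-1} < \infty} = \frac{h(\ent{\delta\sqrt{s}}-1)}{h(x)} \leq \frac{h(\ent{\delta\sqrt{s}}-1)}{h(\ent{A\sqrt{s}})},
\end{gather*}
which tends to $(\delta/A)^5$ as $s \to \infty$ by the asymptotics of $h$.

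To conclude, I would first choose $\epsilon = \eta/2$, which fixes $A = A(\eta)$, and then pick $\delta$ small enough (depending only on $\eta$) so that $(\delta/A)^5 \leq \eta/4$; the two contributions above then sum to at most $\eta$ for all $s$ large. I do not expect a serious obstacle: both ingredients are already available in the paper, and the only mildly delicate point is the passage from Lamperti's functional convergence to the one-time marginal convergence at $t = \eta$, which is a routine consequence of weak convergence together with the continuity of the distribution of $Z_{2\eta/3}$.
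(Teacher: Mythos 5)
Your proposal is correct and follows essentially the same route as the paper's own proof: diffusive localization at time $\ent{\eta s}$ via Lamperti, then the strong Markov property combined with the explicit transience formula $\Pfrom{x}{\tilde{T}_{x'} < \infty} = h(x')/h(x)$ from the proof of Lemma~\ref{T: Values of SumFW}. The paper parametrizes the final step slightly differently (comparing the hitting level to a multiple of $x$ rather than directly to $\sqrt{s}$, and packaging the constants as $\delta = \delta_2 \sqrt{\delta_1 \eta}$), but the argument is the same.
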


\begin{proof}
Since $(S_i(\ora{\theta_{\infty}}))_{i \geq 0}$ has the same distribution as $(\tilde{X}_i)_{i \geq 0}$ with $\tilde{X}_0=1$, it is enough to show that
\begin{gather*}
\Pfrom[sz]{1}{\exists i \geq \ent{\eta s}: \tilde{X}_i < \ent{\delta \sqrt{s}}} \leq \eta.
\end{gather*}
Recall that, as stated in the introduction, we have the convergence
\begin{gather*}
\( \frac{1}{\sqrt{n}} \tilde{X}_{\ent{nt}}\)_{t \geq 0} \cvg{n \rightarrow \infty}{(d)} (Z_{2t/3})_{t \geq 0},
\end{gather*}
where $Z$ denotes a seven-dimensional Bessel process. As a consequence, there exists constants $\delta_1 > 0$ and $s_1 \in \N$ such that, for all $s \geq s_1$, we have
\begin{gather*}
\Pfrom[sz]{1}{\tilde{X}_{\ent{\eta s}} \leq \sqrt{\eta s} \cdot \delta_1} \leq \frac{\eta}{2}.
\end{gather*}
Fix $s \geq s_1$. Using the Markov property at time $\ent{\eta s}$, for any $\delta > 0$, we can now write
\begin{align}
\Pfrom[sz]{1}{\exists i \geq \ent{\eta s}: \tilde{X}_i < \ent{\delta \sqrt{s}}}
& \leq \frac{\eta}{2} + \sum_{x \geq \sqrt{\eta s} \cdot \delta_1} \Pfrom{1}{\tilde{X}_{\ent{\eta s}} = x} \Pfrom[sz]{x}{\exists i \geq 0: \tilde{X}_i < \ent{\delta \sqrt{s}}} \nonumber \\
& \leq \frac{\eta}{2} + \sum_{x \geq \sqrt{\eta s} \cdot \delta_1} \Pfrom{1}{\tilde{X}_{\ent{\eta s}} = x} \Pfrom[sz]{x}{\exists i \geq 0: \tilde{X}_i < \frac{\delta}{\delta_1 \sqrt{\eta}} x} \nonumber \\
& = \frac{\eta}{2} + \sum_{x \geq \sqrt{\eta s} \cdot \delta_1} \Pfrom{1}{\tilde{X}_{\ent{\eta s}} = x} \Pfrom[sz]{x}{\tilde{T}_{\ent{\delta x/\delta_1 \sqrt{\eta}}} < \infty}, \label{E: Partial lower bound on the spine labels}
\end{align}
where $\tilde{T}_{x'}$ denotes the first hitting time of $x'$ for $\tilde{X}$. It was shown in the proof of Lemma \ref{T: Values of SumFW} that for all $x \geq x'$, we have
\begin{gather*}
\Pfrom{x}{\tilde{T}_{x'} < \infty} = \frac{h(x')}{h(x)},
\end{gather*}
for a given non-constant polynomial $h$. Thus there exists constants $\delta_2$  and $x_2$ such that for all $x \geq x_2$, we have
\begin{gather*}
\Pfrom{x}{\tilde{T}_{\ent{\delta_2 x}} < \infty} \leq \frac{\eta}{2}.
\end{gather*}
Putting this together with \eqref{E: Partial lower bound on the spine labels}, for $\delta = \delta_2 \sqrt{\delta_1 \eta}$ and $s \geq s_1 \wedge (x_2^2/(\eta \delta_1^2))$, we get
\begin{align*}
\Pfrom[sz]{1}{\exists i \geq \ent{\eta s}: \tilde{X}_i < \ent{\delta \sqrt{s}}}
& \leq \frac{\eta}{2} \(1 + \sum_{x \geq \sqrt{\eta s} \cdot \delta_1} \Pfrom{1}{\tilde{X}_{\ent{\eta s}} = x} \) \leq \eta.
\end{align*}
\end{proof}

\subsection{Proof of the left-hand condition} \label{S: Left-hand condition}

In this section, we give the proof of Lemma \ref{T: Left-hand condition}. This result mainly uses the upper bound on the spine labels of $\ora{\theta_{\infty}}$, and the explicit expressions of the distribution of $L_i(\ora{\theta_{\infty}})$, for $i \geq 0$.

\begin{proof}[Proof of Lemma \ref{T: Left-hand condition}]
Since for all $s,s',h$, $\lstrees \setminus \mathbb{A}_L (r,s,s',h)$ is a closed set, we have
\begin{gather*}
\limsup_{k \rightarrow \infty} \P[sz]{\theta_{\infty}^{(k)} \notin \mathbb{A}_L (r,s,s',h)} \leq \P[sz]{\ora{\theta_{\infty}} \notin \mathbb{A}_L (r,s,s',h)},
\end{gather*}
so it is enough to show that the Lemma holds with $\ora{\theta_{\infty}}$ instead of $\theta_{\infty}^{(k)}$. For all $s,s',h \in \N$, we have
\begin{gather*}
\P[sz]{\ora{\theta_{\infty}} \notin \mathbb{A}_L (r,s,s',h)} \leq p_{r,s,s'} + \P[sz]{\exists i \leq s': L_i(T) \mbox{ has height greater than } h-s'},
\end{gather*}
where
\begin{gather*}
p_{r,s,s'} := \P[sz]{\forall i \in \{s+1,\ldots,s'\}, \min_{\pt{x} \in L_i (\ora{\theta_{\infty}})} \ora{l_{\infty}} (\pt{x}) > -r }.
\end{gather*}
Since for all $s'$, there exists $h$ such that the maximum of the heights of the Galton--Watson trees $L_0(\ora{\theta_{\infty}}), \ldots L_{s'}(\ora{\theta_{\infty}})$ is less than $h-s'$ with probability greater than $1-\epsilon/2$, it is enough to prove that the probabilities $p_{r,s,s'}$ converge to $0$ as $s' \rightarrow \infty$.

We first rewrite $p_{r,s,s'}$ using the spine-labels $S_i(\ora{\theta_{\infty}})$:
\begin{align*}
p_{r,s,s'}
& = \E[sz]{\prod_{i=s+1}^{s'} \GWgeom{S_i(\ora{\theta_{\infty}})} \{ (T,l) \in \ltrees{}: l > -r \} } \\
& = \E[sz]{\prod_{i=s+1}^{s'} \GWgeom{r+S_i(\ora{\theta_{\infty}})} \(\ltrees{}^+\) }
  = \E[sz]{\prod_{i=s+1}^{s'} w(r+S_i(\ora{\theta_{\infty}}))}.
\end{align*}
The above product is almost surely decreasing as $s' \rightarrow \infty$. Therefore, we only have to show that
\begin{gather*}
\prod_{i=s+1}^{s'} w(r+S_i(\ora{\theta_{\infty}})) \cvg{s' \rightarrow \infty}{} 0,
\end{gather*}
or equivalently that
\begin{gather} \label{E: Divergent series}
\sum_{i=s+1}^{s'} -\ln\(w(r+S_i(\ora{\theta_{\infty}}))\) \cvg{s' \rightarrow \infty}{} + \infty
\end{gather}
almost surely. Since $S_i(\ora{\theta_{\infty}}) \rightarrow + \infty$ almost surely, we can use the estimate
\begin{gather*}
w(x) = 1 -\frac{2}{x^2} + o\(\frac{1}{x^2}\).
\end{gather*}
This yields
\begin{gather*}
-\ln\(w(r+S_i(\ora{\theta_{\infty}}))\) \sim_{i \rightarrow \infty} \frac{2}{\(r+S_i(\ora{\theta_{\infty}})\)^2}.
\end{gather*}
Lemma \ref{T: Upper bound on the spine labels} now ensures that the right-hand term is $\as$ larger than $2/(K^2 i \ln(i))$ for all $i$ large enough, hence the $\as$ divergence \eqref{E: Divergent series}.
\end{proof}

\subsection{Proof of the right-hand condition} \label{S: Right-hand condition}

This section is devoted to the proof of Lemma \ref{T: Right-hand condition}. Note that the structure of the proof is close to Ménard \cite{Men}. More precisely, the lower bound we already proved in Lemma \ref{T: Lower bound on the spine labels} corresponds to a result Ménard obtains by putting together Lemma 2 and Proposition 5 of \cite{Men}, and Lemma \ref{T: Bound on P(theta not in A-tilde)} corresponds to Lemma 5 of \cite{Men}.

We begin by computing the probability $\P{R_{\infty}^{(k)}(s) = \theta^{\ast}}$, and some conditional probabilities on this event, for all suitable trees $\theta^{\ast}$. More precisely, let $\ltrees{[s]}^R$ denote the set of the labeled trees $(T,l) \in \ltrees{}(0)$ such that:
\begin{itemize}
\item The root of $T$ has exactly one offspring.
\item All labels in $T$ are positive, except the root-label.
\item The height of $T$ is $s$.
\item There are no vertices on the left of the path from the root to $\pt{x}_s$, where $\pt{x}_s$ denotes the leftmost vertex having height $s$. In other words, if $\pt{x}_0, \ldots, \pt{x}_s$ are the vertices on the path from the root to $\pt{x}_s$, then for all $\pt{x} \in T^{\ast} \setminus \{\pt{x}_0, \ldots, \pt{x}_s\}$, we have $\pt{x} > \pt{x}_s$ (where $<$ denotes the depth-first order).
\end{itemize}
Fix $\theta^{\ast} \in \ltrees{[s]}^R$. We let $\pt{x}_s = \pt{y}_1 < \ldots < \pt{y}_{n^{\ast}}$ denote the vertices of $\theta^{\ast}$ which have height $s$. For all $i \in \{0,\ldots,s-1\}$, we let $\tau_i^{\ast}$ denote the subtree formed by the vertices $\pt{x} \in T^{\ast}$ such that $\pt{x}_i \preceq \pt{x}$ and $\pt{x}_{i+1} \npreceq \pt{x}$ (note that $\tau_0^{\ast} = \{\pt{x}_0\}$). Finally, for all suitable $i$, we let $x_i = l^{\ast} (\pt{x}_i)$ and $y_i = l^{\ast}(\pt{y}_i)$. We have the following results:

\begin{lem}
Let $k > s+r$. With the above notation, we have
\begin{gather} \label{E: Distribution of RB(theta(infty,k)) (2)}
\P[sz]{R_{\infty}^{(k)}(s) = \theta^{\ast}}
= W_s(y_1,k) \frac{2^{n^{\ast}-1}}{6^{s-1} 12^{|T^{\ast}|-s}}\prod_{j=2}^{n^{\ast}} w(y_j),
\end{gather}
where
\begin{gather*}
W_s(x,k) = \frac{f(x)}{f(1)} \(1- \frac{C_x-s+1}{10(k+1)(k+2)}\) \qquad \forall x \leq s < k.
\end{gather*}
Moreover, this yields the conditional probabilities
\begin{gather} \label{E: P(no small label above y_1 on the right)}
\P[sz]{\min_{\bigcup_{i \geq s} \tau_{\infty,i}^{(k)}} l_{\infty}^{(k)} > r \bigg| R_{\infty}^{(k)}(s) = \theta^{\ast} }
= \frac{W_s(y_1-r,k-r)}{W_s(y_1,k)}
\end{gather}
and
\begin{gather} \label{E: P(no small label above y_j, j>2)}
\P[sz]{\min_{\pt{y}_j \preceq \pt{v}} l_{\infty}^{(k)}(\pt{v}) > r \bigg| R_{\infty}^{(k)}(s) = \theta^{\ast} }
= \frac{w(y_j-r)}{w(y_j)} \qquad \forall j \in \{2,\ldots,n^{\ast}\}.
\end{gather}
\end{lem}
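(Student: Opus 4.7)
The plan is to decompose the event $\{R_{\infty}^{(k)}(s) = \theta^{\ast}\}$ according to the structure of $\theta^{\ast}$: (a) the first $s$ spine labels of $\theta_{\infty}^{(k)}$ match those of $\theta^{\ast}$, namely $(X_{\infty,j}^{(k)})_{1 \leq j \leq s} = (x_1, \ldots, x_s)$ with $x_s = y_1$, and (b) for each $j = 1, \ldots, s-1$, the subtree $\tau_{\infty,j}^{(k)}$ truncated at height $s-j$ equals $\tau_j^{\ast}$. No further constraint is needed on $\tau_{\infty,s}^{(k)}$ beyond its root label $y_1$, and the two parts factor by the independence structure of Proposition \ref{T: Distribution of theta(infty,k)}. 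I will multiply these factors and simplify.

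For (a), summing over all valid extensions $(x_{s+1}, \ldots, x_m)$ with $x_m = k$ in the formula of Proposition \ref{T: Distribution of theta(infty,k)} yields
\begin{gather*}
\P{X_{\infty,j}^{(k)} = x_j, \forall\, 1 \leq j \leq s} = \frac{1}{3^s} \prod_{j=1}^s w(x_j) \cdot \sum_{m' \geq 0} \frac{m'+s+1}{3^{m'}} \sum_{(x'_1, \ldots, x'_{m'}) \in \mathcal{M}^+_{m', y_1 \rightarrow k}} \prod_{i=1}^{m'} w(x'_i).
\end{gather*}
Applying the martingale change of measure $\hat{X} \to \tilde{X}$ used in Section \ref{S: First dist eqns}, the inner double sum equals $\frac{f(y_1) w(k)}{f(k) w(y_1)} (\SumFWStar{y_1}{k} + (s-1)\SumFW{y_1}{k})$. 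Plugging in Lemmas \ref{T: Values of SumFW} and \ref{T: Values of SumFWStar} in the regime $y_1 \leq s < k$ and using the identity $w(k)(2k+3)/f(k) = 1/((k+1)(k+2))$, this collapses to $\frac{3}{w(y_1)} W_s(y_1, k)$.

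For (b), the conditional distribution $\GWgeom{x_j}^+$ of $\tau_{\infty,j}^{(k)}$ together with $\GWgeom{x}(\{\theta\}) = 1/(2 \cdot 12^{|\theta|})$ yields, via a direct Geom$(1/2)$-branching computation,
\begin{gather*}
\P{\tau_{\infty,j}^{(k)} \text{ truncated at height } s-j = \tau_j^{\ast}} = \frac{2^{|V_{h,j}|-1}}{12^{|\tau_j^{\ast}|} w(x_j)} \prod_{\pt{v} \in V_{h,j}} w(l^{\ast}(\pt{v})),
\end{gather*}
where $V_{h,j}$ is the set of vertices of $\tau_j^{\ast}$ at height $s-j$. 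Taking the product over $j$ and combining with (a), the $1/w(x_j)$ factors cancel most of the spine contribution (leaving $w(y_1)/3$ to combine with the prefactor of (a) as $W_s(y_1,k)$); the boundary labels collect as $\prod_{j=2}^{n^{\ast}} w(y_j)$, using that $\pt{y}_2, \ldots, \pt{y}_{n^{\ast}}$ partition across the $V_{h,j}$ with $\pt{y}_1 = \pt{x}_s$ excluded; and the edge/vertex counts simplify to the combinatorial prefactor $2^{n^{\ast}-1}/(6^{s-1} \cdot 12^{|T^{\ast}|-s})$, yielding \eqref{E: Distribution of RB(theta(infty,k)) (2)}.

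For the two conditional probabilities, I re-run the same calculation with added constraints. For \eqref{E: P(no small label above y_1 on the right)}, the extra event requires all labels of $\bigcup_{i \geq s} \tau_{\infty,i}^{(k)}$ to exceed $r$. Using the key identity that under $\GWgeom{x}^+$ the probability all labels exceed $r$ equals $w(x-r)/w(x)$ (for $x > r$), and applying the substitution $x \mapsto x - r$ along the spine beyond position $s$ and in the corresponding subtree root labels, the tail portion of the joint distribution becomes structurally identical but with starting point $y_1 - r$ and endpoint $k - r$. This produces $W_s(y_1 - r, k - r)$ in place of $W_s(y_1, k)$ in the numerator, while the $\theta^{\ast}$-dependent factors $\frac{2^{n^{\ast}-1}}{6^{s-1} 12^{|T^{\ast}|-s}} \prod_{j=2}^{n^{\ast}} w(y_j)$ are unchanged, giving the claimed ratio. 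For \eqref{E: P(no small label above y_j, j>2)}, the standard conditioning property of Galton--Watson trees on a truncation implies that, conditionally on $R_{\infty}^{(k)}(s) = \theta^{\ast}$, the subtree of $\theta_{\infty}^{(k)}$ rooted at each boundary vertex $\pt{y}_j$ ($j \geq 2$) is an independent $\GWgeom{y_j}^+$-tree, so the probability that all its labels exceed $r$ is $w(y_j - r)/w(y_j)$. The main technical obstacle is tracking all multiplicative constants precisely, which relies on the identity above for $w(k)(2k+3)/f(k)$ together with careful bookkeeping of the factor $f(1) = 20$.
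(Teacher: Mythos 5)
Your proposal follows the same decomposition and the same computations as the paper's own proof: condition on the spine labels $(x_1,\dots,x_s)$ and on $B_{\tau_{\infty,i}^{(k)}}(s-i)=\tau_i^{\ast}$ for $i<s$, evaluate the contribution of the tail of the spine by the martingale change of measure together with Lemmas \ref{T: Values of SumFW} and \ref{T: Values of SumFWStar}, and derive the two conditional probabilities from the label-shift identity $\GWgeom{x}^+(\min l>r)=w(x-r)/w(x)$ applied respectively to all subtrees beyond height $s$ and to the single subtree above $\pt{y}_j$. One small caution on the bookkeeping: in your displayed intermediate formula the prefactor $\prod_{j=1}^{s}w(x_j)$ and the inner product over $\mathcal{M}^+_{m',y_1\rightarrow k}$ each contain a factor $w(y_1)$, and the exponent of $3$ and the coefficient $m'+s+1$ are off by one relative to that parametrisation; these slips are compensated by the extra $1/w(y_1)$ you insert afterwards, so the final expression is right, but the convention should be fixed cleanly (the paper does this by keeping $\prod_{i=1}^{s-1}w(x_i)$ outside and starting the expectation at $\hat X_0=x_s$).
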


Note that it is easy to see that these equations also hold for $k=\infty$, with $RB_{\ora{\theta_{\infty}}}(s)$ instead of $R_{\infty}^{(k)}(s)$ and $W_s(x,\infty) = f(x)/f(1)$ for all $x \leq s$.

\begin{proof}
Note that we have $x_s=y_1$; in the first two steps of the proof, it is more natural to use the notation $x_s$. The characterization of the distribution of $\theta_{\infty}^{(k)}$ given in Proposition \ref{T: Distribution of theta(infty,k)} yields
\begin{gather} \label{E: Distribution of RB(theta(infty,k)) (1)}
\P[sz]{R_{\infty}^{(k)}(s) = \theta^{\ast}}
= \P[sz]{(X_{\infty,1}^{(k)}, \ldots, X_{\infty,s}^{(k)}) = (x_1,\ldots,x_s) } \prod_{i=1}^{s-1} \GWgeom{x_i}^+ \( \theta: B_{\theta}(s-i) = \tau_i^{\ast} \).
\end{gather}
Furthermore, the computations of Section \ref{S: First dist eqns} show that
\begin{align*}
\P[sz]{X_{\infty,i}^{(k)} = x_i,\ \forall i \leq s }
& = \sum_{m \geq s} \frac{m+1}{3^s} \prod_{i=1}^{s-1} w(x_i)\ \Efrom[sz]{x_s}{\prod_{i=0}^{m-s} w(\hat{X}_i) \ind{\hat{X}_{m-s}=k}} \\
& = 3^{-s} \(\prod_{i=1}^{s-1} w(x_i)\) \frac{w(k)f(x_s)}{f(k)} \sum_{m \geq 0} (m+s+1) \Pfrom[sz]{x_s}{\tilde{X}_m=k} \\
& = 3^{-s} \(\prod_{i=1}^{s-1} w(x_i)\) \frac{w(k)f(x_s)}{f(k)} \(\SumFWStar{x_s}{k}+ (s-1)\SumFW{x_s}{k} \).
\end{align*}
Using the expressions obtained in Lemma \ref{T: Values of SumFW} and Lemma \ref{T: Values of SumFWStar}, and the hypothesis $s < k$, this gives
\begin{gather*}
\P[sz]{X_{\infty,i}^{(k)} = x_i,\ \forall i \leq s }
= 3^{-s+1} \(\prod_{i=1}^{s-1} w(x_i)\) \frac{f(x_s)}{f(1)} \(1-\frac{C_{x_s}-s+1}{10(k+1)(k+2)} \).
\end{gather*}
Besides, for all $i \leq s-1$, we have
\begin{gather*}
\GWgeom{x_i}^+ \( \theta: B_{\theta}(s-i) = \tau_i^{\ast} \)
= \frac{1}{2 w(x_i) 12^{|\tau_i^{\ast}|}} \prod_{j: \pt{v}_j \in \tau_i^{\ast}} 2 w(y_j).
\end{gather*}
Equation \eqref{E: Distribution of RB(theta(infty,k)) (1)} can now be rewritten as
\begin{align*}
\P[sz]{R_{\infty}^{(k)}(s) = \theta^{\ast}}
& = \frac{f(x_s)}{f(1)} \(1-\frac{C_{x_s}-s+1}{10(k+1)(k+2)} \) \prod_{i=1}^{s-1} \( \frac{w(x_i)}{6 w(x_i) 12^{|\tau_i^{\ast}|}} \prod_{j: \pt{v}_j \in \tau_i^{\ast}} 2 w(y_j) \) \nonumber \\
& = \frac{f(x_s)}{f(1)} \(1-\frac{C_{x_s}-s+1}{10(k+1)(k+2)} \) \frac{2^{n^{\ast}-1}}{6^{s-1} 12^{|T^{\ast}|-s}}\prod_{j=2}^{n^{\ast}} w(y_j),
\end{align*}
hence the first result of the lemma.

To get the conditional probability \eqref{E: P(no small label above y_1 on the right)}, we have to compute
\begin{gather*}
\P[sz]{\(R_{\infty}^{(k)}(s) = \theta^{\ast}\) \cap \(\min_{\bigcup_{i \geq s} \tau_{\infty,i}^{(k)}} l_{\infty}^{(k)} > r \) }.
\end{gather*}
Using the same decomposition as above, this probability can be written as
\begin{align*}
& \sum_{m \geq s} \frac{m+1}{3^m} \(\prod_{i=1}^{s-1} w(x_i)\) \sum_{\ul{x}' \in \mathcal{M}^+_{m-s,x_s \rightarrow k}} \(\prod_{i=0}^{m-s} \GWgeom{x'_i}^+ \((T,l): \min_T l > r\) \) \\
& \qquad \qquad \qquad \qquad \qquad \qquad \quad 
\prod_{i=1}^{s-1} \GWgeom{x_i}^+ \( \theta: B_{\theta}(s-i) = \tau_i^{\ast} \),
\end{align*}
or equivalently,
\begin{gather*}
\(\sum_{m \geq 0} \frac{m+s+1}{3^m} \sum_{\ul{x}' \in \mathcal{M}^+_{m,x_s-r \rightarrow k-r}}  \prod_{i=0}^m w(x'_i) \)
\frac{2^{n^{\ast}-1}}{6^{s-1} 12^{|T^{\ast}|-s}}\prod_{j=2}^{n^{\ast}} w(y_j).
\end{gather*}
Thus, we get
\begin{align*}
& \P[sz]{\(R_{\infty}^{(k)}(s) = \theta^{\ast}\) \cap \(\min_{\bigcup_{i \geq s} \tau_{\infty,i}^{(k)}} l_{\infty}^{(k)} > r \) } \\
& \qquad \qquad
= \frac{w(k-r)f(x_s-r)}{3f(k-r)} \( \SumFWStar{x_s-r}{k-r} + (s-1)\SumFW{x_s-r}{k-r} \) \frac{2^{n^{\ast}-1}}{6^{s-1} 12^{|T^{\ast}|-s}}\prod_{j=2}^{n^{\ast}} w(y_j) \\
& \qquad \qquad
= \frac{f(x_s-r)}{f(1)} \( 1- \frac{C_{x_s-r}-s+1}{10(k-r+1)(k-r+2)} \) \frac{2^{n^{\ast}-1}}{6^{s-1} 12^{|T^{\ast}|-s}}\prod_{j=2}^{n^{\ast}} w(y_j).
\end{align*}
This completes the proof of equation \eqref{E: P(no small label above y_1 on the right)}.

Finally, for all $j^{\ast} \in \{2,\ldots,n^{\ast}\}$, we have
\begin{align*}
& \P[sz]{\(R_{\infty}^{(k)}(s) = \theta^{\ast}\) \cap \(\min_{\pt{y}_j \preceq \pt{v}} l_{\infty}^{(k)}(\pt{v}) > r \) } \\
& \qquad \qquad
= 3^{-s+1} \(\prod_{i=1}^{s-1} w(x_i)\) W(y_1,k) \prod_{i=1}^{s-1} \frac{1}{2 w(x_i) 12^{|\tau_i^{\ast}|}} \(\prod_{\substack{j: \pt{v}_j \in \tau_i^{\ast} \\ j \neq j^{\ast}}} 2 w(y_j)\) \times 2 w(y_{j^{\ast}}-r) \\
& \qquad \qquad
= W_s(y_1,k) \frac{2^{n^{\ast}-1}}{6^{s-1} 12^{|T^{\ast}|-s}} w(y_{j^{\ast}}-r) \prod_{\substack{2 \leq j \leq n^{\ast} \\ j \neq j^{\ast}}} w(y_j),
\end{align*}
hence equation \eqref{E: P(no small label above y_j, j>2)}.
\end{proof}

The second step consists in studying the vertices of $R_{\infty}^{(k)}$ which are exactly at height $s$: we give an upper bound on the expectation of the number of such vertices, and show that with high probability, for $k$ large enough, these vertices have labels greater than $s^{\alpha}$, for $\alpha \in (0,1/2)$. Precise statements are given in Lemmas \ref{T: Bound on E[nb of vertices at height s]} and \ref{T: Bound on P(theta not in A-tilde)} below. Note that for all $k$, we have
\begin{gather*}
\partial R_{\infty}^{(k)}(s):= \lbrace \pt{v} \in R_{\infty}^{(k)}: \pt{v} \mbox{ has height } s \rbrace \subset \partial \RB_{\theta_{\infty}^{(k)}} (s).
\end{gather*}

\begin{lem} \label{T: Bound on E[nb of vertices at height s]}
For all $s \geq 1$ and $k \in \N$, we have
\begin{gather*}
\E[sz]{\card{\partial R_{\infty}^{(k)}(s)}} \leq s.
\end{gather*}
\end{lem}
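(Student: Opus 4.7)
My plan is to decompose $\partial R_\infty^{(k)}(s)$ according to which grafted subtree $\tau_{\infty,i}^{(k)}$ each vertex lies in, and then bound the expected contribution of each subtree. Since the root of $\tau_{\infty,i}^{(k)}$ sits on the spine of $\theta_\infty^{(k)}$ at height $i$, a vertex of $\tau_{\infty,i}^{(k)}$ has height $s$ in $\theta_\infty^{(k)}$ if and only if it is at depth $s-i$ inside $\tau_{\infty,i}^{(k)}$, so only the indices $1 \le i \le s$ can contribute. Corollary~\ref{T: Distribution of theta(infty,k) with two marked points} tells us that, conditionally on the spine data, each $\tau_{\infty,i}^{(k)}$ is an independent labelled Galton--Watson tree with $\geom(1/2)$ offspring, conditioned to have only positive labels---that is, distributed as $\GWgeom{x}^{+}$ for a positive root label $x \ge 1$. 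By linearity and the tower property, the statement therefore reduces to the uniform bound
\begin{gather*}
\Efrom{\GWgeom{x}^{+}}{N_h} \le 1 \qquad \forall\, x \ge 1,\ h \ge 0, \tag{$\star$}
\end{gather*}
where $N_h$ denotes the number of vertices at depth $h$ in a labelled tree; summing over $i$ then yields $\E{\card{\partial R_\infty^{(k)}(s)}} \le \sum_{i=1}^{s} 1 = s$.

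The main step is to establish $(\star)$. I would apply the many-to-one (size-biased spine) formula for critical labelled Galton--Watson trees. Writing the expectation as $w(x)^{-1} \Efrom[sz]{\GWgeom{x}}{N_h \, \ind{\mbox{all labels positive}}}$, the spinal decomposition recasts it as an expectation under a spined measure in which a spine of length $h$ carries a random walk $\hat X_0 = x, \hat X_1, \ldots, \hat X_h$ with i.i.d.\ uniform $\{-1,0,1\}$-increments, the internal spine vertices have size-biased offspring $\hat\mu(k) = k(1/2)^{k+1}$, and the off-spine subtrees are independent $\GWgeom{\cdot}$-trees. The probability that all off-spine subtrees grafted at a spine vertex of label $\hat X_i$ are everywhere positive, averaged over the size-biased number of off-spine children, simplifies to exactly $w(\hat X_i)^2$ (the computation uses $\sum_{k \ge 1} k (1/2)^{k+1} q^{k-1} = w^2$ together with the fixed-point identity $w(x) = 1/(2 - q(x))$ for $q(x) := \tfrac{1}{3} \sum_\epsilon w(x+\epsilon)$, with the convention $w(z) = 0$ for $z \le 0$). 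One arrives at
\begin{gather*}
\Efrom{\GWgeom{x}^{+}}{N_h} = w(x) \cdot \E[sz]{\ind{\hat X_1,\ldots,\hat X_h \ge 1} \, w(\hat X_h) \prod_{j=1}^{h-1} w(\hat X_j)^2 \mid \hat X_0 = x},
\end{gather*}
which is at most $w(x) \le 1$ since $w \le 1$ everywhere, establishing $(\star)$.

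The delicate point is $(\star)$: the naive bound $\Efrom{\GWgeom{x}^{+}}{N_h} \le \Efrom{\GWgeom{x}}{N_h}/w(x) = 1/w(x)$ is too weak (for $x = 1$ it yields only $3/2$, giving merely $\E{\card{\partial R_\infty^{(k)}(s)}} \le 3s/2$), so the spinal factors $w(\hat X_j)^2$---which precisely compensate the $w(x)^{-1}$ prefactor---are essential to recover the sharp constant $1$. An equivalent route avoiding the spinal language is a direct induction on $h$ via the recursion $f_h(x) = w(x)^2 \cdot \tfrac{1}{3}\sum_\epsilon f_{h-1}(x+\epsilon)$ with $f_0(x) = w(x)$, which closes thanks to the algebraic identity $w(x)\,q(x) = 2w(x) - 1 \le 1$ for all $x \ge 1$.
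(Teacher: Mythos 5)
Your decomposition over the grafted subtrees $\tau_{\infty,i}^{(k)}$---reducing the lemma to the per-subtree estimate $(\star)$, that $\Efrom{\GWgeom{x}^{+}}{N_h}\le 1$ for all $x\ge 1$ and $h\ge 0$, where $N_h$ denotes the number of vertices at depth $h$---matches the paper's first step exactly, but the way you establish $(\star)$ is genuinely different, and your remark about the weakness of the naive bound pinpoints an issue in the paper's proof. The paper passes from $\E{\card{\partial B_{\tau_{\infty,i}^{(k)}}(s-i)}}$ to $\E{\frac{1}{w(X_i)}\,\E{\card{\partial B_\tau(s-i)}}}$ with $\tau$ an \emph{unconditioned} $\geom(1/2)$ Galton--Watson tree, and then asserts $\E{1/w(X_i)}\le 1$. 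Dropping the positive-label indicator yields only an upper bound, not the displayed equality, and more seriously the final inequality is false: $X_{\infty,1}^{(k)}=1$ almost surely, so $\E{1/w(X_1)}=1/w(1)=3/2>1$. As you correctly observe, this naive route gives only $\E{\card{\partial R_\infty^{(k)}(s)}}\le 3s/2$---which would still suffice where the lemma is used (Lemma \ref{T: Right-hand condition}, where only the order in $s$ matters), but is not the stated bound.

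Your spinal argument repairs and sharpens this. Keeping the positivity conditioning, the many-to-one decomposition gives
\begin{gather*}
\Efrom[sz]{\GWgeom{x}^+}{N_h} = w(x)\,\E[sz]{\ind{\hat X_1,\ldots,\hat X_h\ge 1}\,w(\hat X_h)\prod_{j=1}^{h-1}w(\hat X_j)^2 \mid \hat X_0=x},
\end{gather*}
and $w\le 1$ immediately bounds this by $w(x)\le 1$, yielding the sharp constant. The two algebraic inputs you invoke both check out: $\sum_{k\ge 1}k(1/2)^{k+1}q^{k-1}=(2-q)^{-2}$ together with the Chassaing--Durhuus fixed point $w(x)=(2-q(x))^{-1}$ give the off-spine factor $w(\hat X_j)^2$ at each internal spine vertex, and the terminal vertex contributes the single factor $w(\hat X_h)$. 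Your alternative elementary induction, $g_h(x)=w(x)^2\cdot\tfrac13\sum_\epsilon g_{h-1}(x+\epsilon)$ with $g_0=w$, closed via $w(x)q(x)=2w(x)-1\le 1$, is an equally valid route that avoids spinal language. In short, your proof is correct and delivers the lemma with the stated constant, which the paper's written proof does not.
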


\begin{proof}
For all $s \geq 1$ and $k \in \N$, we have
\begin{align*}
\E[sz]{\card{\partial R_{\infty}^{(k)}}(s)}
& = \sum_{i=1}^s \E[sz]{\card{\partial B_{\tau_{\infty,i}^{(k)}}(s-i)}} \\
& = \sum_{i=1}^s \E[sz]{ \frac{1}{w(X_i)} \E{\card{\partial B_{\tau}(s-i)}}},
\end{align*}
where $\tau$ denotes a Galton--Watson tree with offspring distribution $\geom(1/2)$. For all $h \geq 0$, we have $\E{\card{\partial B_{\tau}(h)}}=1$. As a consequence, the above equality gives
\begin{gather*}
\E[sz]{\card{\partial R_{\infty}^{(k)}}(s)}
= \sum_{i=1}^s \E[sz]{ \frac{1}{w(X_i)}}
\leq \sum_{i=1}^s 1 = s.
\end{gather*}
\end{proof}

We now consider the set
\begin{gather*}
\tilde{\mathbb{A}}_{R+} (r,s,\alpha) = \{ (T,l) \in \lstrees: \forall \pt{v} \in \partial \RB_T(s),\ l(\pt{v}) > \ent{s^{\alpha}} \}.
\end{gather*}

\begin{lem} \label{T: Bound on P(theta not in A-tilde)}
Fix $\alpha < 1/2$. For all $s$ large enough, there exists $k_1(s)$ such that for all $k \geq k_1(s)$, possibly infinite, we have
\begin{gather*}
\P[sz]{\theta_{\infty}^{(k)} \notin \tilde{\mathbb{A}}_{R+} (r,s,\alpha)} \leq \epsilon.
\end{gather*}
\end{lem}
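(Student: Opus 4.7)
My approach is a first-moment argument. By Markov's inequality,
\begin{gather*}
\P[sz]{\theta_\infty^{(k)} \notin \tilde{\mathbb{A}}_{R+}(r, s, \alpha)} \leq \E[sz]{\card{\{\pt{v} \in \partial \RB_{\theta_\infty^{(k)}}(s) : l_\infty^{(k)}(\pt{v}) \leq \ent{s^\alpha}\}}},
\end{gather*}
so it suffices to make the right-hand side at most $\epsilon$ for $s$ large and $k \geq k_1(s)$. Since $l_\infty$ is $1$-Lipschitz on $\theta_\infty$, we have $d_{\theta_\infty}(\pt{e}_0, \pt{e}_k) \geq k$, so for $k > s$ the ball of radius $s$ around $\pt{e}_k$ in $\theta_\infty^{(k)}$ does not reach $\pt{e}_0$; hence $\partial \RB_{\theta_\infty^{(k)}}(s) = \partial R_\infty^{(k)}(s)$. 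The case $k = \infty$ is handled in parallel with $\ora{\theta_\infty}$ in place of $\theta_\infty^{(k)}$.

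I decompose the count by the spine index. By Proposition~\ref{T: Distribution of theta(infty,k)}, conditionally on the spine labels $(X_{\infty,i}^{(k)})_{i \leq s}$, the subtrees $\tau_{\infty,i}^{(k)}$ are independent $\GWgeom{X_{\infty,i}^{(k)}}^+$-distributed, and each $\pt{v} \in \partial R_\infty^{(k)}(s) \setminus \{\pt{x}_{\infty,s}^{(k)}\}$ lies at depth $s-i$ of some $\tau_{\infty,i}^{(k)}$ with $i \in \{1,\ldots,s-1\}$. The contribution from the single vertex $\pt{x}_{\infty,s}^{(k)}$ is $\P[sz]{X_{\infty,s}^{(k)} \leq \ent{s^\alpha}}$, which tends to $0$ by Proposition~\ref{T: First labels convergence}, since $\alpha<1/2$ and the limiting chain $\tilde X_{s-1}$ is of order $\sqrt s$. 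For the remaining contributions I introduce the good event
\begin{gather*}
\mathcal{G} = \{X_{\infty,i}^{(k)} \geq \delta \sqrt{s}, \forall i \in [\eta s, s]\},
\end{gather*}
which, for $\eta, \delta>0$ chosen appropriately, has probability $\geq 1 - \epsilon/3$ for $s$ large and $k \geq k_1(s)$; this follows from Lemma~\ref{T: Lower bound on the spine labels} applied to $\ora{\theta_\infty}$ and transferred to finite $k$ via Theorem~\ref{T: Joint cv of the rerooted trees} together with Skorokhod's representation.

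On $\mathcal{G}$, the analysis splits into two regimes. For $i \in [\eta s, s-1]$ the root label $X_{\infty,i}^{(k)} \geq \delta\sqrt{s}$ is much larger than $\ent{s^\alpha}$ (as $\alpha<1/2$), and the many-to-one formula for $\GWgeom{x}^+$ combined with Hoeffding/ballot-type bounds for the label random walk of length $s-i$ yields a per-subtree bound that sums to $o(1)$ as $s \to \infty$. For $i \in [1, \eta s)$ the subtree depth $h = s - i$ is of order $s$ while the root label $X_{\infty,i}^{(k)}$ may be small (of order $\sqrt{i}$, and bounded a.s.\ by $C\sqrt{i \log i}$ by Lemma~\ref{T: Upper bound on the spine labels}); here I use the spinal decomposition of $\GWgeom{x}^+$-trees, where each spine vertex carries size-biased offspring and each non-spine sibling generates an independent subtree constrained to have positive labels, which produces a multiplicative damping $\prod_{j=0}^{h-1} g(\bar w(l_j))$ (with $g(u) = 1/(2-u)^2$ and $\bar w(l) = (w(l-1)+w(l)+w(l+1))/3$) along the spine. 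Combined with the Kemperman/ballot bound for random walks conditioned to stay positive, this makes the expected count per subtree polynomially small in $h$, so that the sum over $i < \eta s$ is $o(1)$ and can be made $\leq \epsilon/3$ by choosing $\eta$ small and $s$ large. Adding the three contributions concludes the proof.

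The main obstacle is obtaining a sharp enough estimate for the expected count of low-label vertices at large depth in a $\GWgeom{x}^+$-tree of small root label: the walk-based bound alone is insufficient, and the spinal-positivity damping must be quantified through the size-biased decomposition. A secondary issue is the uniform-in-$k$ transfer of Lemma~\ref{T: Lower bound on the spine labels} from $\ora{\theta_\infty}$ to $\theta_\infty^{(k)}$, which is carried out via Theorem~\ref{T: Joint cv of the rerooted trees} and Skorokhod's representation.
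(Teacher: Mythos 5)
Your overall strategy --- a first-moment/union bound over spine indices, with a split between a near-root regime $i < \eta s$ and a far regime $i \geq \eta s$ --- mirrors the paper's structure, and your treatment of the far regime (lower bound $X_{\infty,i}^{(k)} \geq \delta\sqrt s$ from Lemma~\ref{T: Lower bound on the spine labels}, then a per-subtree contribution of order $s^{\alpha - 3/2}$ summing to $O(s^{\alpha-1/2})$) is essentially the paper's argument. However, there is a genuine gap precisely where you flag the ``main obstacle.'' For $i < \eta s$ you propose to show that the expected number of low-label vertices at depth $s-i$ in a $\GWgeom{x}^+$-tree with small root label $x = O(\sqrt{i \log i})$ is polynomially small in $s-i$, via a spinal decomposition with a ``multiplicative damping'' factor and Kemperman-type bounds. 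This is asserted, not proved, and it is not clear the damping is strong enough: the raw expectation $\mathbb{E}[\card{\partial B_\tau(s-i)}]$ for a critical $\GWgeom{x}$ tree equals $1$, so the entire decay must come from the positivity conditioning, and the summability over $i < \eta s$ is exactly the quantitative question you leave open.

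The paper sidesteps this entirely. It first reduces to $k = \infty$ by a Portmanteau argument: the event $\{\theta \notin \tilde{\mathbb{A}}_{R+}(r,s,\alpha)\}$ is determined by a finite ball, hence both open and closed, so weak convergence of $\theta_\infty^{(k)}$ towards $\ora{\theta_\infty}$ gives $\limsup_k \mathbb{P}(\theta_\infty^{(k)} \notin \tilde{\mathbb{A}}_{R+}(r,s,\alpha)) \leq \mathbb{P}(\ora{\theta_\infty} \notin \tilde{\mathbb{A}}_{R+}(r,s,\alpha))$; this also makes the Skorokhod transfer you invoke unnecessary. It then quotes M\'enard's Lemma~5 to obtain $\mathbb{P}(\exists\, i \leq \ent{\eta s}-1 : R_i(\ora{\theta_\infty}) \cap \partial B_{\ora{\theta_\infty}}(s) \neq \emptyset) \leq 4\eta$: with high probability the near-root subtrees simply do not reach height $s$, so no estimate on their labels is needed. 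This replaces your hard expected-count estimate by a survival-probability bound, and it is at exactly this point that your proof stops short.
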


\begin{proof}
First note that since $\lstrees \setminus \tilde{\mathbb{A}}_{R+} (r,s,\alpha)$ is a closed set, we have
\begin{gather*}
\limsup_{k \rightarrow \infty} \P[sz]{\theta_{\infty}^{(k)} \notin \tilde{\mathbb{A}}_{R+} (r,s,\alpha)}
\leq \P[sz]{\ora{\theta_{\infty}} \notin \tilde{\mathbb{A}}_{R+} (r,s,\alpha)},
\end{gather*}
so it is enough to show that the property holds for $k = \infty$. Moreover, the same arguments as in the proof of \cite[Lemma 5]{Men} show that for all $\eta \in (0,1/2)$, for all $s$ large enough, we have
\begin{gather*}
\P[sz]{\exists i \leq \ent{\eta s}-1: R_i(\ora{\theta_{\infty}}) \cap \partial B_{\ora{\theta_{\infty}}}(s) \neq \emptyset } \leq 4 \eta.
\end{gather*}
Thus, letting $I_{\eta}(s) = \{\ent{\eta s}, \ldots, s\}$, we have
\begin{gather*}
\P[sz]{\ora{\theta_{\infty}} \notin \tilde{\mathbb{A}}_{R+} (r,s,\alpha)}
\leq 4 \eta + \P[sz]{\exists i \in I_{\eta}(s): \min_{R_i(\ora{\theta_{\infty}})} \ora{l_{\infty}} \leq \ent{s^{\alpha}}}.
\end{gather*}
Lemma \ref{T: Lower bound on the spine labels} now ensures that for $\delta > 0$ and $s$ large enough, this probability is less than
\begin{gather} \label{E: Partial bound on P(theta not in A-tilde)}
5 \eta + \P[sz]{\( \exists i \in I_{\eta}(s): \min_{R_i(\ora{\theta_{\infty}})} \ora{l_{\infty}} \leq \ent{s^{\alpha}}\) \cap \(\forall i \in I_{\eta}(s), S_i(\ora{\theta_{\infty}}) \geq \ent{\delta \sqrt{s}}\)}.
\end{gather}
For all $(x_i)_{\ent{\eta s} \leq i \leq s}$, we have
\begin{align*}
& \P[sz]{ \exists i \in I_{\eta}(s): \min_{R_i(\ora{\theta_{\infty}})} \ora{l_{\infty}} \leq \ent{s^{\alpha}} \bigg| S_i(\ora{\theta_{\infty}}) = x_i\ \forall i \in I_{\eta}(s)} \\
& \qquad \qquad \qquad \qquad \qquad \qquad \qquad \qquad
\leq \sum_{i=\ent{\eta s}}^s \P[sz]{\min_{R_i(\ora{\theta_{\infty}})} \ora{l_{\infty}} \leq \ent{s^{\alpha}} \bigg| S_i(\ora{\theta_{\infty}}) = x_i} \\
& \qquad \qquad \qquad \qquad \qquad \qquad \qquad \qquad
\leq \sum_{i=\ent{\eta s}}^s \GWgeom{x_i}^+ \( (T,l) \in \ltrees{}^+: \min_T l \leq \ent{s^{\alpha}} \) \\
& \qquad \qquad \qquad \qquad \qquad \qquad \qquad \qquad
\leq \sum_{i=\ent{\eta s}}^s \frac{w(x_i)-w(x_i-\ent{s^{\alpha}})}{w(x_i)}.
\end{align*}
Furthermore, if we choose the integers $x_i$ in such a way that $x_i \geq \ent{\delta \sqrt{s}}$ for all $i$, we have
\begin{gather*}
\frac{w(x_i)-w(x_i-\ent{s^{\alpha}})}{w(x_i)} = \frac{4 \ent{s^{\alpha}}}{x_i^3} + o\(\frac{s^{\alpha}}{x_i^3}\) \leq \frac{4 s^{\alpha-3/2}}{\delta^3} + o\(s^{\alpha-3/2}\),
\end{gather*}
so
\begin{gather*}
\P[sz]{ \exists i \in I_{\eta}(s): \min_{R_i(\ora{\theta_{\infty}})} \ora{l_{\infty}} \leq \ent{s^{\alpha}} \bigg| S_i(\ora{\theta_{\infty}}) = x_i\ \forall i \in I_{\eta}(s)}
\leq \frac{4 s^{\alpha-1/2}}{\delta^3} + o\(s^{\alpha-1/2}\).
\end{gather*}
Putting this together with \eqref{E: Partial bound on P(theta not in A-tilde)}, we finally get
\begin{gather*}
\P[sz]{\ora{\theta_{\infty}} \notin \tilde{\mathbb{A}}_{R+} (r,s,\alpha)}
\leq 5 \eta + \frac{4 s^{\alpha-1/2}}{\delta^3} + o\(s^{\alpha-1/2}\) \leq 6 \eta
\end{gather*}
for all $s$ large enough.
\end{proof}

We are now ready to give the proof of Lemma \ref{T: Right-hand condition}.

\begin{proof}[Proof of Lemma \ref{T: Right-hand condition}]
Fix $\alpha \in (1/3,1/2)$. Lemma \ref{T: Bound on P(theta not in A-tilde)} show that for all $s$ large enough and $k \geq k_1(s)$ (possibly infinite), we have
\begin{gather*}
\P[sz]{\ol{\mathcal{A}_{R+}^{(k)} (r,s)}}
\leq 2 \epsilon + \P[sz]{\ol{\mathcal{A}_{R+}^{(k)} (r,s)} \cap \( \theta_{\infty}^{(k)} \in \tilde{\mathbb{A}}_{R+} (r,s,\alpha)\) }.
\end{gather*}
Letting 
\begin{gather*}
\Theta(s,\alpha) = \{ (T,l) \in \ltrees{[s]}^R: \min_{\partial B_T(s)} l > \ent{s^{\alpha}}  \},
\end{gather*}
we get that
\begin{gather} \label{E: Bound on P(theta(k) not in A-R(k)) (1)}
\P[sz]{\ol{\mathcal{A}_{R+}^{(k)} (r,s)}}
\leq 2 \epsilon + \sum_{\theta^{\ast} \in \Theta(s,\alpha)} \P[sz]{\(R_{\infty}^{(k)}(s) = \theta^{\ast}\) \cap \ol{\mathcal{A}_{R+}^{(k)} (r,s)}}.
\end{gather}
Fix $\theta^{\ast} \in \Theta_{\epsilon}(s,\alpha)$, and let $y_1,\ldots,y_{n^{\ast}}$ denote the labels of the vertices of height $s$ (from left to right) in $\theta^{\ast}$. Note that the condition $\theta^{\ast} \in \Theta_{\epsilon}(s,\alpha)$ means that we have $\ent{s^{\alpha}} < y_i \leq s$ for all $i \in \{1,\ldots,n^{\ast}\}$. Moreover, equations \eqref{E: P(no small label above y_1 on the right)} and \eqref{E: P(no small label above y_j, j>2)} show that
\begin{align*}
\P[sz]{\ol{\mathcal{A}_{R+}^{(k)} (r,s)}\ \bigg|\ R_{\infty}^{(k)}(s) = \theta^{\ast}}
\leq 1-\frac{W_s(y_1-r,k-r)}{W_s(y_1,k)} + \sum_{j=2}^{n^{\ast}} \( 1-\frac{w(y_j-r)}{w(y_j)} \).
\end{align*}
For all $y \leq s$, we have
\begin{gather*}
\frac{W_s(y-r,k-r)}{W_s(y,k)}
= \frac{f(y-r)}{f(y)} \(1+ \frac{\frac{C_{y-r}-s+1}{10(k-r+1)(k-r+2)} - \frac{C_y-s+1}{10(k+1)(k+2)}}{1 - \frac{C_y-s+1}{10(k+1)(k+2)}}\)
\geq \frac{f(y-r)}{f(y)},
\end{gather*}
so 
\begin{gather*}
0 \leq 1-\frac{W_s(y-r,k-r)}{W_s(y,k)} \leq 1-\frac{f(y-r)}{f(y)} \leq \epsilon
\end{gather*}
for all $s$ large enough and $y \in \{\ent{s^{\alpha}},\ldots,s\}$. Besides, uniformly in $y > \ent{s^{\alpha}}$, we have
\begin{gather*}
1-\frac{w(y-r)}{w(y)} \leq \frac{4r}{s^{3\alpha}} + o\(\frac{r}{s^{3\alpha}}\).
\end{gather*}
This yields
\begin{gather*}
\P[sz]{\ol{\mathcal{A}_{R+}^{(k)} (r,s)}\ \bigg|\ R_{\infty}^{(k)}(s) = \theta^{\ast}}
\leq \epsilon + n^{\ast} \( \frac{4r}{s^{3\alpha}} + o\(\frac{r}{s^{3\alpha}}\) \).
\end{gather*}
Putting this into \eqref{E: Bound on P(theta(k) not in A-R(k)) (1)}, we obtain
\begin{align*}
\P[sz]{\ol{\mathcal{A}_{R+}^{(k)} (r,s)}}
& \leq 3 \epsilon + \( \frac{4r}{s^{3\alpha}} + o\(\frac{r}{s^{3\alpha}}\) \) \sum_{\theta^{\ast} \in \Theta(s,\alpha)} \card{\partial B_{\theta^{\ast}}(s)} \P[sz]{R_{\infty}^{(k)}(s) = \theta^{\ast}} \\
& = 3 \epsilon + \( \frac{4r}{s^{3\alpha}} + o\(\frac{r}{s^{3\alpha}}\) \) \E[sz]{\card{\partial R_{\infty}^{(k)}(s)}}.
\end{align*}
Lemma \ref{T: Bound on E[nb of vertices at height s]} now implies that
\begin{gather*}
\P[sz]{\ol{\mathcal{A}_{R+}^{(k)} (r,s)}}
\leq 3 \epsilon + \frac{4r}{s^{3\alpha-1}} + o\(\frac{r}{s^{3\alpha-1}}\).
\end{gather*}
Since we took $\alpha > 1/3$, this concludes the proof.
\end{proof}

\subsection{Proof of Proposition \ref{T: Ball inclusions}} \label{S: Conclusion}

We can now prove Proposition \ref{T: Ball inclusions} by putting together the results of Lemmas \ref{T: Left-hand condition}, \ref{T: Right-hand condition} and \ref{T: LR conditions for k<infty}, and using the symmetry between the definitions of $\theta_{\infty}^{(k)}$ and $\theta_{\infty}^{(-k)}$.

\begin{proof}[Proof of Proposition \ref{T: Ball inclusions}]
Let $r \in \N$. For all $\epsilon \geq 0$, we consider the sequences $(s_{\epsilon}(r'))_{r' \geq 0}$ and $(h_{\epsilon}(r'))_{r' \geq 1}$ defined by $s_{\epsilon}(0)=0$, and for all $r' \geq 1$:
\begin{gather*}
s_{\epsilon}(r')=s_R(r',2^{-r'-1} \epsilon) \vee s_L(r',s_{\epsilon}(r'-1), 2^{-r'-1}\epsilon) \\
h_{\epsilon}(r')=h_L(s_{\epsilon}(r'),2^{-r'-1}\epsilon),
\end{gather*}
where $s_L$, $s_R$ and $h_R$ are the quantities introduced in Lemmas \ref{T: Left-hand condition} and \ref{T: Right-hand condition}. Note that for all $r'$, we have $\mathcal{A}_{R+}(r',s_{\epsilon}(r')) \subset \mathcal{A}_{R+}(r',s_R(r',2^{-r'-1} \epsilon))$. Thus, Lemmas \ref{T: Left-hand condition} and \ref{T: Right-hand condition} show that for all $r' \in \N$, for all $k$ large enough, we have
\begin{gather*}
\P[sz]{\(\theta_{\infty}^{(k)} \notin \mathbb{A}_L(r',s_{\epsilon}(r'-1),s_{\epsilon}(r'),h_{\epsilon}(r')) \) \cup \ol{\mathcal{A}_{R+}^{(k)}(r',s_{\epsilon}(r'))} } \leq 2^{-r'} \epsilon,
\end{gather*}
and as a consequence,
\begin{gather*}
\P[sz]{\bigcup_{r'=1}^r \(\theta_{\infty}^{(k)} \notin \mathbb{A}_L(r',s_{\epsilon}(r'-1),s_{\epsilon}(r'),h_{\epsilon}(r')) \) \cup \ol{\mathcal{A}_{R+}^{(k)}(r',s_{\epsilon}(r'))} } \leq \epsilon
\end{gather*}
for all $k$ large enough. Moreover, recalling the notation of Proposition \ref{T: Distribution of theta(infty,k)}, we have
\begin{gather*}
\spine_{s_{\epsilon}(r)+1}(\theta_{\infty}^{(k)}) \nprec \pt{e}_0(\theta_{\infty})
\end{gather*}
if and only if $I_{\infty}^{(k)} \leq s_{\epsilon}(r)$, which happens with probability at most $s_{\epsilon}(r)/(k+1)$. Therefore, for all $k$ large enough, the conditions stated in the first part of Lemma \ref{T: LR conditions for k<infty} hold with probability at least $1-2\epsilon$.

Finally, we can see from the symmetry between the definitions of $\theta_{\infty}^{(k)}$ and $\theta_{\infty}^{(-k)}$ that for all $r',s,s',h \in \N$, we have
\begin{gather*}
\P[sz]{\theta_{\infty}^{(-k+1)} \notin \mathbb{A}_R(r',s,s',h)} = \P[sz]{\theta_{\infty}^{(k-1)} \notin \mathbb{A}_L(r'+1,s,s',h)}
\end{gather*}
and
\begin{gather*}
\P[sz]{\ol{\mathcal{A}_{L+}^{(-k+1)}(r',s)}} = \P[sz]{\ol{\mathcal{A}_{R+}^{(k-1)}(r'-1,s)}}.
\end{gather*}
Thus, letting $\tilde{s}_{\epsilon}(0)=0$ and, for all $r' \geq 1$,
\begin{gather*}
\tilde{s}_{\epsilon}(r')=s_R(r'-1,2^{-r'-1} \epsilon) \vee s_L(r'+1,\tilde{s}_{\epsilon}(r'-1), 2^{-r'-1}\epsilon) \\
\tilde{h}_{\epsilon}(r')=h_L(\tilde{s}_{\epsilon}(r'),2^{-r'-1}\epsilon),
\end{gather*}
the probability
\begin{gather*}
\P[sz]{\bigcap_{r'=1}^r \(\theta_{\infty}^{(-k+1)} \notin \mathbb{A}_R(r',\tilde{s}_{\epsilon}(r'-1),\tilde{s}_{\epsilon}(r'),\tilde{h}_{\epsilon}(r')) \) \cap \mathcal{A}_{L+}^{(-k+1)}(r',\tilde{s}_{\epsilon}(r')) }
\end{gather*}
is equal to
\begin{gather*}
\P[sz]{\bigcap_{r'=1}^r \(\theta_{\infty}^{(k-1)} \notin \mathbb{A}_L(r'+1,\tilde{s}_{\epsilon}(r'-1),\tilde{s}_{\epsilon}(r'),\tilde{h}_{\epsilon}(r')) \) \cap \mathcal{A}_{R+}^{(k-1)}(r'-1,\tilde{s}_{\epsilon}(r')) } \leq \epsilon.
\end{gather*}
Similarly as above, this implies that the conditions stated in the second part of Lemma \ref{T: LR conditions for k<infty} hold with probability at least $1-2\epsilon$.

Therefore, Lemma \ref{T: LR conditions for k<infty} shows that for $h = h_{\epsilon}(r) \vee \tilde{h}_{\epsilon}(r)$, the inclusions \eqref{E: R+ ball inclusion} and \eqref{E: L+ ball inclusion} hold with probability at least $1-4\epsilon$, for all $k$ large enough.
\end{proof}

\bibliographystyle{plain}
\bibliography{../Biblio/Bibliographie}

\begin{thebibliography}{10}

\bibitem{Ang_Peeling}
Omer Angel.
\newblock Growth and percolation on the uniform infinite planar triangulation.
\newblock {\em Geometric \& Functional Analysis}, 13(5):935--974, 2003.

\bibitem{Ang_UIHPT}
Omer Angel.
\newblock Scaling of percolation on infinite planar maps.
\newblock 2005.
\newblock preprint.

\bibitem{AngSchr}
Omer Angel and Oded Schramm.
\newblock Uniform infinite planar triangulations.
\newblock {\em Communications in Mathematical Physics}, 241(2-3):191--213,
  October 2003.

\bibitem{ChaDu}
Philippe Chassaing and Bergfinnur Durhuus.
\newblock Local limit of labeled trees and expected volume growth in a random
  quadrangulation.
\newblock {\em The Annals of Probability}, 34(3):879--917, 2006.

\bibitem{ChaSch}
Philippe Chassaing and Gilles Schaeffer.
\newblock Random planar lattices and integrated superbrownian excursion.
\newblock {\em Probability Theory and Related Fields}, (128):161--212, 2004.

\bibitem{CV}
Robert Cori and Bernard Vauquelin.
\newblock Planar maps are well labeled trees.
\newblock {\em Canadian Journal of Mathematics}, 33(5):1023--1042, 1981.

\bibitem{CuLG}
Nicolas Curien and Jean-François Le~Gall.
\newblock The {B}rownian plane.
\newblock {\em Journal of Theoretical Probability}, 27(4):1249--1291, 2014.

\bibitem{CuMi}
Nicolas Curien and Grégory Miermont.
\newblock Uniform infinite planar quadrangulations with a boundary.
\newblock {\em Random Structures and Algorithms}.
\newblock to appear.

\bibitem{CuMenMi}
Nicolas Curien, Laurent Ménard, and Grégory Miermont.
\newblock A view from infinity of the infinite planar quadrangulation.
\newblock 2012.

\bibitem{Kes}
Harry Kesten.
\newblock Subdiffusive behavior of random walk on a random cluster.
\newblock {\em Annales de l'I.H.P., section B}, 22(4):425--487, 1986.

\bibitem{Kri}
Maxim Krikun.
\newblock Local structure of random quadrangulations.
\newblock 2006.
\newblock http://arxiv.org/abs/math/0512304.

\bibitem{Lamp}
John Lamperti.
\newblock A new class of probability limit theorems.
\newblock {\em Bulletin of the American Mathematical Society}, 67(3):267--269,
  1961.

\bibitem{Men}
Laurent Ménard.
\newblock The two uniform infinite quadrangulations of the plane have the same
  law.
\newblock {\em Annales de l'I.H.P - Probabilités et Statistiques},
  46(1):190--208, 2010.

\bibitem{Pit_CSP}
Jim Pitman.
\newblock {\em Combinatorial stochastic processes}.
\newblock Springer-Verlag, 2006.

\bibitem{Sch_Thesis}
Gilles Schaeffer.
\newblock Conjugaison d'arbres et cartes planaires aléatoires.
\newblock 1998.
\newblock P.h.D. thesis.

\end{thebibliography}

\end{document}